\renewcommand{\epsilon}{\varepsilon}
\renewcommand{\phi}{\varphi}
 \newcommand{\bZ}{\mathbb{Z}}
\newcommand{\bR}{\mathbb{R}} \newcommand{\bN}{\mathbb{N}}
\newcommand{\dS}{\mathcal{S}(\bR^d)} \setcounter{tocdepth}{2}
\newcommand{\cU}{\mathcal{U}}
\newcommand{\ddS}{\mathcal{S}'(\bR^d)}
\newcommand{\supp}{\operatorname{supp}}
\newcommand{\brac}[1]{\langle #1\rangle}
\newtheorem{theorem}{Theorem}[section]
\newtheorem{lemma}[theorem]{Lemma}
\newtheorem{proposition}[theorem]{Proposition}
\newtheorem{prop}[theorem]{Proposition}
\newtheorem{corollary}[theorem]{Corollary}
\theoremstyle{definition}
\newtheorem{definition}[theorem]{Definition}
\newtheorem{example}[theorem]{Example}
\theoremstyle{remark}
\newtheorem{remark}[theorem]{Remark}
\numberwithin{equation}{section}
\renewcommand{\d}{\,\text{d}}
\def\dx{{\,\operatorname{d}\!x}}
\def\dy{{\,\operatorname{d}\!y}}
\def\dt{{\,\operatorname{d}\!t}}
\def\du{{\,\operatorname{d}\!u}}
\def\bZ{{\mathbb Z}}
\def\bN{{\mathbb N}}
\def\bC{{\mathbb C}}
\def\bR{{\mathbb R}}
\def\bT{{\mathbb T}}
\def\cB{\mathcal{B}}
\def\cC{\mathcal{C}}
\def\cD{{\mathbb{R}^d}}
\def\cU{\mathcal{U}}
\newcommand{\C}{\mathbb{C}}
\newcommand{\R}{\mathbb{R}}
\newcommand{\vf}{\mathbf{f}}
\newcommand{\vg}{\mathbf{g}}
\newcommand{\vx}{\mathbf{x}}
\newcommand{\vc}{\mathbf{c}}
\newcommand{\vd}{\mathbf{d}}
\newcommand{\ve}{\mathbf{e}}
\newcommand{\vs}{\mathbf{s}}
\newcommand{\vw}{\mathbf{w}}
\providecommand{\abs}[2][]{#1\lvert#2#1\rvert}
\providecommand{\norm}[2][]{#1\lVert#2#1\rVert}
\newcommand{\esssup}{\mathop{\rm ess{\,}sup}}
\def\cprime{$'$} 
\begin{document}%\nocite{*}
\title[Matrix $A_p$-weights relative to a pseudo-metric]{ Matrix $A_p$-weights relative to a pseudo-metric}
\author[M.\ Nielsen]{Morten Nielsen}
\thanks{This work was supported by the Independent Research Fund Denmark, grant no.\ 5281-00046B}
\address{Department of Mathematical Sciences\\ Aalborg
  University\\ Thomas Manns Vej 23\\ DK-9220 Aalborg East\\ Denmark}
\email{mnielsen@math.aau.dk}
\subjclass[2020]{Primary 	42B15,   42B35, 	46E36; Secondary 
46E40}
\begin{abstract}
 Matrix weights satisfying a Muckenhoupt $A_p$-condition relative to a family of anisotropic balls in $\bR^d$ defined by a pseudo-metric are studied. It is shown that such matrix weights satisfy a doubling condition and a reverse H\"older inequality. In the special case, where the pseudo-metric is homogeneous with respect to a one-parameter dilation group, the corresponding Muckenhoupt class is shown to satisfy an invariance property under composition with affine transformations generated by the dilation group. A general sampling theorem is derived for the matrix-weighted space $L^p(W)$ for Muckenhoupt $A_p$ weights $W$ along with a corresponding multiplier result for $L^p(W)$. An application of the results to the study of anisotropic matrix-weighted Besov spaces is considered.    
\end{abstract}
\keywords{Matrix weighted space, Muckenhoupt condition, pseudo-metric, geometric invariance, anisotropic space, sampling theorem, band-limited multiplier operator, Besov space} 
\maketitle

\section{Introduction}
Weights satisfying a Muckenhoupt $A_p$-condition play an important role in weighted norm theory for the study of maximal operators and various singular integral operators on weighted $L^p$ spaces. For example, the celebrated Muckenhoupt-Hunt-Wheeden theorem \cite{HuntMuckWhee:1973a}, stated in the setting of $\bR^d$, shows that the Riesz transform(s) on $\R^d$ are bounded on weighted $L^p(\bR^d;w) $ precisely when the weight $w$ satisfies the Muckenhoupt $A_p$-condition, i.e.,  
\begin{equation}\label{eq:APclassical}
[w]_{A_p}:=\sup_{B\in\cB}\frac{1}{|B|}\int_B w(x)\dx\cdot\bigg[\frac{1}{|B|}\int_B w^{-\frac{1}{p-1}}(x)\dx\bigg]^{p-1}<\infty,
\end{equation}
where $\cB$ is the collection of all Euclidean balls in $\bR^d$.

More recently there has been significant focus on extending weighted norm theory to the setting of vector-valued functions with weights provided by matrix-valued functions that are positive definite almost everywhere. 
A highlight 
in the matrix-weighted case is the formulation of a suitable matrix $A_p$-condition by Nazarov, Treil and Volberg that completely characterizes boundedness of the Riesz transform(s) on matrix-weighted $L^p$ on $\bR^d$ for $1<p<\infty$, see \cite{Volb:1997a,TreiVolb:1997a,NazaTrei:1996a}. Applications of the matrix $A_p$-conditions to the study of vector-valued function spaces and associated operators have also generated significant recent interest, see, e.g., \cite{Roud:2003a,FrazRoud:2004a, Gold:2003a,Niel:2025a,CruzMoenRodn:2016a,IsraKwonPott:2017a,BuHytoYang:2025a} and references therein.

 The definition of the $A_p$-condition for $d>1$ suggests that the condition must be tied to the specific geometry defined by the sets in $\cB$. The focus in the present paper is on a scenario where we replace $\cB$ by other structured notions of ``balls'' in the context of matrix weights.  

For scalar weights, extensions of the Muckenhoupt condition to covering systems with other geometric characteristics than Euclidean balls have a long history. In a study that will be central for our results, Calder\'on \cite{Cald:1976a} considered a specific family of generalized scalar Muckenhoupt weights, where Euclidean balls are replaced by balls generated by a pseudo-metric in \eqref{eq:APclassical}, see also \cite{Kurt:1975a}. Calder\'on's work also clarifies that changing the structure of $\cB$ may introduce non-trivial mathematical challenges that often require new techniques.

A systematic study of the interplay between the geometry of the covering system, scalar weights, and boundedness properties of corresponding maximal operators was conducted by Jawerth \cite{Jawe:1986a}, see also \cite{Pere:1991b}. For matrix weights much less is known about Muckenhoupt conditions based on other geometries. A first study of the Muckenhoupt condition for matrix weights relative to certain general structured coverings of $\bT^d$ was conducted by \v{S}iki\'c and the author in \cite{NielSiki:2025a}.

It is clearly of interest to clarify the possible benefits, if any, that can be obtained by changing $\cB$ in \eqref{eq:APclassical}, or in the corresponding definition of the Muckenhoupt $A_p$-condition for matrix weights, to some other (structured) collection of geometric objects. In this article, we will explore one specific benefit: the ability to modify invariance properties of the weights satisfying the Muckenhoupt condition. Let us be more specific; the Euclidean balls $\cB$ are clearly invariant under any affine transformation of the type $T_r:=r\cdot+c$, with $r>0$ a scalar and $c\in\bR^d$, which consequently makes the corresponding Muckenhoupt weights invariant under composition with affine maps of this particular type. 
However, the transformations $T_r$ are all isotropic in nature, and the invariance breaks down if we compose a standard Muckenhoupt weight with some other affine transformation $A\cdot+c$, where $A$ is an invertible $d\times d$-matrix that happens to be ``anisotropic''.

In the present paper, we will obtain invariance of the Muckenhoupt condition for matrix weights under certain families of anisotropic affine transformations by modifying the geometric structure of the balls in $\cB$ accordingly. 

The paper is structured as follows. In Section \ref{s:a} we define the Muckenhoupt condition for scalar and matrix weights for very general coverings $\cB$ with almost no restrictions on the geometry of the sets in $\cB$. A connection between matrix weights and scalar weights defined by the same family $\cB$ is made.

In Section \ref{sec:metr} we impose more structure on $\cB$ and require that $\cB$ is the collection of balls generated by a pseudo-metric, similar to the setting considered by Calder\'on \cite{Cald:1976a}. We show that matrix weights satisfying a Muckenhoupt condition also satisfy certain doubling conditions and a reverse H\"older inequality.

We then continue the gradual refinement of the structure of $\cB$ in Section \ref{sec:HS} and consider a subclass of pseudo-metrics that are homogeneous with respect to a one-parameter group of (possibly anisotropic) dilations. Our main contribution is to show invariance of the corresponding Muckenhoupt condition under composition of the matrix weight with affine transformations generated by the dilation group.

We present two specific results benefiting directly from such a modified invariance in Section \ref{sec:HS}. We prove a boundedness result for band-limited Fourier multipliers on the matrix-weighted space $L^p(W)$ for weights satisfying the mentioned modified Muckenhoupt condition, where the support set of the multiplier is defined by an affine transformation of a fixed compact set. 
We also prove a corresponding (nonuniform) sampling theorem for band-limited functions in this anisotropic setting.  

Finally, in Section \ref{sec:B}, we introduce the notion of matrix-weighted Besov spaces in the anisotropic setting studied in Section \ref{sec:HS}. The anisotropic matrix-weighted Besov spaces extend the construction of Besov spaces in the isotropic setting considered by Roudenko \cite{Roud:2003a} and Frazier and Roudenko \cite{FrazRoud:2004a}. A discrete characterization of vector functions in the Besov spaces is derived using the multiplier and sampling results from Section \ref{sec:HS}.  To the knowledge of the author, this construction of weighted spaces in the anisotropic setting is new even in the scalar case.  

\section{Structured bases and corresponding Muckenhoupt conditions}\label{s:a}

 In this section we consider general covering systems $\cB$ forming so-called bases for $\bR^d$ and define Muckenhoupt conditions for both scalar and matrix weights relative to such coverings.

\begin{definition}\label{d:1}
A basis for $\bR^d$, $d\in \bN$, is any collection $\cB$ of measurable, bounded subsets of $\bR^d$ with non-empty interior. A structured basis for $\bR^d$ is a basis such that there exists a sequence $E_j\in \cB$, $j\in\bN$, with $\cup_j \mathring{E}_j=\bR^d$ and $E_{j}\subseteq E_{j+1}$, $j\in \bN$.  
\end{definition}
For any structured basis $\cB$, we can now define the various Muckenhoupt conditions, where we first consider the scalar case.

Let $\cB$ be a structured basis for $\cD$, $d\in \bN$.
A (measurable) scalar weight $w:\cD\rightarrow (0,\infty)$ is said to satisfy the
 Muckenhoupt $A_p$-condition, $1<p<\infty$, provided 
 \begin{equation}\label{eq:AP}
[w]_{A_p(\cB)}:=\sup_{E\in\cB}\fint_E w(x)\,\d x\cdot\bigg[\fint_E w^{-\frac{1}{p-1}}(x)\,\d x\bigg]^{p-1}<\infty,
\end{equation}
where for any measurable subset $E\subset \bR^d$ of positive measure, 
%$$f_E:=\frac{1}{|E|}\int_E f(x)\,\d x,$$
we use the notation
$$\fint_E f(x)\,\d x:=\frac{1}{|E|}\int_E f(x)\,\d x\quad\text{and}\quad f(E):=\int_E f(x)\,\d x.$$
For $p=1$ we say that $w:\cD\rightarrow (0,\infty)$ satisfies the
 Muckenhoupt $A_1$-condition provided
\begin{equation}\label{eq:A1}
[w]_{A_1(\cB)}:=\sup_{E\in\cB}\fint_E w(x)\,\d x\cdot \esssup_{x\in E} w^{-1}(x)<\infty.
 \end{equation}
We denote the class of such Muckenhoupt weights by $A_p(\cB)$.
Even though the scalar
$A_p$-conditions are quite involved, they are still very much operational since quite large
classes of, e.g., polynomial weights can be shown to satisfy the
respective conditions for suitable $\cB_\rho$, see Lemma \ref{le:poly} of Section \ref{sec:HS}.

Next, we consider a matrix-valued function $W\colon \cD\rightarrow \bC^{N\times N}$, which is measurable
and strictly positive definite almost everywhere. We will refer to such a function as a \textit{matrix weight}. Inspired by the  definition of Muckenhoupt $A_p$-conditions for matrix weights relative to standard Euclidean balls considered in \cite{Roud:2003a,FrazRoud:2004a}, which were based on the Muckenhoupt conditions first derived in \cite{TreiVolb:1997a,Volb:1997a,NazaTrei:1996a}, we follow \cite{NielSiki:2025a} and give the following definition of the Muckenhoupt $A_p$-condition for matrix weights.

\begin{definition}
Suppose $W\colon \cD\rightarrow \C^{N\times N}$ is a matrix weight and let $0<p<\infty$.
\begin{itemize}
    \item 
In the case $0<p\leq 1$, $W$ is said to satisfy the matrix Muckenhoupt $A_p$-condition relative to $\cB$ provided 
\begin{equation}\label{eq:mA1}
	[W]_{{\mathbf{A}_p(\cB)}}:=\sup_{E\in\cB} \esssup_{t\in E} \fint_E \|W^{1/p}(x)W^{-1/p}(t)\|^p\dx<\infty.
\end{equation}
\item
In the case $1<p<\infty$, $W$ is said to satisfy the matrix Muckenhoupt $A_p$-condition relative to $\cB$ provided 
\begin{equation}\label{eq:Roudenko}
 [W]_{{\mathbf{A}_p(\cB)}}:=\sup_{E\in \cB} \bigg[\fint_E\left( \fint_E \big\|W^{1/p}(x)W^{-1/p}(t)\big\|^{p'} {\dt}\right)^{p/p'} {\dx}\bigg]^{1/p}<\infty,
\end{equation}
with $p'$ the dual exponent to $p$, i.e., $1/p+1/{p'}=1$. 
The norm $\|\cdot\|$ appearing in the integrals in Equations\ \eqref{eq:mA1} and \eqref{eq:Roudenko} is any matrix norm on the $N\times N$ matrices. We write $W\in \mathbf{A}_p(\cB)$ whenever $[W]_{{\mathbf{A}_p(\cB)}}<\infty$, $0<p<\infty$.
\end{itemize}
\end{definition}

\begin{remark}\leavevmode\label{re:dual}
\begin{itemize}
    \item It can be verified that $W\in\mathbf{A}_p(\cB)$,  $1<p<\infty$, implies $W^{-p'/p}\in \mathbf{A}_{p'}(\cB)$ with $p'$ the dual exponent to $p$, see \cite{Roud:2003a} for further details. 
    \item In the scalar case $N=1$, it is straightforward to verify that condition \eqref{eq:Roudenko} reduces to the corresponding scalar condition \eqref{eq:AP} for  $1<p<\infty$, while \eqref{eq:mA1} reduces to the scalar $A_1$ condition \eqref{eq:A1} for any $0<p\leq 1$.
\end{itemize}
\end{remark}
The matrix-weighted vector-valued $L^p$-spaces, $0<p<\infty$, will be central to the present study. For $W\colon
\cD\to\C^{N\times N}$ a matrix weight, let $L^p(W)$ denote the family of
measurable functions $\vf\colon \cD\to\C^N$ with 
\begin{equation*}
  \norm{\vf}_{L^p(W)}:=\Biggl(\int_\cD\abs{W^{1/p}(x)\vf(x)}^p\,\d x\Biggr)^{1/p}<\infty.
\end{equation*}
In order to turn $L^p(W)$ into a (quasi-)Banach space, one has to factorize over 
 $\{\vf\colon \cD\to\C^N;\norm{\vf}_{L^p(W)}=0\}$. 
For $1<p<\infty$, it can be verified that $L^p(W)$ is a Banach space and its dual space is $L^{p'}(W^{-p'/p})$, see \cite{Volb:1997a} for further details. 

For $1< p<\infty$ we also have the following important general characterization of $\mathbf{A}_p(\cB)$, where we let $\mathbf{1}_E$ denote the characteristic function of a measurable subset $E\subseteq \bR^d$.

\begin{prop}\label{prop:avg}
    Consider a matrix-valued function $W\colon \cD\rightarrow \bC^{N\times N}$, which is measurable, locally integrable, and strictly positive definite a.e. For $1<p<\infty$, we have $W\in \mathbf{A}_p(\cB)$  if and only if the averaging operators
    $$A_E\vf:=\mathbf{1}_E \fint_E \vf\, \d t,$$
    are uniformly bounded on $L^p(W)$ for $E\in\cB$. Moreover, the constant $[W]_{{\mathbf{A}_p(\cB)}}$ is equivalent to $\sup_{E\in\cB}\|A_E\|_{L^p(W)\rightarrow L^p(W)}$.
\end{prop}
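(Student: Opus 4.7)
My plan is to use the standard reducing-operator framework. For each $E\in\cB$ I introduce positive-definite $N\times N$ matrices $V_E$ and $V_E'$ satisfying
\begin{equation*}
|V_E\vv|^p \asymp \fint_E |W^{1/p}(x)\vv|^p\dx, \qquad |V_E'\vv|^{p'} \asymp \fint_E |W^{-1/p}(t)\vv|^{p'}\dt, \qquad \vv\in\bC^N,
\end{equation*}
with equivalence constants depending only on $N$ and $p$. Such matrices exist because each right-hand side is a norm on $\bC^N$, and on a finite-dimensional space every norm is comparable to a Hilbertian norm $\vv\mapsto|A\vv|$.

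For the implication $W\in\mathbf{A}_p(\cB)\Longrightarrow$ uniform boundedness, fix $\vf\in L^p(W)$, $E\in\cB$, and set $\vu:=\fint_E\vf(t)\dt$. For each $x$ pick a unit vector $\ve(x)$ with $|W^{1/p}(x)\vu|=\langle W^{1/p}(x)\vu,\ve(x)\rangle$; inserting $W^{-1/p}(t)W^{1/p}(t)$ under the averaged pairing and using self-adjointness of $W^{\pm 1/p}$ yields
\begin{equation*}
|W^{1/p}(x)\vu|\le \fint_E |W^{1/p}(t)\vf(t)|\,\|W^{1/p}(x)W^{-1/p}(t)\|\dt.
\end{equation*}
H\"older with exponents $(p,p')$, raising to the $p$-th power, and integrating in $x$ over $E$ give $\|A_E\vf\|_{L^p(W)}^p\le[W]_{\mathbf{A}_p(\cB)}^p\|\vf\|_{L^p(W)}^p$, so $\sup_E\|A_E\|_{L^p(W)\to L^p(W)}\le[W]_{\mathbf{A}_p(\cB)}$.

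For the reverse implication, set $M:=\sup_E\|A_E\|_{L^p(W)\to L^p(W)}$ and test against $\vf(t)=\mathbf{1}_E(t)W^{-1/p}(t)\vg(t)$ with arbitrary $\vg\colon E\to\bC^N$. Since $\|\vf\|_{L^p(W)}^p=\int_E|\vg|^p\dt$, the bound $\|A_E\vf\|_{L^p(W)}\le M\|\vf\|_{L^p(W)}$ together with the defining property of $V_E$ rewrites as
\begin{equation*}
\Bigl|V_E\fint_E W^{-1/p}(t)\vg(t)\dt\Bigr|\lesssim M\Bigl(\fint_E|\vg(t)|^p\dt\Bigr)^{1/p}.
\end{equation*}
This is the statement $\|S\|\lesssim M$ for the linear map $S\vg:=V_E\fint_E W^{-1/p}(t)\vg(t)\dt$ from $L^p(E;\bC^N)$ (with the averaged norm) into $\bC^N$. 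Computing the adjoint against the averaged pairing gives $S^*\vw(t)=W^{-1/p}(t)V_E\vw$, whose averaged $L^{p'}$-norm is $\asymp |V_E'V_E\vw|$ by the defining property of $V_E'$. Duality then yields $\|V_EV_E'\|=\|V_E'V_E\|\lesssim M$ uniformly in $E$.

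The proof is completed by the equivalence $[W]_{\mathbf{A}_p(\cB)}^p\asymp\sup_E\|V_EV_E'\|^p$. Equivalence of norms on $\bC^{N\times N}$ and self-adjointness of $W^{\pm 1/p}$ allow me to replace $\|W^{1/p}(x)W^{-1/p}(t)\|^{p'}$ by $\sum_j|W^{-1/p}(t)W^{1/p}(x)\ve_j|^{p'}$ for a fixed orthonormal basis; averaging in $t$ via the defining property of $V_E'$ gives $\sum_j|V_E'W^{1/p}(x)\ve_j|^{p'}\asymp\|V_E'W^{1/p}(x)\|^{p'}$, and applying the same norm-equivalence trick to the outer $p/p'$-average in $x$, combined with the defining property of $V_E$, produces $\asymp\|V_EV_E'\|^p$. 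The main technical obstacle is keeping track of the norm-equivalence constants through the mixed exponents $p$ and $p'$; no ingredient beyond reducing operators, H\"older's inequality, and equivalence of norms in finite dimension is needed.
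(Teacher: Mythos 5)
Your proof is correct. The paper does not give its own argument for Proposition \ref{prop:avg} --- it defers to \cite{NielSiki:2025a} --- so there is no in-paper proof to compare against, but your route is the standard one. The forward bound $\sup_{E\in\cB}\|A_E\|_{L^p(W)\to L^p(W)}\le[W]_{\mathbf{A}_p(\cB)}$ follows cleanly from the factorization $W^{1/p}(x)\vf(t)=W^{1/p}(x)W^{-1/p}(t)\,W^{1/p}(t)\vf(t)$ under the average followed by H\"older. For the converse, testing with $\vf=\mathbf{1}_EW^{-1/p}\vg$ and dualizing $S\vg:=V_E\fint_EW^{-1/p}(t)\vg(t)\dt$ against the averaged $L^p$/$L^{p'}$ pairing correctly isolates $\|V_E'V_E\|\lesssim\sup_E\|A_E\|$; the remaining input, $[W]_{\mathbf{A}_p(\cB)}\asymp\sup_E\|V_EV_E'\|$, is precisely the reducing-operator characterization the paper records as Eq.~\eqref{eq:reduc}, and your sketch of that equivalence via the norm comparison \eqref{eq:normy} and self-adjointness of $W^{\pm1/p}$, $V_E$, $V_E'$ is also right. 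Two minor remarks: the paper's ``$\inf_{E\in\cB}\|A_E\|$'' must be a misprint for $\sup_{E\in\cB}\|A_E\|$ (each $A_E$ is a projection, so the infimum is always at least $1$ and carries no weight information, while the supremum is what your argument proves comparable to $[W]_{\mathbf{A}_p(\cB)}$); and the selection of a ``unit vector $\ve(x)$'' is best phrased as a supremum over unit vectors $\ve$ to sidestep measurability of $x\mapsto\ve(x)$, though the resulting estimate is unaffected.
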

The proof of Proposition \ref{prop:avg} can be found in \cite{NielSiki:2025a} (strictly speaking, \cite{NielSiki:2025a} considers only the domain $\bT^d$, but the proof extends verbatim to the setting of $\bR^d$).  Proposition \ref{prop:avg} leads to the following specific connection between Muckenhoupt matrix weights and scalar weights defined by the basis $\cB$. 
\begin{lemma}\label{le:sc}
Let $0<p<\infty$ and suppose $W\colon \cD\rightarrow \bC^{N\times N}$ is in $\mathbf{A}_p(\cB)$. Then there is a constant $C:=C(p)$ such that
\begin{itemize}
    \item[(i)] for any $x\in\bR^d$, the scalar weight $w_x(t):=|W^{1/p}(t)x|^p$ is in ${A}_p(\cB)$ and $[w_x]_{  {A}_p(\cB)}\leq C [W]_{  \mathbf{A}_p(\cB)}$ provided $1<p<\infty$,
    \item[(ii)] for any $x\in\bR^d$, the scalar weight $w_x(t):=|W^{1/p}(t)x|^p$ is in ${A}_1(\cB)$ and $[w_x]_{  {A}_1(\cB)}\leq  [W]_{  \mathbf{A}_p(\cB)}$ provided $0<p\leq 1$.
\end{itemize}
\end{lemma}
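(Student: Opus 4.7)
The key observation I would exploit is the pointwise matrix-vector identity
$$W^{1/p}(t)x = \bigl[W^{1/p}(t)W^{-1/p}(s)\bigr]\, W^{1/p}(s)x,$$
which immediately yields the submultiplicative bound
$$|W^{1/p}(t)x| \leq \|W^{1/p}(t)W^{-1/p}(s)\|\cdot |W^{1/p}(s)x|$$
for a.e.\ $s,t\in\cD$. Writing $u(t):=|W^{1/p}(t)x|$ so that $w_x=u^p$, this inequality and its companion (with $s$ and $t$ swapped) are the only matrix facts I would need; both (i) and (ii) then follow by inserting these estimates into the respective scalar $A_p$-expressions.

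For part (i), I would fix $E\in\cB$ and $s\in E$, and use $u(t)^{-1}\leq \|W^{1/p}(s)W^{-1/p}(t)\|\,u(s)^{-1}$ together with $w_x^{-1/(p-1)}=u^{-p'}$ to estimate
$$\Bigl(\fint_E u(t)^{-p'}\d t\Bigr)^{p-1} \leq u(s)^{-p'(p-1)}\Bigl(\fint_E \|W^{1/p}(s)W^{-1/p}(t)\|^{p'}\d t\Bigr)^{p-1}.$$
The exponent identities $p'(p-1)=p$ and $p-1=p/p'$ are exactly what convert the right-hand side into $u(s)^{-p}$ multiplied by the inner $L^{p'/p}$-average appearing in the definition of $[W]_{\mathbf{A}_p(\cB)}$. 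Multiplying through by $w_x(s)=u(s)^p$ and averaging in $s$ over $E$ then produces the scalar $A_p$-product on the left, controlled on the right by (a power of) $[W]_{\mathbf{A}_p(\cB)}$; taking $\sup_{E\in\cB}$ finishes this case.

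For part (ii) I would raise the original pointwise bound to the $p$-th power to obtain $w_x(t)\leq \|W^{1/p}(t)W^{-1/p}(s)\|^p\,w_x(s)$, then divide by $w_x(s)$, average in $t$ over $E$, and take $\esssup$ over $s\in E$; this yields $[w_x]_{A_1(\cB)}\leq [W]_{\mathbf{A}_p(\cB)}$ directly, with no constant needed. The argument is essentially mechanical once the pointwise submultiplicative estimate is in hand; the only step requiring real attention is the exponent bookkeeping in (i), where the identities involving $p'$ are what align the $\d t$-average produced by freezing $s$ with the inner $\d t$-average that appears in $[W]_{\mathbf{A}_p(\cB)}$. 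No genuine obstacle is expected beyond this.
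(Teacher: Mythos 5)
Your part (ii) is essentially the paper's own proof: one simply inserts $x = W^{-1/p}(s)W^{1/p}(s)x$, takes norms, and uses \eqref{eq:mA1} directly. For part (i), however, you take a genuinely different route from the paper. The paper does not compute at all from the defining integral \eqref{eq:Roudenko}; instead it invokes Proposition~\ref{prop:avg} (the characterization of $\mathbf{A}_p(\cB)$ via uniform boundedness of the averaging operators $A_E$), observes that for the rank-one input $\vf = \phi\,\vx$ the bound $\|A_E\vf\|_{L^p(W)}\le C\|\vf\|_{L^p(W)}$ reads as uniform boundedness of the scalar averaging map on $L^p(w_\vx)$, and then reinterprets this through the scalar version of Proposition~\ref{prop:avg}. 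Your approach instead proves (i) directly from the pointwise submultiplicativity $|W^{1/p}(t)x|^{-1}\le \|W^{1/p}(s)W^{-1/p}(t)\|\,|W^{1/p}(s)x|^{-1}$, raises to the $p'$-th power, averages in $t$, uses the identities $p'(p-1)=p$ and $p-1=p/p'$, then multiplies by $w_x(s)$ and averages in $s$ to land exactly on the double integral in \eqref{eq:Roudenko}. Both arguments work. What your direct computation buys is self-containment (no appeal to the operator-theoretic Proposition~\ref{prop:avg}) and transparency about the exponent: your argument visibly produces $[w_x]_{A_p(\cB)} \le [W]_{\mathbf{A}_p(\cB)}^{p}$, which is the correct power given that $[W]_{\mathbf{A}_p(\cB)}$ in \eqref{eq:Roudenko} carries an outer exponent $1/p$ while the scalar quantity in \eqref{eq:AP} does not; the paper's stated linear bound $\le C[W]_{\mathbf{A}_p(\cB)}$ conceals this scale mismatch. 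What the paper's approach buys is brevity once Proposition~\ref{prop:avg} is available, and it illustrates the general principle that matrix $A_p$ information transfers to the scalar weights $w_x$ through operator boundedness.
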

 \begin{proof} For (i), let $\vx\in\bR^d$ and take any locally integrable scalar function $\phi:\bR^d\rightarrow \bC$. Now define $\vf(t):=\phi(t)\vx$. By Proposition \ref{prop:avg} there exists $C:=C_p$ such that for any $E\in\cB$,
 \begin{align*}
     \|A_E\vf\|_{L^p(W)}^p&=\fint_E \bigg|\mathbf{1}_E\fint \phi(u)\d u\bigg|^p |W^{1/p}(t)\vx|^p\,\d t\\&\leq C^p\|\vf\|_{L^p(W)}^p\\
&=\int_{\bR^d} |\phi(t)|^p|W^{1/p}(t)\vx|^p\,\d t.
 \end{align*}
    We now reinterpret the estimate using Proposition \ref{prop:avg} in the scalar setting. We obtain that the mappings $\phi\rightarrow \mathbf{1}_E\fint \phi$ are uniformly bounded on $L^p(w_\vx)$, and we conclude from Proposition \ref{prop:avg} that $w_\vx\in {A}_p(\cB)$ with $[w_x]_{  {A}_p(\cB)}\leq C [W]_{\mathbf{A}_p(\cB)}$ for some $C$ independent of $x$.

    For (ii),  let $\vx\in\bR^d$. Take any set $E\in\cB$. We now use the estimate \eqref{eq:mA1} to obtain, 
    \begin{align*}
\fint_E |W^{1/p}(s)\vx|^p\,\d s&=\fint_E |W^{1/p}(s)W^{-1/p}(t)W^{1/p}(t)\vx|^p\,\d s\\
    &\leq\fint_E ||W^{1/p}(s)W^{-1/p}(t)||^p\,\d s\cdot |W^{1/p}(t)\vx|^p\\
    &\leq [W]_{\mathbf{A}_p(\cB)} |W^{1/p}(t)\vx|^p
    \end{align*}
  for a.e.\ $t\in E$ as desired. The estimate is uniform in $E$ proving that $|W^{1/p}(t)\vx|^p\in  {A}_1(\cB)$ with 
 $[|W^{1/p}(t)\vx|^p]_{  {A}_1(\cB)}\leq  [W]_{  \mathbf{A}_p(\cB)}$ 
\end{proof}
We will need the elementary fact that for any matrix $A\in\bC^{N\times N}$, $r>0$, and an orthonormal basis $\{\ve_j\}$ for $\bC^N$, we can estimate the matrix norm as follows
\begin{equation}\label{eq:normy}
    \|A\|^r\asymp \sum_{j=1}^N |A\ve_j|^r,
\end{equation}
where the equivalence constants depend only on $N$ and $r$.
\begin{corollary}
Let $0<p<\infty$ and suppose $W\colon \cD\rightarrow \bC^{N\times N}$ is in $\mathbf{A}_p(\cB)$. Then $\|W\|\in {A}_{\max\{1,p\}}(\cB)$.
\end{corollary}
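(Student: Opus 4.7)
The plan is to realize $\|W(t)\|$ as (up to multiplicative constants) a finite sum of scalar weights in $A_{\max\{1,p\}}(\cB)$ produced by Lemma \ref{le:sc}, and then observe that the $A_q$-classes are stable under finite sums of positive weights.

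First, fix an orthonormal basis $\{\ve_j\}_{j=1}^N$ of $\bC^N$ and apply Lemma \ref{le:sc} with $\vx=\ve_j$ to produce scalar weights $w_j(t):=|W^{1/p}(t)\ve_j|^p$ in $A_{\max\{1,p\}}(\cB)$, each with constant controlled by $[W]_{\mathbf{A}_p(\cB)}$. The equivalence \eqref{eq:normy} with $r=p$ gives $\sum_{j=1}^N w_j(t)\asymp\|W^{1/p}(t)\|^p$. All matrix norms on $\bC^{N\times N}$ being equivalent, it suffices to argue with the operator norm, for which positive definiteness of $W(t)$ together with the spectral decomposition yields $\|W^{1/p}(t)\|^p=\|W(t)\|$ pointwise. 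Hence $\|W(t)\|\asymp\sum_{j=1}^N w_j(t)$.

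Second, I verify that if $u_1,\dots,u_N$ are positive weights in $A_q(\cB)$ for some $q\in[1,\infty)$, then $\sum_k u_k\in A_q(\cB)$. For $q=1$, the pointwise bound $\sum_k u_k\geq u_j$ yields $\esssup_E(\sum_k u_k)^{-1}\leq\esssup_E u_j^{-1}$ for each $j$, and combined with linearity of the integral one obtains
$$\fint_E\Bigl(\sum_k u_k\Bigr)\cdot\esssup_{E}\Bigl(\sum_k u_k\Bigr)^{-1}\leq\sum_k\fint_E u_k\cdot\esssup_E u_k^{-1}\leq\sum_k[u_k]_{A_1(\cB)}.$$
For $q>1$, the same pointwise dominance gives $(\sum_k u_k)^{-1/(q-1)}\leq u_j^{-1/(q-1)}$ for every $j$, and distributing the sum in the numerator leads to
$$\fint_E\Bigl(\sum_k u_k\Bigr)\cdot\biggl(\fint_E\Bigl(\sum_k u_k\Bigr)^{-1/(q-1)}\biggr)^{q-1}\leq\sum_k\fint_E u_k\cdot\biggl(\fint_E u_k^{-1/(q-1)}\biggr)^{q-1}\leq\sum_k[u_k]_{A_q(\cB)}.$$
Taking the supremum over $E\in\cB$ produces $\sum_k u_k\in A_q(\cB)$, and combined with the first step this yields $\|W\|\in A_{\max\{1,p\}}(\cB)$.

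Nothing in the argument is genuinely difficult; the only step that requires a small trick is the closure of $A_q$ under sums in the case $q>1$, which is handled by dominating the integrated inverse using the pointwise dominance and then splitting the numerator term by term.
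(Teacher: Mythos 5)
Your proof is correct and follows essentially the same route as the paper: identify $\|W(t)\|=\|W^{1/p}(t)\|^p$ (spectral norm), use \eqref{eq:normy} to write this as a finite sum of the scalar weights $|W^{1/p}(t)\ve_j|^p$ handled by Lemma \ref{le:sc}, and conclude. The only difference is that you explicitly verify the closure of $A_q(\cB)$ under finite sums (and under pointwise equivalence), a standard fact the paper simply asserts; your verification of it is correct.
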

\begin{proof}
  Let $\{\ve_j\}$ be the standard basis for $\bC^N$.  We have, using the spectral norm $\|\cdot\|$ on $\bC^{N\times N}$ and the estimate \eqref{eq:normy},
    \begin{align*}
        \|W(x)\|&=\|W^{1/p}(x)\|^{p}\\
                &\asymp \sum_{j=1}^N |W^{1/p}(x)\ve_j|^{p},
    \end{align*}
 so $\|W(x)\|$ is in ${A}_{\max\{1,p\}}(\cB)$ as it is equivalent to a sum of scalar weights in ${A}_{\max\{1,p\}}(\cB)$ by Lemma \ref{le:sc}. Here we use the elementary fact that a finite sum of ${A}_q(\cB)$ weights is again an ${A}_q(\cB)$ weight, with a constant depending only on $N$ and the individual ${A}_q(\cB)$ constants.
\end{proof}
\section{Weights relative to bases generated by pseudo-metrics}\label{sec:metr}
The structured bases considered in Section \ref{s:a} are extremely general, with almost no restrictions imposed on the sets in $\cB$. One cannot reasonably expect that every desired property of weights should hold in such generality. In this section, we make a specific choice and consider a special subfamily of structured bases for $\bR^d$ induced by a pseudo-metric in order to deduce a certain specific correspondence between scalar and matrix Muckenhoupt weights.  

Suppose we are given a distance function $\varrho:\bR^d\times\bR^d\rightarrow [0,\infty)$ satisfying
\begin{itemize}
    \item[(i)] $\varrho(x,x)=0,\qquad x\in\bR^d,$
    \item[(ii)] $\varrho(x,y)=\varrho(y,x),\qquad x,y \in\bR^d,$
    \item[(iii)] there is a constant $c$ such that $\varrho(x,z)\leq c[\varrho(x,y)+\varrho(y,z)], \qquad x,y,z \in\bR^d.$
\item[(iv)] given an open neighborhood $N$ of $x\in\bR^d$ there is an $\epsilon>0$ such that $$B_{\varrho}(x,\epsilon):=\{y\in\bR^d:\varrho(x,y)<\epsilon\}\subseteq N.$$
\item[(v)] The balls $B_{\varrho}(x,r)$ are measurable, the measure $|B_{\varrho}(x,r)|$ is continuous as a function of $r>0$, and there is a doubling constant $c'$ such that
\begin{equation}\label{eq:ddo}
|B_{\varrho}(x,2r)|\leq c'|B_{\varrho}(x,r)| 
\end{equation}
for all $x\in\bR^d$ and $r>0$.
\end{itemize}
Let us now fix such a function $\varrho$. It is clear that we can define a corresponding structured basis by letting
$$\cB_\varrho:=\big\{B_{\varrho}(x,r):x\in\bR^d,r>0\big\}.$$
Scalar Muckenhoupt weights relative to $\cB_\varrho$ have been studied in detail by Calder\'on
\cite{Cald:1976a}, see also Jawerth \cite[Example 4.1]{Jawe:1986a} and Kurtz \cite{Kurt:1975a}. A direct consequence of the $A_p(\cB_\varrho)$ condition is that any scalar Muckenhoupt weight $w\in A_p(\cB_\varrho)$, $1\leq p<\infty$, satisfies for $F\in \cB_\varrho $ and any measurable subset $E\subseteq F$ with $|E|>0$,
\begin{equation}\label{eq:ddd}
    \frac{w(F)}{w(E)}\leq [w]_{{A}_p(\cB_\varrho)}\bigg(\frac{|F|}{|E|}\bigg)^p,
\end{equation}
see \cite[Lemma 4]{Cald:1976a}. By letting $F=B_{\varrho}(x,2r)$ and $E=B_{\varrho}(x,r)$, estimate \eqref{eq:ddd}, together with 
the doubling assumption \eqref{eq:ddo},
implies a corresponding doubling condition for the measure $\d\mu=w\dt$, 
\begin{equation}\label{eq:doubb}
\int_{B_{\varrho}(x,2r)} w(t) \dt\leq C \int_{B_{\varrho}(x,r)} w(t) \dt,
\end{equation}
with $C=(c')^p[w]_{{A}_p(\cB_\varrho)}$. 
It is well-known that the doubling condition \eqref{eq:doubb} implies that for any $\lambda\geq 1$,
\begin{equation}\label{eq:dou_exp}
\int_{B_{\varrho}(x,\lambda r)} w(t) \dt\leq C_w \lambda^\beta\int_{B_{\varrho}(x,r)} w(t) \dt,
\end{equation}
with $\beta:=\log_2C$.

Estimate  \eqref{eq:doubb} has the following immediate implications for matrix weights.
\begin{lemma}\label{le:do}
   Let $0<p<\infty$ and suppose $W$ is a $\mathbf{A}_p(\cB_\varrho)$ weight. Then it holds that
   \begin{enumerate}
   \item[(i)] For $x\in\bR^d$, the weight $u_x(\cdot):=|W^{1/p}(\cdot)x|^p$ satisfies the doubling condition
   \begin{equation}\label{eq:Wdoub}
   \int_{B_{\varrho}(y,2r)} u_x(t)\,\d t\leq C\int_{B_{\varrho}(y,r)} u_x(t)\,\d t,
    \end{equation}
         with a constant $C:=C([W]_{\mathbf{A}_p(\cB_\varrho)})$ independent of $x,y\in\bR^d$ and $r>0$.
       \item[(ii)] $\|W(t)\|$ satisfies the doubling condition
       $$\int_{B_{\varrho}(x,2r)} \|W(t)\|\,\d t\leq C\int_{B_{\varrho}(x,r)} \|W(t)\|\,\d t,$$
        with a constant $C:=C([W]_{\mathbf{A}_p(\cB_\varrho)})$ independent of $x\in\bR^d$ and $r>0$.
        \item[(iii)] in case $1<p<\infty$, the weight $v_x(\cdot):=\|W^{1/p}(x)W^{-1/p}(\cdot)\|^{p'}$ satisfies the doubling condition
        $$\int_{B_{\varrho}(y,2r)} v_x(t)\,\d t\leq C\int_{B_{\varrho}(y,r)} v_x(t)\,\d t,$$
         with a constant $C:=C([W]_{\mathbf{A}_p(\cB_\varrho)})$ independent of $x,y\in\bR^d$ and $r>0$.
   \end{enumerate}
\end{lemma}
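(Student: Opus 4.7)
The plan is to reduce each of the three claims to the scalar doubling estimate \eqref{eq:doubb}, which holds for any scalar weight in $A_{\max\{1,p\}}(\cB_\varrho)$ with doubling constant controlled by $(c')^{\max\{1,p\}}$ and the scalar $A$-constant. Parts (i) and (ii) are essentially one-line applications of results already established, while part (iii) requires an extra duality step and is where the real work lies.

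For (i), observe that $u_x$ is precisely the scalar weight $w_x$ of Lemma \ref{le:sc}. That lemma furnishes $u_x\in A_{\max\{1,p\}}(\cB_\varrho)$ with $[u_x]_{A_{\max\{1,p\}}(\cB_\varrho)}\leq C[W]_{\mathbf{A}_p(\cB_\varrho)}$, the constant $C$ being independent of $x$. Feeding this uniform bound into \eqref{eq:doubb} yields the desired doubling with a constant depending only on $[W]_{\mathbf{A}_p(\cB_\varrho)}$ and $c'$. Part (ii) is handled identically, invoking the corollary to Lemma \ref{le:sc} (which furnishes $\|W\|\in A_{\max\{1,p\}}(\cB_\varrho)$) in place of Lemma \ref{le:sc} itself.

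For (iii), the obstacle is that $v_x(t)=\|W^{1/p}(x)W^{-1/p}(t)\|^{p'}$ is not of the form $|W^{1/p}(t)\vx|^p$ to which Lemma \ref{le:sc} applies directly to $W$. The way around is to invoke the duality statement in Remark \ref{re:dual}: $W\in\mathbf{A}_p(\cB_\varrho)$ implies $W^{-p'/p}\in\mathbf{A}_{p'}(\cB_\varrho)$ with constant comparable to $[W]_{\mathbf{A}_p(\cB_\varrho)}$. Since $W^{-1/p}(t)=(W^{-p'/p}(t))^{1/p'}$ and $W^{\pm 1/p}$ are self-adjoint, applying \eqref{eq:normy} to the adjoint $W^{-1/p}(t)W^{1/p}(x)$ gives
\[
v_x(t)\;\asymp\;\sum_{j=1}^N \bigl|(W^{-p'/p}(t))^{1/p'}\, W^{1/p}(x)\ve_j\bigr|^{p'}.
\]
For each fixed $x$ the vectors $W^{1/p}(x)\ve_j$ are constants in $t$, so the $j$-th summand is exactly a weight of the form produced by Lemma \ref{le:sc}(i) applied to $W^{-p'/p}\in\mathbf{A}_{p'}(\cB_\varrho)$ with exponent $p'$. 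Each such scalar weight therefore lies in $A_{p'}(\cB_\varrho)$ with constant bounded by $C[W^{-p'/p}]_{\mathbf{A}_{p'}(\cB_\varrho)}\lesssim [W]_{\mathbf{A}_p(\cB_\varrho)}$, uniformly in $x$ and in $j\in\{1,\dots,N\}$. Applying \eqref{eq:doubb} to each summand with this uniform constant and summing over $j$ then yields the claimed doubling for $v_x$, with the doubling constant depending only on $[W]_{\mathbf{A}_p(\cB_\varrho)}$, $c'$, $p$, and $N$.
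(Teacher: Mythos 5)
Your proposal is correct and follows essentially the same route as the paper: reduce (i) to Lemma \ref{le:sc} plus the scalar doubling estimate \eqref{eq:doubb}, handle (ii) by the equivalence \eqref{eq:normy} (the cited corollary is just this computation packaged), and for (iii) pass to the adjoint $W^{-1/p}(t)W^{1/p}(x)$, expand via \eqref{eq:normy}, and apply Lemma \ref{le:sc} to the dual weight $W^{-p'/p}\in\mathbf{A}_{p'}(\cB_\varrho)$ from Remark \ref{re:dual}. No gaps.
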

\begin{proof}
Claim (i) is a consequence of \eqref{eq:doubb} and the fact that $|W^{1/p}(t)x|^p\in A_{\max\{1,p\}}(\cB_\varrho)$ with $A_{\max\{1,p\}}(\cB_\varrho)$ constant that depends only on $[W]_{\mathbf{A}_p(\cB_\varrho)}$. 
For (ii) we fix an orthonormal basis $\{\ve_j\}$ for $\bC^N$. By \eqref{eq:normy},
\begin{align*}
\int_{B_{\varrho}(x,2r)} \|W(t)\|\,\d t&\asymp  \sum_{j=1}^N\int_{B_{\varrho}(x,2r)} |W^{1/p}(t)\ve_j|^p\,\d t\\
&\leq C\sum_{j=1}^N\int_{B_{\varrho}(x,r)} |W^{1/p}(t)\ve_j|^p\,\d t\\
&\asymp\int_{B_{\varrho}(x,r)} \|W(t)\|\,\d t,
\end{align*}
where we have again used the estimate \eqref{eq:doubb} together with the fact that $|W^{1/p}(t)\ve_j|^p\in A_{\max\{1,p\}}(\cB_\varrho)$ with an $A_{\max\{1,p\}}(\cB_\varrho)$ bound that depends only on $[W]_{\mathbf{A}_p(\cB_\varrho)}$.
Finally, for (iii), we use the fact that for two self-adjoint operators $P,Q$ on a Hilbert space, $\|PQ\|=\|QP\|$, which implies that $$v_x(\cdot):=\|W^{1/p}(x)W^{-1/p}(t)\|^{p'}=
\|W^{-1/p}(t)W^{1/p}(x)\|^{p'}.$$
Hence, by the same argument as in (ii),
\begin{align*}
\int_{B_{\varrho}(y,2r)} v_x(t)\,\d t&\asymp  \sum_{j=1}^N\int_{B_{\varrho}(y,2r)} |W^{-1/p}(t)W^{1/p}(x)\ve_j|^{p'}\,\d t\\
&\leq C\sum_{j=1}^N\int_{B_{\varrho}(y,r)} |W^{-1/p}(t)W^{1/p}(x)\ve_j|^{p'}\,\d t\\
&\asymp\int_{B_{\varrho}(y,r)} v_x(t)\,\d t,
\end{align*}
where we used that $W^{-p'/p}\in \mathbf{A}_{p'}(\cB_\varrho)$ since $W\in \mathbf{A}_p(\cB_\varrho)$, cf.\ Remark \ref{re:dual}.
\end{proof}

One of the main results derived by Calder\'on for weights relative to $\cB_\varrho$, see \cite[Theorem 1]{Cald:1976a}, is the following reverse H\"older estimate.
\begin{prop}\label{prop:revH}
  Let  $1\leq p<\infty$ and  suppose $w\in A_p(\cB_\varrho)$. Then there exist constants $r:=r([w]_{A_p(\cB_\varrho)},p,\varrho)>1$ and $c_1:=c_1([w]_{A_p(\cB_\varrho)},p,\varrho)>0$ such that
    \begin{equation}
        \bigg[\fint_B w^r\dx\bigg]^{1/r}\leq c_1\fint_B w\dx,\qquad B\in \cB_\varrho.
    \end{equation}
\end{prop}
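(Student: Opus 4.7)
The plan is to deduce the reverse H\"older inequality from the $A_p(\cB_\varrho)$ condition by first extracting a (quantitative) $A_\infty$-type estimate from \eqref{eq:ddd}, and then running a Calder\'on--Zygmund stopping-time argument adapted to the pseudo-metric balls. The first step is purely algebraic: applying \eqref{eq:ddd} with $F=B\in\cB_\varrho$ and the set $E$ replaced by $B\setminus E$ yields
$$
w(E)\leq\bigl(1-[w]_{A_p(\cB_\varrho)}^{-1}(1-|E|/|B|)^p\bigr)\,w(B),
$$
so there exist $\alpha\in(0,1)$ and $\eta\in(0,1)$, depending only on $p$ and $[w]_{A_p(\cB_\varrho)}$, with the property that $|E|\leq\alpha|B|$ implies $w(E)\leq\eta\,w(B)$. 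This is the $A_\infty$-type consequence that will drive the rest of the argument.

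Next, I would fix $B_0\in\cB_\varrho$, put $\lambda_0:=\fint_{B_0}w$, and for each $\lambda>\lambda_0$ perform a stopping-time decomposition: using a Vitali-type covering lemma (available in the pseudo-metric setting thanks to the quasi-triangle inequality and the Lebesgue doubling \eqref{eq:ddo}), I would produce a family $\{B_j\}\subset\cB_\varrho$ of finite overlap, with $\fint_{B_j}w>\lambda$, an enlargement $\widehat{B_j}$ satisfying $\fint_{\widehat{B_j}}w\leq\lambda$, and $w(x)\leq\lambda$ for almost every $x\in B_0\setminus\bigcup_j B_j$. The weight doubling \eqref{eq:doubb}, applied to the predecessor inequality, gives $w(B_j)\leq C_1\lambda|B_j|$ for a constant $C_1$ depending only on $[w]_{A_p(\cB_\varrho)}$ and $\varrho$. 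For $\beta>1$, Chebyshev yields $|\{w>\beta\lambda\}\cap B_j|\leq(C_1/\beta)|B_j|$, so choosing $\beta$ so large that $C_1/\beta\leq\alpha$, the first step produces $w(\{w>\beta\lambda\}\cap B_j)\leq\eta\,w(B_j)$. Summing and invoking finite overlap, I obtain
$$
g(\beta\lambda)\leq\eta'\,g(\lambda),\qquad\lambda>\lambda_0,
$$
where $g(\lambda):=w(\{x\in B_0:w(x)>\lambda\})$ and $\eta'\in(0,1)$.

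Iterating this inequality gives $g(\lambda)\leq C_2(\lambda/\lambda_0)^{-\delta}w(B_0)$ for all $\lambda>\lambda_0$ with $\delta:=-\log\eta'/\log\beta>0$. A layer-cake computation, using $F(\lambda):=|\{w>\lambda\}\cap B_0|\leq g(\lambda)/\lambda$ and $w(B_0)=\lambda_0|B_0|$, then gives for any $r\in(1,1+\delta)$
$$
\int_{B_0}w^r\dx = r\int_0^\infty\lambda^{r-1}F(\lambda)\d\lambda\leq \lambda_0^r|B_0|+C\int_{\lambda_0}^\infty\lambda^{r-2}g(\lambda)\d\lambda\leq C'\lambda_0^r|B_0|,
$$
and dividing by $|B_0|$ yields the claimed bound $\bigl(\fint_{B_0}w^r\bigr)^{1/r}\leq c_1\fint_{B_0}w$ with $r$ and $c_1$ depending only on $p$, $[w]_{A_p(\cB_\varrho)}$ and $\varrho$. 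The main technical obstacle in this programme is the Calder\'on--Zygmund cover in the second step: without a canonical dyadic grid of $\varrho$-balls one must rely on a Vitali/Besicovitch-type covering together with careful bookkeeping of the enlargement constants forced by the quasi-triangle inequality (iii), and one has to verify that the predecessor ball $\widehat{B_j}$ can be chosen with $\fint_{\widehat{B_j}}w\leq\lambda$ while still controlling the overlap of the family $\{B_j\}$; the subsequent iteration and layer-cake step are routine once this decomposition is in place.
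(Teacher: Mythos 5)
The paper does not give its own proof of this proposition: it is cited verbatim as Calder\'on's Theorem 1 in \cite{Cald:1976a}, so the comparison is to Calder\'on's argument rather than to an argument in this paper.

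Your first step (extracting from \eqref{eq:ddd} an $A_\infty$-type statement: $|E|\le\alpha|B|$ implies $w(E)\le\eta\,w(B)$ with $\eta=1-[w]_{A_p(\cB_\varrho)}^{-1}(1-\alpha)^p<1$) and your overall framework (stopping time, absorption, layer-cake) are the right ingredients. However, the recursion step has a genuine gap. You assert that from $g(\beta\lambda)\le\eta\sum_j w(B_j)$ one obtains $g(\beta\lambda)\le\eta'g(\lambda)$ by ``summing and invoking finite overlap.'' Finite overlap gives $\sum_j w(B_j)\lesssim w(\bigcup_j B_j)$, but the inclusion runs the wrong way: the stopping balls satisfy $\{w>\lambda\}\cap B_0\subseteq\bigcup_j B_j$ a.e., so $w(\bigcup_j B_j)\ge g(\lambda)$, not $\lesssim g(\lambda)$. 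The balls $B_j$ overshoot the level set (each $B_j$ has $\fint_{B_j}w>\lambda$, but need not lie inside $\{w>\lambda\}$), and nothing you wrote controls $\sum_j w(B_j)$ by $g(\lambda)$. One can only deduce $\sum_j w(B_j)\le c\lambda\sum_j|B_j|\le 2c\,g(\lambda/2)$, giving $g(\beta\lambda)\le 2c\eta\,g(\lambda/2)$; since $\eta$ is bounded below by $1-[w]_{A_p(\cB_\varrho)}^{-1}$ (it does not tend to $0$ as $\alpha\to0$) and $c$ is a doubling constant, there is no reason for $2c\eta<1$, so this does not yield decay of $g$.

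The standard repair, which is in spirit what Calder\'on does, is a \emph{nested} generational iteration rather than a flat recursion in $\lambda$: at generation $k$ one has pairwise disjoint stopping balls $\{B_j^{(k)}\}$; inside each $B_j^{(k)}$ one performs a new stopping-time decomposition at threshold $2\fint_{B_j^{(k)}}w$, obtaining disjoint sub-balls $\{B_i^{(k+1)}\}\subseteq B_j^{(k)}$. The disjointness of the children within the parent and the stopping condition give $\sum_i|B_i^{(k+1)}\cap B_j^{(k)}|\le\tfrac12|B_j^{(k)}|$, so your $A_\infty$ estimate (with $\alpha=\tfrac12$) applied inside $B_j^{(k)}$ yields $w\big(\bigcup_i B_i^{(k+1)}\cap B_j^{(k)}\big)\le\eta\,w(B_j^{(k)})$. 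Summing over $j$ and using disjointness of $\{B_j^{(k)}\}$ gives $w\big(\bigcup B^{(k+1)}\big)\le\eta\,w\big(\bigcup B^{(k)}\big)$, hence $w\big(\bigcup B^{(k)}\big)\le\eta^k w(B_0)$, and since $\{w>\beta^k\lambda_0\}\cap B_0\subseteq\bigcup B^{(k)}$ a.e.\ for a suitable $\beta$ determined by the doubling constants, this produces the desired power decay $g(\lambda)\le C(\lambda/\lambda_0)^{-\delta}w(B_0)$. The layer-cake finish is then routine. In the pseudo-metric setting this nested construction still requires some care -- one must arrange that the children can be taken to lie inside the parent ball (or a controlled dilate thereof, compensated by the Lebesgue doubling \eqref{eq:ddo}) since there is no canonical dyadic grid of $\varrho$-balls; this is precisely the technical content of Calder\'on's covering argument, which you correctly identified as the main obstacle but did not carry out.
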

This leads to the following new reverse H\"older estimate for matrix weights.  Given $B\in \cB_\varrho$ and $0<p<\infty$, we may consider the (quasi-)norm 
$$\eta_{p,B}(x)=\bigg(\fint_B |W^{1/p}(t)x|^p\dt\bigg)^{1/p}$$
on $\bC^N$. It is a consequence of a theorem by F.\ John, see \cite{John:2014a}, that there exists a corresponding sequence of positive definite $N\times N$ matrices $\{A_B\}_{B\in\cB_\varrho}$, called a corresponding sequence of reducing operators, such that
$$\eta_{p,B}(x)\asymp |A_Bx|$$
uniformly for $B\in \cB_\varrho$ and $x\in\bC^N$.  We refer to \cite[Section 5]{FrazRoud:2004a} for a detailed discussion of this result, including a discussion on the non-trivial adaptation to the quasi-normed case when $0<p<1$. In a similar fashion, we may associate reducing operators 
$\{A_B^{\#}\}_{B\in\cB_\varrho}$ to the (quasi-)norms
$$\eta_{p,B}^\prime(x)=\bigg(\fint_B |W^{-1/p}(t)x|^{p'}\dt\bigg)^{1/p'}.$$
Interestingly, for $1<p<\infty$, the matrix $A_p$-condition can be expressed in terms of the reducing operators as having the following uniform bound,
\begin{equation}\label{eq:reduc}
\sup_{B\in\cB_\varrho} \|A_BA_B^{\#}\|\leq C_p<\infty.
\end{equation}
The proof of the equivalence of \eqref{eq:reduc} with the $A_p$-condition is given in \cite[Section 3]{Roud:2003a} in  the case where $\cB$ is the family of  Euclidean balls. The reader can easily verify that the proof extends verbatim to any structured basis $\cB$.

We have the following reverse H\"older inequalities for $\mathbf{A}_p(\cB_\varrho)$-weights.

\begin{prop}
    Let $W$ be a $\mathbf{A}_p(\cB_\varrho)$ weight for some $1<p<\infty$. Then there exist $\delta>0$ and constants $C_q$ such that for all $B\in\cB_\varrho$,
    \begin{align}
        \fint_B \|W^{1/p}(t)A_B^{\#}\|^q\dt &\leq C_q,\qquad\text{for all } q<p+\delta\label{eq:PA1}\\
         \fint_B \|A_BW^{-1/p}(x)\|^q\dx &\leq C_q,\qquad\text{for all } q<p'+\delta.\label{eq:PA2}
    \end{align}
\end{prop}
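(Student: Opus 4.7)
The plan is to reduce both matrix-norm inequalities to scalar reverse Hölder estimates (Proposition \ref{prop:revH}) for appropriately chosen scalar weights, and then control the right-hand side via the reducing-operator bound \eqref{eq:reduc}. To set up the reduction, fix an orthonormal basis $\{\ve_j\}_{j=1}^N$ of $\bC^N$ and use \eqref{eq:normy} together with the spectral-norm identity $\|A_BW^{-1/p}(x)\|=\|W^{-1/p}(x)A_B\|$ (valid since both factors are self-adjoint, just as in Lemma \ref{le:do}(iii)) to write
\begin{align*}
\|W^{1/p}(t)A_B^{\#}\|^q&\asymp\sum_{j=1}^{N}|W^{1/p}(t)A_B^{\#}\ve_j|^q,\\
 \|A_BW^{-1/p}(x)\|^q&\asymp\sum_{j=1}^{N}|W^{-1/p}(x)A_B\ve_j|^q.
\end{align*}

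For \eqref{eq:PA1}, Lemma \ref{le:sc}(i) applied with $\vx:=A_B^{\#}\ve_j$ puts the scalar weight $w_j(t):=|W^{1/p}(t)A_B^{\#}\ve_j|^p$ in $A_p(\cB_\varrho)$, with constant bounded by $C[W]_{\mathbf{A}_p(\cB_\varrho)}$ uniformly in $B$ and $j$. Proposition \ref{prop:revH} then supplies some $r>1$ (depending only on $[W]_{\mathbf{A}_p(\cB_\varrho)}$, $p$ and $\varrho$) such that
$$\fint_B w_j(t)^r\dt\leq c_1^r\Bigl(\fint_B w_j(t)\dt\Bigr)^{\!r}\asymp c_1^r|A_BA_B^{\#}\ve_j|^{pr}\leq c_1^r\|A_BA_B^{\#}\|^{pr},$$
where the asymptotic step uses the reducing-operator property of $A_B$ and the final step is \eqref{eq:reduc}. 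Summing over $j$ gives $\fint_B\|W^{1/p}(t)A_B^{\#}\|^{pr}\dt\leq C$, and Jensen's inequality on the probability space $(B,|B|^{-1}\dt)$ extends this to every $q\leq pr$, yielding \eqref{eq:PA1} with $\delta_1:=p(r-1)>0$.

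For \eqref{eq:PA2} the same machinery is applied symmetrically, exploiting $W^{-p'/p}\in\mathbf{A}_{p'}(\cB_\varrho)$ from Remark \ref{re:dual}: Lemma \ref{le:sc}(i) now places $v_j(x):=|W^{-1/p}(x)A_B\ve_j|^{p'}$ in $A_{p'}(\cB_\varrho)$, Proposition \ref{prop:revH} yields some $r'>1$, and the bound $|A_B^{\#}A_B\ve_j|\leq\|A_B^{\#}A_B\|=\|A_BA_B^{\#}\|\leq C_p$ (using $\|MN\|=\|NM\|$ for the spectral norm, since $(A_BA_B^{\#})^{*}=A_B^{\#}A_B$) closes the estimate at exponent $p'r'$. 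Taking $\delta>0$ small enough that $p+\delta<\min(pr,p'r')$ yields the common range $q<p+\delta$ stated in the proposition.

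The main obstacle is uniformity: the self-improvement exponents $r,r'$ from Proposition \ref{prop:revH} must be independent of $B$ and of the basis vectors $\ve_j$, which is precisely what Lemma \ref{le:sc} delivers by dominating the scalar $A_p$- and $A_{p'}$-constants of $w_j,v_j$ uniformly by $[W]_{\mathbf{A}_p(\cB_\varrho)}$. A secondary technical point is the exponent matching in \eqref{eq:PA2} when $p>2$: ensuring $p'r'>p$ may require taking $\delta$ small and/or appealing to further self-improvement of the $A_{p'}$ class beyond what a single application of Proposition \ref{prop:revH} supplies.
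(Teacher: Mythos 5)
Your treatment of \eqref{eq:PA1} is essentially the paper's own proof: you decompose the matrix norm through \eqref{eq:normy}, observe via Lemma \ref{le:sc} that $t\mapsto|W^{1/p}(t)A_B^{\#}\ve_j|^p$ lies in $A_p(\cB_\varrho)$ with constant controlled by $[W]_{\mathbf{A}_p(\cB_\varrho)}$ uniformly in $B$ and $j$, apply the reverse H\"older inequality of Proposition \ref{prop:revH} with one exponent $r>1$, identify the averages with $|A_BA_B^{\#}\ve_j|^p$ through the reducing operators, and close with \eqref{eq:reduc}. The paper runs this chain for general $q$ with $q/p\le r$, whereas you run it at $q=pr$ and then use Jensen; the two are equivalent.

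For \eqref{eq:PA2} the paper only says the proof is ``similar'', and your dual argument --- working with $v_j(x)=|W^{-1/p}(x)A_B\ve_j|^{p'}\in A_{p'}(\cB_\varrho)$, which rests on $W^{-p'/p}\in\mathbf{A}_{p'}(\cB_\varrho)$ from Remark \ref{re:dual} --- is exactly that similar proof. What you call a ``secondary technical point'' is in fact the heart of the matter: this route yields the bound for $q<p'+\delta$, and no further self-improvement can push it to $q<p+\delta$ when $p>2$, because in that range the printed statement fails already for scalar weights. Take $N=1$, $\varrho$ the Euclidean metric, and $w(x)=|x|^\alpha$ with $d<\alpha<d(p-1)$ (possible precisely when $p>2$); then $w\in A_p$, but at $q=p$ the left-hand side of \eqref{eq:PA2} is comparable to $\fint_B w\cdot\fint_B w^{-1}$, which is infinite for balls containing the origin since $w^{-1}$ is not locally integrable there. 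So the exponent range in \eqref{eq:PA2} should be read as $q<p'+\delta$ (as in the isotropic matrix-weight literature), and with that reading your argument is complete and coincides with the paper's intended one; your fallback suggestion of bridging from $p'r'$ up to $p+\delta$ by extra self-improvement of the $A_{p'}$ class is a dead end.
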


\begin{proof}
We first prove \eqref{eq:PA2}. Since $W\in\mathbf{A}_p(\cB_\varrho)$, we have
$W^{-p'/p}\in\mathbf{A}_{p'}(\cB_\varrho)$, cf.\ Remark \ref{re:dual}. For the dual weight
$W^{-p'/p}$, the operators $A_B$ play the role of the reducing operators associated with
the $p'$-averages of $W^{-1/p}$, up to  dimensional constants.

Let $\{\ve_j\}$ be the standard basis for $\bC^N$. Applying Lemma \ref{le:sc} to the
matrix weight $W^{-p'/p}$ gives that
\[
        v_j(x):=|W^{-1/p}(x)A_B\ve_j|^{p'}
\]
is a scalar $A_{p'}(\cB_\varrho)$-weight, with Muckenhoupt constant independent of
$j$ and $B$. Thus, for $q>p'$ with $q/p'\leq r'$, where $r'>1$ is the reverse H\"older
exponent from Proposition \ref{prop:revH} applied to these scalar $A_{p'}$-weights, we obtain
\begin{align*}
     \fint_B \|A_BW^{-1/p}(x)\|^q\dx
     &\asymp \fint_B \|W^{-1/p}(x)A_B\|^q\dx\\
     &\asymp  \sum_{j=1}^N\fint_B |W^{-1/p}(x)A_B\ve_j|^q\dx\\
     &\leq c \sum_{j=1}^N\left(\fint_B |W^{-1/p}(x)A_B\ve_j|^{p'}\dx\right)^{q/p'}\\
     &\asymp \left(\fint_B \|W^{-1/p}(x)A_B\|^{p'}\dx\right)^{q/p'}\\
     &\asymp \|A_B^{\#}A_B\|^q
     \leq cC_p^q.
\end{align*}
Here we used that $A_B$, $A_B^{\#}$, and $W^{-1/p}(x)$ are positive definite, so
$\|A_BW^{-1/p}(x)\|=\|W^{-1/p}(x)A_B\|$, and then the reducing-operator
characterization \eqref{eq:reduc}.

The proof of \eqref{eq:PA1} is identical but applied directly to $W$: the scalar weights
\[
        w_j(t):=|W^{1/p}(t)A_B^{\#}\ve_j|^p
\]
belong to $A_p(\cB_\varrho)$ uniformly in $B$ and $j$, and Proposition \ref{prop:revH}
therefore gives \eqref{eq:PA1} for all $q<p+\delta_1$ for some $\delta_1>0$. Combining
this gain with the dual gain above and taking
\[
        \delta:=\min\{\delta_1,\,p'(r'-1)\}
\]
completes the proof.
\end{proof}

In the isotropic case, reverse H\"older estimates form the foundation for obtaining maximal function estimates in the matrix-weighted setting, see Christ and Goldberg \cite{ChriGold:2001a} and Goldberg \cite{Gold:2003a}. However, it will take us too far astray here to pursue the study of maximal functions in the anisotropic setting.
\section{A homogeneous structure on $\bR^d$}\label{sec:HS}
We now refine the setting from Section \ref{sec:metr} even further by considering homogeneous type spaces on $\bR^d$  created using a quasi-norm induced by a
one-parameter group of dilations. The quasi-norms turn out to form a sub-family of the pseudo-metrics considered in Section~\ref{sec:metr}.   

As before, we let $|\cdot|$ denote the Euclidean norm on $\bR^d$ induced by the inner product
$\langle \cdot,\cdot\rangle$. We assume that $A$ is a real symmetric $d\times d$ matrix with strictly positive eigenvalues. For $t>0$ define the group of dilations $\delta_t:\bR^d\to\bR^d$
 by $\delta_t := \exp (A\ln t)$ and let $\nu:=\textrm{trace}(A)$. Notice that $\delta_t$ is real and symmetric for any $t>0$. The matrix $A$ will be kept
fixed throughout the paper. Some well-known properties of $\delta_t$ are (see
\cite[Part II]{SteiWain:1978a}),
\begin{itemize}
\item $\delta_{ts}=\delta_t\delta_s$.
\item $\delta_1 = Id$ (identity on $\bR^d$).
\item $\delta_t \xi$ is jointly continuous in t and $\xi$, and $\delta_t\xi \to 0$ as $t\to
0^+$.
\item $|\delta_t|:=\textrm{det}(\delta_t)=t^\nu.$
\end{itemize}
According to \cite[Proposition 1.7]{SteiWain:1978a} there exists a strictly positive
symmetric matrix $P$ such that for all $\xi \in \bR^d$,
\begin{equation}\label{eq:P}
[\delta_t\xi]_P:=\langle P\delta_t\xi,\delta_t\xi \rangle^{\tfrac{1}{2}}
\end{equation}
is a strictly increasing function of $t$. This helps us introduce a quasi-norm $|\cdot|_A$ associated with $A$.
\begin{definition}\label{def:A}
We define the function $|\cdot|_A:\bR^d\to \bR_+$ by $|0|_A:=0$ and for
$\xi\in\bR^d\backslash\{0\}$ by letting $|\xi|_A$ be the unique solution $t$ to the
equation $[\delta_{1/ t}\xi]_P=1$.
\end{definition}

For notational convenience, let $\brac{\cdot}_A:=1+|\cdot|_A$ denote the \textit{bracket function}. It can be verified, see, e.g., \cite{SteiWain:1978a, BoruNiel:2008a}, that:
\begin{itemize}
%\item $|\cdot|_A \in C^{\infty}(\bR^d\backslash \{0\})$.
\item There exists a constant $C_A\geq 1$ such that
\begin{equation}\label{windingroads}
|\xi+\zeta|_A\le C_A(|\xi|_A+|\zeta|_A), \, \xi,\zeta \in \bR^d.
\end{equation}
\item We have the homogeneity property: $|\delta_t\xi|_A=t|\xi|_A$, $t>0$.
\item There exists constants $C_1,C_2,\alpha_1,\alpha_2>0$ such that
\begin{equation}\label{natasha}
C_1\min(|\xi|^{\alpha_1}_A,|\xi|^{\alpha_2}_A) \le |\xi|\le
C_2\max(|\xi|^{\alpha_1}_A,|\xi|^{\alpha_2}_A), \qquad \xi \in \bR^d.
\end{equation}
\item For any $\epsilon>0$, there exists $C_\epsilon<\infty$ such that
\begin{equation}\label{eq:integral}
\int_{\bR^d}\brac{x}_A^{-\nu-\epsilon}\,\d x\leq C_\epsilon\quad \text{and}\quad \sum_{k\in\bZ^d}\brac{t-k}_A^{-\nu-\epsilon}\leq C_\epsilon,
\end{equation}
for $t\in\bR^d$.
\end{itemize}
In fact, let $\sigma(A)$ denote the spectrum of $A$. Then we may select
$\alpha_1$ and $\alpha_2$ in \eqref{natasha} as follows,
\begin{equation}
  \label{eq:spec}
  \alpha_1=\min_{\lambda\in\sigma(A)} \lambda\leq \alpha_2=\max_{\lambda\in\sigma(A)} \lambda.
\end{equation}

\begin{example}
For $A=\textrm{diag}(\beta_1,\beta_2,\ldots,\beta_d)$, $\beta_i>0$, we have
$\delta_t=\textrm{diag}(t^{\beta_1},t^{\beta_2},\ldots,t^{\beta_d})$, and one may verify
that
\begin{equation*}
|\xi|_A\asymp \sum_{j=1}^d|\xi_j|^{\tfrac{1}{\beta_j}}, \qquad \xi \in\bR^d.
\end{equation*}
In particular, for $A=\textrm{diag}(1,1,\ldots,1)$, we recover (up to equivalence) the usual isotropic Euclidean norm on $\bR^d$.
\end{example}
\begin{remark}
The same construction works for $A$ a real matrix with eigenvalues having strictly positive real parts, but this added generality comes at the expense of having to also consider the quasi-distance induced by $B=A^\top$. We  shall not pursue this level of generality here.
\end{remark}

We define the balls ${B}_A(\xi,r):=\{\zeta\in\bR^d:|\xi-\zeta|_A<r\}$. It
can be verified that $|{B}_A(\xi,r)|=r^{\nu}\omega_d^A$, where
$\omega_d^A:=|{B}_A(0,1)|$, and consequently $(\bR^d,|\cdot|_A,d\xi)$ is a space of homogeneous
type with homogeneous dimension $\nu$.

We let 
\begin{equation}\label{eq:ball}\mathcal{B}_A:=\{{B}_A(\xi,r):\xi\in\bR^d,r>0\} \end{equation}
denote the collection of all such balls. We notice that $\mathcal{B}_A$ is invariant under affine transformations of the type $T_\eta:=\delta_\eta\cdot+c$ with $\eta>0$ and $c\in\bR^d$. In fact, 
one easily verifies that
\begin{equation}\label{eq:aff}
T_\eta {B}_A(\xi,r)={B}_A(\delta_\eta\xi+c,r\eta).    
\end{equation}
In particular, for $r>0$ and $\xi\in \bR^d$, we have
\[{B}_A(\xi,r)=T_r {B}_A(0,1)\]
with $T_r=\delta_r\cdot+\xi$. 

Moreover, it is clear that $$\varrho_A(\xi,\eta):=|\xi-\eta|_A,\qquad \xi,\eta\in\bR^d,$$ induces a quasi-metric that satisfies conditions (i)-(v) in Section \ref{sec:metr}, so all results derived in the previous sections apply to the structured basis $\cB_A$.  Let us also note that the doubling condition can be stated precisely in this case, for $\lambda\geq 1$,
\begin{equation}\label{eq:precise_doubling}
\frac{|B_A(x,\lambda r)|}{|B_A(x,r)|}=\frac{(\lambda r)^{\nu}\omega_d^A}{r^{\nu}\omega_d^A}=\lambda^\nu.
\end{equation}
Equation~\eqref{eq:precise_doubling} implies the following more precise version of \eqref{eq:dou_exp}; for $\lambda\geq 1$,
\begin{equation}\label{eq:doubb2}
\int_{B_{A}(x,\lambda r)} w(t) \dt\leq [w]_{{A}_p(\cB_A)}\lambda^{\nu p} \int_{B_{A}(x,r)} w(t) \dt
\end{equation}
for $w\in A_p(\cB_A)$, $1\leq p <\infty$.

 \begin{remark}
  By multiplying the matrix $P$ in \eqref{eq:P} by a suitable positive scalar, it is always possible to scale $|\cdot|_A$ such that
  \begin{equation}\label{eq:scaling}
  B_A(0,1)\subseteq \{z\in\bR^d:|z|<2\}.
  \end{equation}
  We shall assume that \eqref{eq:scaling} holds in the sequel.
 \end{remark}

It turns out that the Muckenhoupt $A_p$-classes for the family $\cB_A$ contain an abundance of non-trivial polynomial weights, both in the scalar and matrix case. We have the following lemma, which turns out to be a direct consequence of an important uniform norm estimate for multivariate polynomials  obtained by  Ricci and Stein \cite{RiccStei:1987a}.

\begin{lemma}\label{le:poly}
  Let $$P(x)=\sum_{\alpha\in(\bN\cup \{0\})^d:|\alpha|\leq k} c_\alpha x^\alpha,\qquad x\in\bR^d,$$
  be a polynomial of degree (at most) $k\in\bN$ and let $1<p<\infty$. Then, for $\beta>0$, $|P|^\beta$ belongs to $A_p(\cB_A)$ provided $k\beta<p-1$ with an $A_p(\cB_A)$-bound that depends only on $k$ and $\beta$.
  
\end{lemma}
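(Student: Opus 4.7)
The plan is to reduce the $A_p(\cB_A)$ condition for $|P|^\beta$ to a uniform polynomial reverse-Hölder inequality on the single unit ball $B_A(0,1)$, exploiting the affine invariance \eqref{eq:aff} of the basis $\cB_A$ in order to pass to arbitrary balls. The key classical input, which is precisely the uniform polynomial estimate proved by Ricci and Stein in \cite{RiccStei:1987a}, asserts that on any fixed bounded set $U\subset\bR^d$ with nonempty interior and for every $\gamma > -1/k$,
$$
\fint_U |Q(y)|^\gamma \, dy \asymp \left(\fint_U |Q(y)| \, dy\right)^\gamma,
$$
uniformly over all polynomials $Q$ of degree at most $k$, with implicit constants depending only on $k$, $\gamma$ and $U$. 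Applied with $U = B_A(0,1)$ this yields a universal equivalence valid for all such polynomials.

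Next, I would transfer the estimate from $B_A(0,1)$ to an arbitrary ball $B_A(\xi,r)\in\cB_A$. By \eqref{eq:aff}, $B_A(\xi,r) = T_r(B_A(0,1))$ with $T_r y = \delta_r y + \xi$, and since $\delta_r$ acts linearly, the map $y \mapsto Q_{r,\xi}(y) := P(\delta_r y + \xi)$ is again a polynomial in $y$ of degree at most $k$. A change of variables then gives
$$
\fint_{B_A(\xi,r)} |P(x)|^\gamma \, dx = \fint_{B_A(0,1)} |Q_{r,\xi}(y)|^\gamma \, dy \asymp \left(\fint_{B_A(0,1)} |Q_{r,\xi}(y)| \, dy\right)^\gamma = \left(\fint_{B_A(\xi,r)} |P(x)| \, dx\right)^\gamma,
$$
so the reverse-Hölder equivalence propagates uniformly across all balls in $\cB_A$.

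Under the hypothesis $-1 < k\beta < p-1$, the exponents $\gamma_1 := \beta$ and $\gamma_2 := -\beta/(p-1)$ both satisfy $\gamma_i > -1/k$. Applying the previous step to each exponent separately yields, for every $B\in\cB_A$,
$$
\fint_B |P|^{\beta}\,dx \cdot \left(\fint_B |P|^{-\beta/(p-1)}\,dx\right)^{p-1} \asymp \left(\fint_B |P|\,dx\right)^{\beta}\left(\fint_B |P|\,dx\right)^{-\beta} = 1,
$$
and hence $[|P|^\beta]_{A_p(\cB_A)}$ is bounded by a constant depending only on $k$ and $\beta$, through the reverse-Hölder constants associated to $\gamma_1$ and $\gamma_2$.

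The main obstacle is the negative-exponent case of the polynomial reverse-Hölder inequality: when $\gamma\in(-1/k,0)$, the integrand $|Q|^\gamma$ blows up near the zero set of $Q$, and the bound rests on Remez-type estimates reflecting the controlled vanishing order of polynomials of degree $\le k$, which is precisely what makes the threshold $-1/k$ sharp. Once this classical ingredient is accepted, the anisotropy presents no further difficulty, since polynomials are preserved under the linear dilations $\delta_r$ and every ball in $\cB_A$ is an affine image of the fixed set $B_A(0,1)$.
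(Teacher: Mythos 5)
Your proposal is correct and follows essentially the same route as the paper: reduce to the fixed ball $B_A(0,1)$ via the affine change of variables $T_r=\delta_r\cdot+\xi$ (using that $P\circ T_r$ is again a polynomial of degree at most $k$), then invoke the uniform Ricci--Stein polynomial estimate to control the two averages with exponents $\beta$ and $-\beta/(p-1)$. The only cosmetic difference is that you apply the reverse-H\"older equivalence directly on $B_A(0,1)$, whereas the paper encloses $B_A(0,1)$ in a Euclidean ball and cites the isotropic $A_p$ bound for polynomial weights from \cite[\S 6.5]{Stei:1993a}; note that the version of the Ricci--Stein estimate you quote for an arbitrary fixed bounded set with nonempty interior is itself deduced from the ball case by precisely this enclosing comparison.
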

\begin{proof}
Let $B_A(c,r)\in \cB_A$ and recall that $B_A(c,r)=T_r B_A(0,1)$, with $T_r\cdot:=\delta_r\cdot+c$. Then, by the change of variable $x=T_ru$,
\begin{align*}
    \fint_{B_A(c,r)} |P(x)|^\beta\dx \bigg[\fint_{B_A(c,r)} &|P(x)|^{-\frac{\beta}{p-1}}\dx\bigg]^{p-1} \\
    &=
    \fint_{B_A(0,1)} |P(T_ru)|^\beta\du \bigg[\fint_{ B_A(0,1)} |P(T_ru)|^{-\frac{\beta}{p-1}}\du\bigg]^{p-1}.
\end{align*}
Recall that the polynomials of degree $k$ are invariant under composition with affine transformations, so $P\circ T_r $ is also a polynomial of degree $k$. Now, as 
$B_A(0,1)$ is clearly a bounded set, we can find $\eta_0>0$ such that $B_A(0,1)\subseteq B_{\rho_0}:=\{\xi:|\xi|<\eta_0\}$. Hence,
 \begin{align*} 
   \fint_{B_A(0,1)} |P(T_ru)|^\beta\du \bigg[\fint_{ B_A(0,1)} &|P(T_ru)|^{-\frac{\beta}{p-1}}\du\bigg]^{p-1}\\
   &\leq 
    c\fint_{B_{\rho_0}} |P(T_ru)|^\beta\du \bigg[\fint_{ B_{\rho_0}} |P(T_ru)|^{-\frac{\beta}{p-1}}\du\bigg]^{p-1}\\
    &\leq cC_p,
\end{align*}
with $C_p$ the $A_p$ Muckenhoupt constant for $|P(T_ru)|^\beta$ in the standard isotropic case. It is known that $C_p$ depends only on $k$ and $\beta$, see \cite[\S 6.5]{Stei:1993a}.
\end{proof}

The polynomials of degree $k$ are invariant under composition with an invertible affine transformation, so Lemma \ref{le:poly} provides a large class of scalar $A_p(\cB_A)$ weights that are invariant under such transformations. Also, for this type of polynomial weights, the composition with an affine transformation does not modify the Muckenhoupt constant. 

 We infer from Lemma \ref{le:poly} that $\mathbf{A}_p(\cB_A)$  contains matrix weights with polynomial entries. Two easy examples  are provided by diagonal matrices with polynomial weights of the type considered in Lemma \ref{le:poly} and non-diagonal matrices with such polynomial entries satisfying a suitable diagonal-dominance condition (see \cite{NielRasm:2018a} for a discussion of diagonal-dominance conditions  for weights in an isotropic setting). 

The following lemma shows that we have a related type of affine invariance valid for any matrix weight in $\mathbf{A}_p(\cB_A)$. The specific invariance  obtained in Lemma \ref{le:inv} will be fundamental in the sequel.
\begin{lemma}\label{le:inv}
Suppose $0<p<\infty$. Let $W\in \mathbf{A}_p(\cB_A)$ and let $T_r:=\delta_r\cdot+c$ for some $r>0$ and $c\in\bR^d$. Then we have,
    $$[W\circ T_r]_{\mathbf{A}_p(\cB_A)}= [W]_{\mathbf{A}_p(\cB_A)}.$$
\end{lemma}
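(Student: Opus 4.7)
The heart of the matter is the identity \eqref{eq:aff}: the family $\cB_A$ is stable under $T_r$, and in fact $E \mapsto T_r E$ is a bijection $\cB_A \to \cB_A$ with $|T_r E| = r^\nu |E|$ (the Jacobian of $T_r$ being $\det \delta_r = r^\nu$). This means that normalized averages $\fint_E$ pull back nicely: for any integrable $g$,
\begin{equation*}
\fint_E g(T_r x)\dx = \fint_{T_r E} g(u)\du,
\end{equation*}
by the change of variable $u = T_r x$.

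The plan is to carry out exactly this change of variable inside the defining expressions for $[W \circ T_r]_{\mathbf{A}_p(\cB_A)}$ and observe that it coincides with $[W]_{\mathbf{A}_p(\cB_A)}$ once the supremum variable is relabeled. Concretely, for $1 < p < \infty$, I would fix $E \in \cB_A$ and apply \eqref{eq:Roudenko} to $W \circ T_r$; using $(W \circ T_r)^{\pm 1/p}(y) = W^{\pm 1/p}(T_r y)$ pointwise and substituting $u = T_r x$ on the outer integral and $v = T_r t$ on the inner integral turns the expression into
\begin{equation*}
\fint_{T_r E} \left( \fint_{T_r E} \big\| W^{1/p}(u) W^{-1/p}(v) \big\|^{p'} \dv \right)^{p/p'} \du,
\end{equation*}
with every Jacobian $r^\nu$ absorbed into the normalization of $\fint_{T_r E}$. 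Taking the supremum over $E \in \cB_A$ is the same as taking the supremum over $T_r E \in \cB_A$ by the bijectivity observed above, and this latter supremum is exactly $[W]_{\mathbf{A}_p(\cB_A)}^p$.

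For $0 < p \leq 1$, the argument is identical, applied to \eqref{eq:mA1}. The only point that warrants a sentence is that $\esssup_{t \in E}$ also transforms correctly, because $T_r$ is a diffeomorphism and hence preserves null sets, so $\esssup_{t \in E} F(T_r t) = \esssup_{v \in T_r E} F(v)$. I do not foresee any genuine obstacle: everything reduces to bookkeeping in a change of variables, and the invariance of $\cB_A$ under $T_r$ (equation \eqref{eq:aff}) does all the work. The result is the stated equality, with no multiplicative constants appearing because both Jacobians and cardinalities scale by $r^\nu$ in a way that cancels exactly inside each $\fint$.
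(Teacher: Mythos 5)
Your proposal is correct and follows essentially the same route as the paper: the identity $(W\circ T_r)^{\pm 1/p}=W^{\pm 1/p}\circ T_r$, a change of variables in the averages of \eqref{eq:Roudenko} (resp.\ \eqref{eq:mA1}), and the geometric invariance \eqref{eq:aff}. The only cosmetic difference is that you obtain equality in one stroke by noting that $E\mapsto T_rE$ is a bijection of $\cB_A$, whereas the paper proves the inequality $[W\circ T_r]\le [W]$ and then repeats the argument with $T_r^{-1}$ to get the converse.
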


\begin{proof}
 Take any $W\in \mathbf{A}_p(\cB_A)$, where we first suppose $1<p<\infty$. Form the matrix weight $V:=W\circ T_r$. One verifies directly that $V^{\pm 1/p}=W^{\pm 1/p}\circ T_r$. Then for $B\in\cB_A$, by a simple change of variables,
 \begin{align*}
\fint_B\bigg( \fint_B \big\|V^{1/p}(x)&V^{-1/p}(t)\big\|^{p'} {\dt}\bigg)^{p/p'} {\dx}\\
&=
\int_{B}\left( \int_{B} \big\|W^{1/p}( T_rx)W^{-1/p}( T_rt)\big\|^{p'} \frac{\d t}{|B|}\right)^{p/p'} \frac{\d x}{|B|}\\
&=\int_{{T_rB}}\left( \int_{{T_rB}} \big\|W^{1/p}(y)W^{-1/p}(s)\big\|^{p'} \frac{\d s}{|T_rB|}\right)^{p/p'} \frac{\d y}{|T_rB|}\\
&\leq [W]_{\mathbf{A}_p(\cB_A)},
\end{align*}
where the last estimate  relies on \eqref{eq:Roudenko} and the fact that $T_rB\in \cB_A$, which follows from the geometric invariance property stated in Equation~\eqref{eq:aff}.
 This estimate together with \eqref{eq:Roudenko} implies that $$[W\circ T_r]_{\mathbf{A}_p(\cB_A)}\leq [W]_{\mathbf{A}_p(\cB_A)}.$$
Since  $T_r^{-1}=\delta_{1/r}\cdot-\delta_{1/r}c$ is an affine map with similar structure, repeating the argument yields $[W]_{\mathbf{A}_p(\cB_A)}\leq [W\circ T_r]_{\mathbf{A}_p(\cB_A)}$ proving the result for $1<p<\infty$. The argument in the case $0<p\leq 1$ is similar, but based on the condition given in Equation~\eqref{eq:mA1}. We leave the details to the reader.
\end{proof}
\subsection{Fourier multipliers and sampling}
Let us now consider two scenarios where the invariance obtained in Lemma \ref{le:inv} can be put to specific use. 
We first consider Fourier multipliers in the matrix-weighted setting. 

Let us first specify our chosen normalization of the Fourier transform. For  $f\in L^1(\bR^d)$, we let 
\begin{equation}\label{def:f}\mathcal{F}(f)(\xi):=(2\pi)^{-d/2}\int_{\bR^d}
f(x)e^{- i x\cdot\xi}\dx,\qquad \xi\in\bR^d,
\end{equation}
denote the Fourier transform and we use the standard notation $\hat{f}(\xi)=\mathcal{F}(f)(\xi)$. With this normalization, the Fourier transform extends to a unitary transform on $L^2(\bR^d)$ and we denote the inverse Fourier transform by $\mathcal{F}^{-1}$. We let $\dS$ denote the usual Fr\'{e}chet  space of Schwartz functions on $\bR^d$, while $\ddS$ denotes the corresponding dual space of tempered distributions.

For $B=B_A(c,R)\in \cB_A$, we define the following class of vector-valued functions with band-limited coordinate functions, 
\begin{equation}
E_B:=\{\vf:\bR^d\rightarrow \bC^N: f_i\in \ddS \text{ and } \text{supp}(\hat{f}_i)\subseteq B, i=1,\ldots,N\}.
\end{equation}

A Fourier multiplier is a function $m\in L^\infty(\bR^d)$ that induces a corresponding bounded multiplier operator on $L^2(\bR^d)$,
$$m(D)f:=\mathcal{F}^{-1}(m\mathcal{F}f),\qquad f\in L^2(\bR^d).$$

We will need some additional notation for the proof of the multiplier result.
Pick $r_0>0$ such that the cube $\big[-\frac{1}{2},\frac{1}{2}\big)^d$ is contained in $B_A(0,r_0)$, which is possible by the estimate in \eqref{natasha}. Then define
 \begin{equation}\label{eq:Qk}
U_k:=B_A(0,r_0)+ k, \qquad k\in\bZ^d.
\end{equation}

Now, clearly, $U_k=U_\ell+k-\ell$, $k,\ell\in\bZ^d$, and whenever $U_k\cap U_\ell\not=\emptyset$, then $k-\ell\in B_A(0,r_0)+B_A(0,r_0)$,  and consequently  $|k-\ell|_A\leq 2C_Ar_0$. It follows that there exists $n_0:=n_0(r_0)<\infty$ such that
\begin{equation}\label{eq:height}1\leq \sum_{k\in\bZ^d} \mathbf{1}_{U_k}(z)\leq n_0,\qquad z\in\bR^d.
\end{equation}

We have the following Fourier multiplier result in the matrix-weighted setting.

\begin{proposition}\label{prop:main}
Let $0<p<\infty$ and suppose $W\in\mathbf{A}_p(\cB_A)$. Let $B=B_A(c,R)\in\cB_A$ and put $\beta:=\max\{\nu,\nu p\}$.  Suppose there is a constant $K$ such that the compactly supported function $\phi:B_A(c,R)\rightarrow\bC$ satisfies 
\begin{equation}\label{eq:e1}
|\mathcal{F}^{-1}(\phi)(x)|\leq K R^{\nu} (1+R|x|_A)^{-M},\qquad x\in\bR^d, 
\end{equation}
for some $$M>\max\big\{\nu+\max\{0,p-1\}\beta,(\nu+\beta)/\min\{1,p\}\big\}.$$  Then there exists a finite constant $C:=C([W]_{\mathbf{A}_p(\cB_A)},K,p)$ such that the Fourier multiplier
$$\phi(D)f:=\mathcal{F}^{-1}[\phi\cdot \mathcal{F}(f)],$$ defined for $f\in \ddS $ with $\text{supp}(\hat{f})\subseteq B_A(c,R)$, satisfies
$$\|\phi(D)\vf\|_{L^p(W)}\leq C \|\vf\|_{L^p(W)}$$
for all $\vf\in E_B\cap L^p(W)$.
\end{proposition}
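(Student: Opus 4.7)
My plan is to write $\phi(D)\vf=\psi\ast \vf$ with $\psi:=\mathcal{F}^{-1}(\phi)$, convert the multiplier bound into a scalar pointwise integral estimate against a matrix-weighted convolution kernel, and then control that integral by exploiting the matrix $\mathbf{A}_p(\cB_A)$-structure developed in Sections \ref{s:a}--\ref{sec:metr}. Setting $g(y):=|W^{1/p}(y)\vf(y)|$, so that $\|g\|_{L^p(\bR^d)}=\|\vf\|_{L^p(W)}$, the vector triangle inequality yields
\[
|W^{1/p}(x)\phi(D)\vf(x)|\le\int_{\bR^d}|\psi(x-y)|\,\|W^{1/p}(x)W^{-1/p}(y)\|\,g(y)\dy.
\]
Decomposing the integration domain into the anisotropic annuli $B_A(x,2^k/R)\setminus B_A(x,2^{k-1}/R)$ for $k\ge 0$ and applying the decay bound \eqref{eq:e1} together with the volume identity $|B_A(x,2^k/R)|=2^{k\nu}R^{-\nu}\omega_d^A$, I would obtain
\[
|W^{1/p}(x)\phi(D)\vf(x)|\le CK\sum_{k\ge 0}2^{-k(M-\nu)}\mathcal{T}_k g(x),
\]
where $\mathcal{T}_k g(x):=\fint_{B_A(x,2^k/R)}\|W^{1/p}(x)W^{-1/p}(y)\|\,g(y)\dy$ is an averaging operator of Christ--Goldberg type adapted to $\cB_A$.

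To estimate $\mathcal{T}_k g$ in $L^p$, I would cover $\bR^d$ by a bounded-overlap family $\{B_{k,j}\}_j\subset\cB_A$ of balls of radius $2^k/R$, modeled on the construction \eqref{eq:Qk}--\eqref{eq:height}. For $x\in B_{k,j}$, the quasi-triangle inequality \eqref{windingroads} locates $B_A(x,2^k/R)$ inside a fixed enlarged ball $\widetilde B_{k,j}\in\cB_A$, so $\mathcal{T}_k g(x)\lesssim \fint_{\widetilde B_{k,j}}\|W^{1/p}(x)W^{-1/p}(y)\|\,g(y)\dy$. In the reflexive range $1<p<\infty$, H\"older's inequality in $y$ followed by the matrix $\mathbf{A}_p$-characterization \eqref{eq:Roudenko} on $\widetilde B_{k,j}$ (equivalently, Proposition \ref{prop:avg}) produces
\[
\int_{B_{k,j}}\mathcal{T}_k g(x)^p\dx\lesssim [W]_{\mathbf{A}_p(\cB_A)}^p\int_{\widetilde B_{k,j}}g(y)^p\dy,
\]
and summing over $j$ using the bounded overlap gives a uniform-in-$k$ estimate for $\|\mathcal{T}_k g\|_{L^p}$. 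In the quasi-Banach range $0<p\le 1$ Hölder's inequality is not available, so I would instead use the pointwise $\mathbf{A}_p$-condition \eqref{eq:mA1} combined with the doubling property of Lemma \ref{le:do} and the affine invariance from Lemma \ref{le:inv}; this produces a bound carrying a growth factor of the form $2^{k\beta/p}$ per scale, reflecting the doubling exponent $\beta=\max\{\nu,\nu p\}$ of $\|W\|^p\in A_{\max\{1,p\}}(\cB_A)$.

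Finally I would sum over $k$. For $p\ge 1$, Minkowski's inequality in $L^p$ yields a series of the form $\sum_k 2^{-k(M-\nu-(p-1)\beta)}$, convergent precisely when $M>\nu+(p-1)\beta$. For $0<p<1$, the $p$-subadditivity of $\|\cdot\|_{L^p}^p$ produces $\sum_k 2^{-k(pM-\nu-\beta)}$, convergent when $M>(\nu+\beta)/p$. The union of these two requirements is the threshold imposed in the hypothesis. The most delicate step is the uniform-in-$k$ bound for $\mathcal{T}_k g$ in the range $0<p\le 1$: the pointwise $\mathbf{A}_p$-condition \eqref{eq:mA1} controls only the $x$-average of $\|W^{1/p}(x)W^{-1/p}(y)\|^p$ for almost every $y$, and reconciling this averaged control with a pointwise-in-$x$ scale-$k$ estimate forces a careful iteration of the doubling estimate that picks up the exponent $\beta$; this iteration is ultimately responsible for the restrictive threshold $(\nu+\beta)/\min\{1,p\}$ appearing in the hypothesis.
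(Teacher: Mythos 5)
Your argument for the range $1<p<\infty$ is essentially sound and is a genuinely different route from the paper: the paper first reduces to the unit ball $B_A(0,1)$ via the affine invariance of Lemma \ref{le:inv} and then expands $\mathcal{F}^{-1}\phi*\vf$ in a sampling series, whereas you stay at scale $R$ and run an annuli/Schur-type decomposition against the per-scale averaging operators $\mathcal{T}_k$, controlled by H\"older plus \eqref{eq:Roudenko} on enlarged balls with bounded overlap. That per-scale estimate is in fact uniform in $k$, so your own summation should produce $\sum_k 2^{-k(M-\nu)}$ and would prove the $1<p<\infty$ case already for $M>\nu$; the factor $2^{-k(M-\nu-(p-1)\beta)}$ you quote is inconsistent with your uniform-in-$k$ claim, though harmless under the stated (stronger) hypothesis.

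The genuine gap is the quasi-Banach range $0<p\le 1$, which your plan cannot close as described. After the pointwise bound you have discarded band-limitedness and are asking for an $L^p$ bound of $\mathcal{T}_k$ acting on an arbitrary $g\in L^p$; but for $p<1$ even the unweighted operator $g\mapsto \fint_{B_A(x,2^k/R)}g\dy$ (i.e.\ $W=\mathrm{Id}$, which is in $\mathbf{A}_p(\cB_A)$) is unbounded on $L^p$ at every fixed scale: testing on $g_\epsilon=\epsilon^{-\nu/p}\mathbf{1}_{B_A(0,\epsilon)}$ gives $\|\mathcal{T}_k g_\epsilon\|_{L^p}\to\infty$ as $\epsilon\to 0$. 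So no iteration of the doubling estimate and no $k$-dependent loss of the form $2^{k\beta/p}$ can yield the per-scale bound you need; moreover, $p$-subadditivity is available for sums but not for integrals, so you cannot pull the $p$-th power inside $\mathcal{T}_k$ to invoke the pointwise condition \eqref{eq:mA1}. The missing idea — and the one the paper uses — is to exploit band-limitedness quantitatively through the Plancherel--Polya/sampling representation $(\mathcal{F}^{-1}\phi*\vf)(t)=\sum_{\ell\in\bZ^d}\vf(\ell+u)\,[\mathcal{F}^{-1}\phi](t-u-\ell)$ after reducing to $B=B_A(0,1)$ by Lemma \ref{le:inv}: the convolution becomes a discrete sum, $p$-subadditivity applies termwise, and averaging over the offset $u\in U_0$ recreates integrals of $|W^{1/p}(t)\vf(y)|^p$ over the cells $U_\ell$, to which \eqref{eq:mA1} and the doubling estimate can be applied cell-by-cell (this is Lemma \ref{le:help2}, and it is where the threshold $(\nu+\beta)/p$ actually comes from). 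Without this step your proposal does not prove the proposition for $0<p\le 1$.
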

\begin{proof}
Let us assume that the result holds in the special case $B_0=B_A(0,1)$. For  general $B=B_A(c,R)$, we can then consider the multiplier $\psi:B_A(0,1)\rightarrow\bC$ defined by
$\psi(\cdot)=\phi(\delta_{R}\cdot+c)$. By a direct calculation, we obtain
$$\mathcal{F}^{-1}(\psi)(x)={R}^{-\nu} e^{-ix\cdot \delta_{R}^{-1}c}\mathcal{F}^{-1}(\phi)(\delta_{R}^{-1}x).$$
Hence, 
\begin{align*}
|\mathcal{F}^{-1}(\psi)(x)|&\leq {R}^{-\nu} R^\nu K (1+R|\delta_{R}^{-1}x|_A)^{-M}\\
&=K \left(1+\frac{R}{R}|x|_A\right)^{-M}\\
&= K  (1+|x|_A)^{-M}.
\end{align*}
Now, notice that for any $\vf\in E_B$, we have 
\begin{equation}\label{eq:vg}
\vg:={R}^{-\nu} e^{-ix\cdot \delta_{R}^{-1}c}\vf(\delta_{R}^{-1}\cdot)\in E_{B_0},
\end{equation}
since
$$\mathcal{F}(\vg)(\cdot)=\hat{\vf}\big(\delta_{R}\cdot+c\big).$$
Also notice that the scalar factor $e^{-ix\cdot \delta_{R}^{-1}c}$  in \eqref{eq:vg} is unimodular, so we have $|\vg(\cdot)|=R^{-\nu}|\vf(\delta_{R}^{-1}\cdot)|$. Now, using the assumption that the result holds for the support set $B_0$, we obtain for $\vf\in E_R$,
\begin{align*}
\|\phi(D)\vf\|_{L^p(W)}^p&=\int_{\bR^d} |W^{1/p}(t)\phi(D)\vf(t)|^p\,dt\\
&={R}^{-\nu}\int_{\bR^d} |W^{1/p}(\delta_{R}^{-1}u)(\phi(D)\vf)(\delta_{R}^{-1}u)|^p\,du\\
&={R}^{\nu p-\nu}\int_{\bR^d} |W^{1/p}(\delta_{R}^{-1}u)(\psi(D)\vg)(u)|^p\,du\\
&\leq C {R}^{\nu p-\nu}\int_{\bR^d} |W^{1/p}(\delta_{R}^{-1}u)\vg(u)|^p\,du\\
&=C {R}^{-\nu}\int_{\bR^d} |W^{1/p}(\delta_{R}^{-1}u)\vf(\delta_{R}^{-1}u)|^p\,du\\
&=C \int_{\bR^d} |W^{1/p}(t)\vf(t)|^p\,dt,
\end{align*}
where $C:=C([W(\delta_{R}^{-1}\cdot)]_{\mathbf{A}_p(\cB_A)},\kappa,p)$ and we have repeatedly used that the scalar factor $e^{-ix\cdot \delta_{R}^{-1}c}$ appearing in \eqref{eq:vg} is unimodular. However, as Lemma \ref{le:inv} shows,  the matrix ${A}_p$-condition is invariant under affine transformations in the sense that $[W(\delta_{R}^{-1}\cdot)]_{\mathbf{A}_p(\cB_A)}=[W]_{\mathbf{A}_p(\cB_A)}$, making $C$ independent of $R$ and $c$. Hence, all that remains is to prove the result in the special case when $B=B_0$. 

Now let $B=B_0$. By assumption,
$$|\mathcal{F}^{-1}(\phi)(x)|\leq K \brac{x}_A^{-M},\qquad x\in\bR^d.$$
  For any $\vf\in E_{B_0}\cap  L^p(W)$, we notice that $\phi(D)\vf=(\mathcal{F}^{-1}\phi) * \vf$, and for $t,u\in \bR^d$, 
 using the inclusion \eqref{eq:scaling}, we may use the sampling representation (see, e.g.,  \cite[Lemma 6.10]{FrazJaweWeis:1991a}), $$(\mathcal{F}^{-1}\phi * \vf)(t)=\sum_{\ell\in \bZ^d} \vf(\ell+u)[\mathcal{F}^{-1}\phi](t-u-\ell),$$
Hence,
$$W^{1/p}(\delta_{R}^{-1}t)(\mathcal{F}^{-1}\phi * \vf)(t)=\sum_{\ell\in \bZ^d} W^{1/p}(\delta_{R}^{-1}t)\vf(\ell+u)[\mathcal{F}^{-1}\phi](t-u-\ell),$$
which implies that,
\begin{align*}
|W^{1/p}(\delta_{R}^{-1}t)(\mathcal{F}^{-1}\phi * \vf)(t)|&\leq \sum_{\ell\in \bZ^d} |W^{1/p}(\delta_{R}^{-1}t)\vf(\ell+u)[\mathcal{F}^{-1}\phi](t-u-\ell)|\\
&\leq c_M\sum_{\ell\in \bZ^d} |W^{1/p}(\delta_{R}^{-1}t)\vf(\ell+u)|\brac{t-u-\ell}_A^{-M}.
\end{align*}
The proof can now be completed by using Lemma \ref{le:help2} below with $V=W(\delta_{R}^{-1}\cdot)$.
\end{proof}
\begin{lemma}\label{le:help2}
Let $V\in\mathbf{A}_p(\cB_A)$ for some $0<p<\infty$ and put $\beta:=\max\{\nu,\nu p\}$. Suppose the estimate
\begin{align}\label{eq:start}
|V^{1/p}(t)(\mathcal{F}^{-1}\phi * \vf)(t)|\leq c_M\sum_{\ell\in \bZ^d} |V^{1/p}(t)\vf(\ell+u)|\brac{t-u-\ell}_A^{-M},\qquad t,u\in\bR^d,
\end{align}
holds uniformly for $\vf\in E_{B_A(0,1)}$  for some  $$M>\max\big\{\nu+\max\{0,p-1\}\beta,(\nu+\beta)/\min\{1,p\}\big\}.$$ Then
there exists $C:=C([V]_{\mathbf{A}_p(\cB_A)})$ such that
$$\|\mathcal{F}^{-1}\phi * \vf\|_{L^p(V)}\leq C\| \vf\|_{L^p(V)},$$
for $\vf\in E_{B_A(0,1)}\cap L^p(V)$.
\end{lemma}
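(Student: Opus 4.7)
The plan is to turn the pointwise bound \eqref{eq:start} into an $L^p(V)$-estimate through a sequence of reductions. The first move is to decouple the weight at $t$ from the weight at the sampling points by applying the matrix-norm bound $|V^{1/p}(t)\vf(\ell+u)|\le\|V^{1/p}(t)V^{-1/p}(\ell+u)\|\cdot|V^{1/p}(\ell+u)\vf(\ell+u)|$ inside the sum on the right-hand side of \eqref{eq:start}. Since the left-hand side is independent of $u$, I would then average over $u\in U_0=B_A(0,r_0)$, change variables $y=\ell+u$ so the sum over $\ell$ combined with the $u$-average becomes a sum of honest averages over the translates $U_\ell$, and use the quasi-triangle inequality to replace $\brac{t-u-\ell}_A$ by $\brac{t-\ell}_A$ up to multiplicative constants. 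The working form of the inequality is then
\[
|V^{1/p}(t)(\mathcal{F}^{-1}\phi*\vf)(t)|\lesssim\sum_{\ell\in\Zd}\brac{t-\ell}_A^{-M}\fint_{U_\ell}\|V^{1/p}(t)V^{-1/p}(y)\|\,|V^{1/p}(y)\vf(y)|\dy.
\]

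For $0<p\le 1$, I would take $p$th powers using $(\sum a_\ell)^p\le\sum a_\ell^p$, integrate in $t$, and swap sum and integral to reduce to controlling $I_\ell(y):=\int_{\bR^d}\brac{t-\ell}_A^{-Mp}\|V^{1/p}(t)V^{-1/p}(y)\|^p\dt$. By \eqref{eq:normy}, $I_\ell(y)$ is comparable to a finite sum of integrals of scalar weights $w_j(t):=|V^{1/p}(t)V^{-1/p}(y)\ve_j|^p$; these lie in $A_1(\cB_A)$ with uniform constant by Lemma~\ref{le:sc}(ii), so a dyadic-annulus decomposition of $\bR^d$ around $\ell$ combined with \eqref{eq:doubb2} (exponent $\nu$) yields $I_\ell(y)\lesssim|U_\ell|\fint_{U_\ell}\|V^{1/p}(t)V^{-1/p}(y)\|^p\dt$ once the geometric series converges. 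The $\mathbf{A}_p$-condition \eqref{eq:mA1} then gives $\fint_{U_\ell}\|V^{1/p}(t)V^{-1/p}(y)\|^p\dt\le[V]_{\mathbf{A}_p(\cB_A)}$ for a.e.\ $y\in U_\ell$, and the finite overlap \eqref{eq:height} of $\{U_\ell\}$ concludes the argument.

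For $p>1$ an additional H\"older step is needed. I would first apply H\"older with exponents $p',p$ to the inner average over $U_\ell$ and use the reducing-operator equivalence (obtained from \eqref{eq:normy} applied to $\|V^{-1/p}(y)V^{1/p}(t)\|^{p'}$) to get $\fint_{U_\ell}\|V^{1/p}(t)V^{-1/p}(y)\|\,|V^{1/p}(y)\vf(y)|\dy\lesssim\|V^{1/p}(t)A_{U_\ell}^{\#}\|\,G_\ell$, where $G_\ell:=\bigl(\fint_{U_\ell}|V^{1/p}(y)\vf(y)|^p\dy\bigr)^{1/p}$. Splitting $\brac{t-\ell}_A^{-M}=\brac{t-\ell}_A^{-s}\brac{t-\ell}_A^{-(M-s)}$ with $s>\nu/p'$, H\"older in $\ell$ (whose first factor is summable by \eqref{eq:integral}), raising to the $p$th power and integrating in $t$ yields
\[
\|\mathcal{F}^{-1}\phi*\vf\|_{L^p(V)}^p\lesssim\sum_\ell G_\ell^p\int_{\bR^d}\brac{t-\ell}_A^{-(M-s)p}\|V^{1/p}(t)A_{U_\ell}^{\#}\|^p\dt.
\]
A dyadic/doubling argument (using Lemma~\ref{le:sc}(i) and exponent $\beta=\nu p$) reduces the $t$-integral to $|U_\ell|\fint_{U_\ell}\|V^{1/p}(t)A_{U_\ell}^{\#}\|^p\dt$, and by \eqref{eq:normy} together with the defining equivalence for the reducing operator $A_{U_\ell}$ this is $\asymp|U_\ell|\,\|A_{U_\ell}A_{U_\ell}^{\#}\|^p\le C_p^p\,|U_\ell|$ by the Nazarov--Treil--Volberg equivalence \eqref{eq:reduc}. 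A final appeal to the finite overlap of $\{U_\ell\}$ closes the proof.

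The main obstacle is not any single step but coordinating the exponents: the hypothesized lower bound on $M$ is chosen precisely so that (i) the sums $\sum_\ell\brac{t-\ell}_A^{-sp'}$ are summable via \eqref{eq:integral}, and (ii) the geometric series from the dyadic/doubling step converges, which demands the effective radial exponent ($Mp$ when $p\le 1$, $(M-s)p$ when $p>1$) to strictly exceed $\beta=\max\{\nu,\nu p\}$, where $s$ may be taken arbitrarily close to $\nu/p'$. Matching these two constraints while accounting for the $p\le 1$ vs $p>1$ asymmetry between the two matrix $A_p$-conditions \eqref{eq:mA1} and \eqref{eq:Roudenko} is what makes the bookkeeping delicate.
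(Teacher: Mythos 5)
Your overall architecture is sound and close in spirit to the paper's proof (raise the pointwise bound to a power, average over $u\in U_0$, use doubling to transfer the $t$-integral back to $U_\ell$, then invoke the matrix $A_p$-condition), but there is a genuine ordering error in the case $0<p\le 1$ and a genuinely different (and slightly more efficient) route in the case $p>1$.

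For $0<p\le 1$, you decouple and average over $u$ first, obtaining the ``working form'' with $\fint_{U_\ell}$ already present, and only \emph{then} take $p$th powers. At that point you need, for fixed $t$ and $\ell$, an inequality of the type $\bigl(\fint_{U_\ell}g(y)h(y)\,\mathrm{d}y\bigr)^p\le\fint_{U_\ell}g(y)^p h(y)^p\,\mathrm{d}y$ in order to ``reduce to controlling $I_\ell(y)$''. Since $x\mapsto x^p$ is concave for $p\le 1$, Jensen's inequality gives the \emph{reverse} inequality, so as written the step fails. The fix (which the paper implements) is simply to take the $p$th power of \eqref{eq:start} \emph{before} averaging over $u$, so that the $u$-average already lands on $|V^{1/p}(t)\vf(\ell+u)|^p$; after decoupling and integrating in $t$ you then arrive exactly at $\sum_\ell\fint_{U_\ell}|V^{1/p}(y)\vf(y)|^p\,I_\ell(y)\,\mathrm{d}y$, and the rest of your dyadic argument goes through. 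So the idea is correct, but the presentation contains a real misordering that must be repaired.

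For $p>1$ your argument is correct but takes a different route from the paper's. You factor $\|V^{1/p}(t)V^{-1/p}(y)\|$ out of the inner average via Hölder, identify $\bigl(\fint_{U_\ell}\|V^{1/p}(t)V^{-1/p}(y)\|^{p'}\mathrm{d}y\bigr)^{1/p'}\asymp\|V^{1/p}(t)A_{U_\ell}^{\#}\|$, control the $t$-integral by doubling, and close via the reducing-operator formulation \eqref{eq:reduc} of the $A_p$-condition. The paper instead never introduces reducing operators in this lemma: it keeps the full double average $\fint_{U_k}\bigl(\fint_{U_k}\|V^{1/p}(t)V^{-1/p}(y)\|^{p'}\mathrm{d}y\bigr)^{p/p'}\mathrm{d}t$ and appeals directly to \eqref{eq:Roudenko}, after a block-by-block ($U_k\to U_\ell$) transfer rather than your dyadic annulus argument; the two transfer mechanisms are essentially equivalent. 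Your reducing-operator route is perfectly valid given \eqref{eq:reduc}, and in fact it closes under a weaker numerical condition ($M>\nu+\nu/p'$ rather than $M>\nu+\nu p(p-1)$), so it is, if anything, a slight improvement in the bookkeeping. Both fall within the hypothesis on $M$ in the lemma.

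There is also a small imprecision in the $0<p\le1$ dyadic step: you cite Lemma~\ref{le:sc}(ii) to place $w_j(t):=|V^{1/p}(t)V^{-1/p}(y)\ve_j|^p$ in $A_1(\cB_A)$, which is fine, but then you quote the doubling exponent as $\nu$; that is consistent with the $A_1$-membership, and with $\beta=\nu$ when $p\le 1$, so no harm done — just note that the convergence of the geometric series then requires $Mp>\nu$, which is guaranteed since $M>(\nu+\beta)/p=2\nu/p$.
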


\begin{proof}
Let us first consider the case $0<p\leq 1$. Equation~\eqref{eq:start} yields

\begin{align}\label{eq:start2}
|V^{1/p}(t)(\mathcal{F}^{-1}\phi * \vf)(t)|^p\leq c_M^p\sum_{\ell\in \bZ^d} |V^{1/p}(t)\vf(\ell+u)|^p\brac{t-u-\ell}_A^{-Mp},
\end{align}
Let  $U_k$ be given in \eqref{eq:Qk}. We now average the estimate \eqref{eq:start2} over $u\in U_0$, where we notice, using \eqref{windingroads},
$$1+|t-\ell|_A\leq 1+C_A(r_0+|t-u-\ell|_A)\leq C_A'(1+|t-u-\ell|_A).
$$
Hence, with $c:=c_M^p(C_A')^{Mp}/|U_0|$,
\begin{align}
|V^{1/p}(t)(\mathcal{F}^{-1}\phi * \vf)(t)|^p&\leq c\int_{U_0}\sum_{\ell\in \bZ^d} 
|V^{1/p}(t)\vf(\ell+u)|^p\brac{t-\ell}_A^{-Mp}\du\nonumber\\
&= c\sum_{\ell\in \bZ^d} \brac{t-\ell}_A^{-Mp} \int_{U_0}
|V^{1/p}(t)\vf(\ell+u)|^p\du\nonumber\\
&= c\sum_{\ell\in \bZ^d} \brac{t-\ell}_A^{-Mp} \int_{U_\ell}
|V^{1/p}(t)\vf(y)|^p\dy,\label{eq:esti2}
\end{align}
where we have used Tonelli's theorem.   By the doubling condition \eqref{eq:doubb2} satisfied by the scalar weight $v_{\vw}(t):=|V^{1/p}(t)\vw|^p\in A_{\max\{1,p\}}(\cB_A)$, with constant independent of $\vw\in\bC^N$, there exists a finite constant $c_w$ such that for any $k,\ell\in\bZ^d$, and $y\in\bR^d$,
\begin{equation}\label{eq:doub}
\int_{U_k} |V^{1/p}(t)\vf(y)|^p\,dt\leq c_w\brac{k-\ell}_A^\beta\int_{U_\ell} |V^{1/p}(t)\vf(y)|^p\,dt,
\end{equation} 
with $\beta=\max\{\nu,\nu p\}$,
which follows from the observation that $$U_k\subseteq B_A\big(\ell,C_A(r_0+|k-\ell|_A)\big)\subseteq B_A\big(\ell,b(1+|k-\ell|_A)\big),$$
with $b=\max\{C_A,C_Ar_0\}$.

For $k\in\bZ^d$, we integrate inequality \eqref{eq:esti2} over $t\in U_k$, using Tonelli's theorem once more together with the estimate \eqref{eq:doub}, and the observation that there exists $c>0$ such that for all $t\in U_k$ and $\ell\in \bZ^d$,
$1+|k-\ell|_A\leq c(1+|t-\ell|_A)$,
\begin{align*}
\int_{U_k} |V^{1/p}(t)(\phi(D)\vf)(t)|^p\dt&\leq c_wc
 \int_{U_k}\sum_{\ell\in \bZ^d} \brac{k-\ell}_A^{-Mp} \int_{U_\ell}
|V^{1/p}(t)\vf(y)|^p\dy\dt\\
&\leq c'\sum_{\ell\in \bZ^d} \brac{k-\ell}_A^{-Mp+\beta} \int_{U_\ell}\int_{U_\ell}
|V^{1/p}(t)\vf(y)|^p\dt\dy,
\end{align*}
with $c':=c_wc$. In the following estimate, the matrix $A_p$-condition \eqref{eq:mA1} for $V$ will be essential. We first notice that by \eqref{eq:height}, $\{U_k\}_k$ covers $\bR^d$ and using the assumption that $Mp-\beta>\nu$ we may use \eqref{eq:integral} to define a finite constant $L:=\sum_{k\in\bZ^d}\brac{k}_A^{-Mp+\beta}<\infty$. We then have,

\begin{align*}
\|\phi(D)\vf\|_{L^p(V)}^p&\leq \sum_{k\in \bZ^d} \int_{U_k} |V^{1/p}(t)(\phi(D)\vf)(t)|^p\dt\\
&\leq c'\sum_{k\in \bZ^d}\sum_{\ell\in \bZ^d} \brac{k-\ell}_A^{-Mp+\beta}\int_{U_\ell}\int_{U_\ell}
|V^{1/p}(t)\vf(y)|^p\dt\dy\\
&=Lc'\sum_{\ell\in \bZ^d} \int_{U_\ell}\int_{U_\ell}
|V^{1/p}(t)\vf(y)|^p\dt\dy\\
&=Lc'\sum_{\ell\in \bZ^d} \int_{U_\ell}\int_{U_\ell}
|V^{1/p}(t)V^{-1/p}(y)V^{1/p}(y)\vf(y)|^p\dt\dy\\
&\leq Lc'\sum_{\ell\in \bZ^d}\int_{U_\ell}\bigg( \int_{U_\ell}
\|V^{1/p}(t)V^{-1/p}(y)\|^p\dt\bigg) |V^{1/p}(y)\vf(y)|^p\dy\\
&= L|U_0|c'\sum_{\ell\in \bZ^d}\int_{U_\ell}\bigg( \frac{1}{|U_\ell|}\int_{U_\ell}
\|V^{1/p}(t)V^{-1/p}(y)\|^p\dt\bigg) |V^{1/p}(y)\vf(y)|^p\dy\\
&\leq L|U_0|c'  [V]_{\mathbf{A}_p(\cB_A)} \sum_{\ell\in \bZ^d}\int_{U_\ell}|V^{1/p}(y)\vf(y)|^p\dy\\
&\leq LN|U_0| c_M'  [W]_{\mathbf{A}_p(\cB_A)}\|\vf\|_{L^p(V)}^p\\
&=C^p \|\vf\|_{L^p(V)}^p,
\end{align*}
with $C^p:=Ln_0|U_0| c'[V]_{\mathbf{A}_p(\cB_A)}$, where we used \eqref{eq:height} for the final inequality. This completes the proof of the lemma in the case  $0<p\leq 1$. 

Let us now consider $1<p<\infty$ and solely focus on the modifications needed from the previous case. The same type of estimate as used in \eqref{eq:esti2}, and averaging over $u\in U_0$, leads to
\begin{align*}
|V^{1/p}(t)(\mathcal{F}^{-1}\phi * \vf)(t)|&\leq
\frac{c}{|U_0|}\sum_{\ell\in \bZ^d} \brac{t-\ell}_A^{-M} \int_{U_\ell}
|V^{1/p}(t)\vf(y)|\dy,
\end{align*}

By writing $M=M/p+M/p'$, and applying the discrete H\"older inequality to this estimate, we obtain
\begin{align*}
|V^{1/p}(t)(\mathcal{F}^{-1}\phi * \vf)(t)|^p&\leq 
 \frac{c'}{|U_0|}\sum_{\ell\in \bZ^d}  \bigg(\int_{U_\ell}
|V^{1/p}(t)\vf(y)|\dy\bigg)^p\brac{t-\ell}_A^{-M},
\end{align*}
where we have used the estimate \eqref{eq:integral} together with the assumption that $M>\nu$. We have, using H\"older's inequality,
\begin{align*}
    \bigg(\int_{U_\ell}
|V^{1/p}(t)\vf(y)|\dy\bigg)^p&\leq \bigg(\int_{U_\ell}
\|V^{1/p}(t)V^{-1/p}(y)\|\cdot|V^{1/p}(y)\vf(y)|\dy\bigg)^p\\
&\leq \bigg(\int_{U_\ell}
\|V^{1/p}(t)V^{-1/p}(y)\|^{p'}\dy\bigg)^{p/p'}
\int_{U_\ell}
|V^{1/p}(y)\vf(y)|^p\dy.
\end{align*}
Now, by Lemma \ref{le:do}, $v_x(y):=\|V^{1/p}(t)V^{-1/p}(y)\|^{p'}\in A_{p}(\cB_A)$ is doubling, so similar to estimate \eqref{eq:doub}, we obtain for $\beta=\nu p$,
\begin{equation}\label{eq:doub2}
\int_{U_\ell} \|V^{1/p}(t)V^{-1/p}(y)\|^{p'}\,dy\leq c_w\brac{k-\ell}_A^\beta\int_{U_k} \|V^{1/p}(t)V^{-1/p}(y)\|^{p'}\,dy.
\end{equation}
This brings us to the estimate, 
\begin{align*}
\|\phi(D)\vf\|_{L^p(V)}^p&\leq \sum_{k\in \bZ^d} \int_{U_k} |V^{1/p}(t)(\phi(D)\vf)(t)|^p\dt\\
&\leq \frac{c'}{|U_0|} \sum_{k\in \bZ^d}\sum_{\ell\in \bZ^d} \int_{U_k}\bigg(\int_{U_\ell}
\|V^{1/p}(t)V^{-1/p}(y)\|^{p'}\,dy\bigg)^{p/p'}\dt\\
&\qquad\qquad\times \brac{k-\ell}_A^{-M}\int_{U_\ell}
|V^{1/p}(y)\vf(y)|^p\dy\\
&\leq c'|U_0|^{p/p'} \sum_{k\in \bZ^d}\sum_{\ell\in \bZ^d} \bigg[\int_{U_k}\bigg(\int_{U_k}
\|V^{1/p}(t)V^{-1/p}(y)\|^{p'}\,\frac{\d y}{|U_k|}\bigg)^{p/p'}\,\frac{\dt}{|U_k|}\bigg]\\
&\qquad\qquad\times \brac{k-\ell}_A^{-M+\beta\frac{p}{p'}}\int_{U_\ell}
|V^{1/p}(y)\vf(y)|^p\dy\\
&\leq c'|U_0|^{p/p'} [V]_{\mathbf{A}_p(\cB_A)}^p \sum_{\ell\in \bZ^d}\int_{U_\ell}
|V^{1/p}(y)\vf(y)|^p\dy\\
&\leq  C^p\|\vf\|_{L^p(V)}^p,
\end{align*}
with $C^p:=c'n_0|U_0|^{p/p'} [V]_{\mathbf{A}_p(\cB_A)}^p$, where we have used \eqref{eq:Roudenko}, the assumption that $M-\beta\frac{p}{p'}=M-\beta(p-1)>\nu$, and $\eqref{eq:height}$ for the final estimate. This completes the proof of the lemma.
\end{proof}

\begin{remark}
In the isotropic setup with $A=\text{Id}_{d\times d}$, the author proved a version of Proposition \ref{prop:main} in \cite{Niel:2025a} valid in the restricted case $0<p<1$. 
\end{remark}

Next we consider a related sampling result in the matrix-weighted case. The result provides a generalization of a result obtained in the isotropic setup by Roudenko \cite{Roud:2003a} and Frazier and Roudenko \cite{FrazRoud:2004a}. The sampling grid appearing in the result is (in general) nonuniform, reflecting an adaptation to the geometry specified by the one-parameter dilation group.  

\begin{proposition}\label{prop:samp}
    Let $0<p<\infty$ and suppose $W\in\mathbf{A}_p(\cB_A)$. Let $B=B_A(c,R)\in\cB_A$,  put $T=\delta_{R}\cdot+c$. 
Then there exists a constant $c_{p,d}$ such that for $\vg\in E_B\cap L^p(W)$,
$$\sum_{\ell\in \bZ^d}\int_{U_{(B,\ell)}}\big|W^{1/p}(x)\vg\big(\delta_{R}^{-1}\ell\big)\big|^p\,dx\leq c_{p,d}\|\vg\|_{L^p(W)}^p,$$
with $U_{(B,\ell)}:=\delta_{R}^{-1} U_\ell$, where $U_\ell$ is defined in \eqref{eq:Qk}.
\end{proposition}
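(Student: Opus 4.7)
The plan is to reduce the statement to the case $B = B_0 = B_A(0,1)$ via the affine invariance from Lemma \ref{le:inv}, and then run a sampling argument closely paralleling the proof of Lemma \ref{le:help2}. Following the reduction in the proof of Proposition \ref{prop:main}, I set $\mathbf{h}(u) := R^{-\nu} e^{-iu\cdot \delta_R^{-1}c}\vg(\delta_R^{-1}u)$ and $V(y) := W(\delta_R^{-1}y)$. Then $\mathbf{h} \in E_{B_0}$, the unimodular factor gives $|V^{1/p}(y)\vg(\delta_R^{-1}\ell)| = R^\nu |V^{1/p}(y)\mathbf{h}(\ell)|$, and by Lemma \ref{le:inv} one has $[V]_{\mathbf{A}_p(\cB_A)} = [W]_{\mathbf{A}_p(\cB_A)}$. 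A direct change of variable $x = \delta_R^{-1}y$ (using $U_{(B,\ell)} = \delta_R^{-1} U_\ell$) shows that both sides of the desired inequality equal $R^{\nu(p-1)}$ times the corresponding unit-ball quantity, so the claim reduces to proving
$$\sum_{\ell\in\bZ^d}\int_{U_\ell} |V^{1/p}(y)\mathbf{h}(\ell)|^p\dy \leq C\|\mathbf{h}\|_{L^p(V)}^p$$
uniformly for $V \in \mathbf{A}_p(\cB_A)$ and $\mathbf{h} \in E_{B_0}\cap L^p(V)$.

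Next I would invoke a band-limited reproducing identity to recover the samples from $\mathbf{h}$. By the normalization \eqref{eq:scaling}, $\supp(\hat{\mathbf{h}}) \subseteq B_A(0,1) \subset (-\pi,\pi)^d$, so one may fix a Schwartz function $\varphi \in \cS(\bR^d)$ with $\hat\varphi \equiv 1$ on $B_A(0,1)$ and compact support in $(-\pi,\pi)^d$. This yields the identity $\mathbf{h}(\ell) = \int \mathbf{h}(u)\varphi(\ell-u)\du$, and since $\varphi \in \cS$, its decay is faster than any polynomial; for any $M > 0$, $|\varphi(z)| \leq c_M \brac{z}_A^{-M}$ by \eqref{natasha}. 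Inserting $V^{-1/p}(u)V^{1/p}(u)$, partitioning $\bR^d = \bigcup_k U_k$, and using \eqref{windingroads} to pass from $\brac{\ell-u}_A$ to $\brac{\ell-k}_A$ when $u \in U_k$, one arrives at the key pointwise bound
$$|V^{1/p}(y)\mathbf{h}(\ell)| \leq c_M \sum_{k\in\bZ^d}\brac{\ell-k}_A^{-M} \int_{U_k} \|V^{1/p}(y)V^{-1/p}(u)\|\cdot |V^{1/p}(u)\mathbf{h}(u)|\du$$
valid for every $y \in U_\ell$.

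From here the argument runs in close parallel to the final part of the proof of Lemma \ref{le:help2}. For $0 < p \leq 1$, I would raise the bound to the $p$-th power and use subadditivity; for $1 < p < \infty$, I would split $M = M/p + M/p'$ and apply H\"older's inequality. Integrating over $y \in U_\ell$, summing over $\ell$, and applying the doubling estimates from Lemma \ref{le:do} (to the weights $|V^{1/p}(\cdot)\vw|^p$ and, when $p > 1$, also to $\|V^{1/p}(y)V^{-1/p}(\cdot)\|^{p'}$) lets one re-center all averages from $U_k$ to $U_\ell$ at the cost of a polynomial factor in $\brac{\ell-k}_A$. A final application of the matrix $A_p$ condition in the form \eqref{eq:mA1} or \eqref{eq:Roudenko}, combined with the bounded-overlap property \eqref{eq:height}, collapses the double sum and yields the required estimate. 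The main technical obstacle is the bookkeeping: $M$ must be chosen large enough that after absorbing the doubling exponents ($\nu$ or $\nu p$) and the $p-1$ factor from H\"older, the resulting series $\sum_k \brac{\ell-k}_A^{-M'}$ is summable via \eqref{eq:integral}. Since $\varphi \in \cS$ permits arbitrarily large $M$, no genuine obstruction arises.
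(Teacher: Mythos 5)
Your reduction to the unit ball $B_0=B_A(0,1)$ via the affine substitution and Lemma \ref{le:inv} is exactly the paper's first step (the scaling bookkeeping with $R^{\nu(p-1)}$ checks out), and for $1<p<\infty$ your unit-scale argument — insert $V^{-1/p}(u)V^{1/p}(u)$, split $M=M/p+M/p'$, apply H\"older, re-center with the doubling of $v_y(u)=\|V^{1/p}(y)V^{-1/p}(u)\|^{p'}$ from Lemma \ref{le:do}, and finish with \eqref{eq:Roudenko} and \eqref{eq:height} — is essentially the second half of the paper's Lemma \ref{le:help2} and is sound. The problem is the case $0<p\leq 1$. There, ``raise the bound to the $p$-th power and use subadditivity'' only handles the discrete sum over $k$; it cannot move the exponent $p$ inside the integral over $U_k$. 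For $p<1$ there is no inequality of the form $\bigl(\int_{U_k} f\,\du\bigr)^p\leq C\int_{U_k} f^p\,\du$ (concentrate $f$ on a small subset of $U_k$: the left side stays of order one while the right side tends to zero), so your key pointwise bound cannot be converted into the $p$-th power estimate you need. Moreover, even if one could pass the power inside, the resulting quantity $\fint_{U_k}\|V^{1/p}(y)V^{-1/p}(u)\|^{p}\du$ has the weight evaluated at the \emph{fixed} point $y$ in the left factor and integrated in the right factor, which is not what \eqref{eq:mA1} controls (that condition fixes the right factor and averages the left), and for $p\leq 1$ there is no dual-weight condition available to swap the roles.

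The paper avoids both issues by a different device: instead of the continuous reproducing identity $\mathbf{h}(\ell)=\int \mathbf{h}(u)\varphi(\ell-u)\,\du$, it uses the discrete sampling representation of \cite[Lemma 6.10]{FrazJaweWeis:1991a} applied to the shifted functions $\vf(\cdot+u)$, giving $\vf(k)=\sum_{\ell}\vf(\ell+u)\gamma(k-u-\ell)$ for \emph{every} $u\in\bR^d$. This produces a pointwise bound by a discrete sum of point samples with a free shift parameter $u$; discrete $p$-subadditivity is then legitimate, and averaging over $u\in U_0$ converts $|V^{1/p}(t)\vf(\ell+u)|^p$ into $\int_{U_\ell}|V^{1/p}(t)\vf(y)|^p\,\dy$, after which both variables sit over the same cube and \eqref{eq:mA1} applies in the correct order (this is Lemma \ref{le:help1}, proved by adapting Lemma \ref{le:help2}). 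Your route could be repaired by first proving a Plancherel--P\'olya type majorant inequality $|V^{1/p}(y)\mathbf{h}(\ell)|^p\leq c\int\brac{\ell-u}_A^{-M}|V^{1/p}(y)\mathbf{h}(u)|^p\,\du$ for band-limited functions, but that inequality is itself usually established by precisely the sampling-plus-averaging argument you omitted, so as written the proposal has a genuine gap for $0<p\leq 1$.
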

\begin{proof}
 Given $\vg\in E_B\cap L^p(W)$, and as noted earlier in \eqref{eq:vg}, the modified function 
 \begin{equation}\label{eq:unim}
 \tilde{\vg}(x):=e^{-ix\cdot \delta_{R}^{-1}c}\vg\big(\delta_{R}^{-1}x\big)
 \end{equation}
 satisfies that $\text{supp}(\hat{{\tilde{\vg}}})\subseteq B_0:=B_A(0,1)$.
Now, by a change of variable, 
\begin{align*}\int_{U_{(B,\ell)}}\big|W^{1/p}(x){\tilde{\vg}}\big(\ell\big)\big|^p\,dx&\asymp {R}^{-\nu}\int_{U_\ell}\bigg|W^{1/p}\bigg(\delta_{R}^{-1} u\bigg){\tilde{\vg}}\big(\ell\big)\bigg|^p\,du
\end{align*}
Let us suppose there exists a constant $c_{p,d}:=c_{p,d}([W(\delta_{R}^{-1}\cdot)]_{\mathbf{A}_p(\cB_A)})$ such that for
$\tilde{\vg}\in E_{B_0}$,
\begin{equation}\label{eq:fund}
\sum_{\ell\in \bZ^d}\int_{U_\ell}\bigg|W^{1/p}\bigg(\delta_{R}^{-1}x\bigg){\tilde{\vg}}\big(\ell\big)\bigg|^p\,dx\leq  c_{p,d}\|{\tilde{\vg}}\|_{L^p(W(\delta_{R}^{-1} \cdot))}^p.
\end{equation}
We then apply another change of variable, using that the scalar function $e^{-ix\cdot \delta_{R}^{-1}c}$ in \eqref{eq:unim} is unimodular,
\begin{align*}
\sum_{\ell\in \bZ^d}\int_{U_{(B,\ell)}}\big|W^{1/p}(x)\vg\big(\delta_{R}^{-1}\ell\big)\big|^p\,dx&\asymp
{R}^{-\nu}\sum_{\ell\in \bZ^d}\int_{U_\ell}\big|W^{1/p}\big(\delta_{R}^{-1}x\big){\tilde{\vg}}\big(\ell\big)\big|^p\,dx\\&\leq  c_{p,d}{R}^{-\nu}\|{\tilde{\vg}}\|_{L^p(W(\delta_{R}^{-1}\cdot))}^p\\
&=c_{p,d}\|\vg\|_{L^p(W)}^p.
\end{align*}
Now, by Lemma \ref{le:inv}, $[W(\delta_{R}^{-1}\cdot)]_{\mathbf{A}_p(\cB_A)}=[W]_{\mathbf{A}_p(\cB_A)}$ and $c_{p,d}$ is therefore independent of $B$. Hence, the proof has been reduced to proving the validity of \eqref{eq:fund}. Let $\vf\in E_{B_0}$ and take a scalar function $\phi\in\dS$ satisfying $\hat{\phi}|_{B_0}\equiv 1$, while $\text{supp}(\hat{\phi})\in \{z\in\bR^d:|z|<3\}$. Clearly, $\vf=\vf*\phi$ and we obtain the sampling representation, see \cite[Lemma 6.10]{FrazJaweWeis:1991a},
$$\vf(x)=\sum_{\ell\in \bZ^d} \vf(\ell)\gamma(x-\ell).$$
However, for any $u\in\bR^d$, $\vf^u(x):=\vf(x+u)\in E_{B_0}$, so the sampling representation  with $x=k-u$ yields
$$\vf^u(k-u)=\vf(k)=\sum_{\ell\in \bZ^d} \vf(\ell+u)\gamma(k-u-\ell).$$
Hence,
$$W^{1/p}(\delta_{R}^{-1}t)\vg(k)=\sum_{\ell\in \bZ^d} W^{1/p}(\delta_{R}^{-1}t)\vg(\ell+u)\gamma(k-u-\ell),$$
so
\begin{align*}
|W^{1/p}(\delta_{R}^{-1}t)\vg(k)|&\leq \sum_{\ell\in \bZ^d} |W^{1/p}(\delta_{R}^{-1}t)\vg(\ell+u)|\cdot |\gamma(k-u-\ell)|\\
&\leq  c_M\sum_{\ell\in \bZ^d} |W^{1/p}(\delta_{R}^{-1}t)\vg(\ell+u)|\cdot \brac{k-u-\ell}_A^{-M}
\end{align*}
for any $M>0$. The proof of \eqref{eq:fund} can now be completed by using Lemma \ref{le:help1} with $V=W(\delta_{R}^{-1}\cdot)$.
\end{proof}

\begin{lemma}\label{le:help1}
Let $V\in \mathbf{A}_p(\cB_A)$ for some $0<p<\infty$. Suppose for $\vg\in E_{B_A(0,1)}$ we have a uniform estimate
    \begin{align*}
|V^{1/p}(t)\vg(k)|&\leq c_M \sum_{\ell\in \bZ^d} |V^{1/p}(t)\vg(\ell+u)|\cdot \brac{k-u-\ell}_A^{-M},
\end{align*}
for any $M>0$, $k\in\bZ^d$, and $u\in\bR^d$. Then 
$$\sum_{\ell\in \bZ^d}\int_{U_{\ell}}\big|V^{1/p}(x)\vg\big(\ell\big)\big|^p\dx\leq c_{p,d}\|\vg\|_{L^p(V)}^p.$$
\end{lemma}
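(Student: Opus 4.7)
The plan is to mirror the proof of Lemma \ref{le:help2}, but without the convolution structure. The key idea is the familiar $u$-averaging trick: for each fixed $k\in\bZ^d$, the pointwise value $|V^{1/p}(t)\vg(k)|$ on $U_k$ can be bounded by integrals of $|V^{1/p}(t)\vg(y)|$ over the cells $U_\ell$, by averaging the given sampling-type estimate over $u\in U_0$ and substituting $y=\ell+u\in U_\ell$.

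First I would split into two cases. For $0<p\leq 1$, I raise the hypothesis to the $p$-th power using $p$-subadditivity and average over $u\in U_0$. The change of variables $y=\ell+u$ rewrites the right side as a sum of integrals over the $U_\ell$, and the quasi-triangle inequality \eqref{windingroads} gives $\brac{k-y}_A\asymp \brac{k-\ell}_A$ for $y\in U_\ell$. Integrating over $t\in U_k$ and applying the doubling property of the scalar weight $v_{\vg(y)}(t)=|V^{1/p}(t)\vg(y)|^p\in A_{\max\{1,p\}}(\cB_A)$ (Lemma \ref{le:sc} together with \eqref{eq:doubb2}) converts $\int_{U_k}$ into $\int_{U_\ell}$ at the cost of a factor $\brac{k-\ell}_A^{\beta}$ with $\beta=\max\{\nu,\nu p\}$. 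Summing over $k$ and choosing $M$ with $Mp-\beta>\nu$ makes $\sum_k\brac{k-\ell}_A^{-Mp+\beta}$ summable by \eqref{eq:integral}. Finally, the matrix $A_p$-condition \eqref{eq:mA1} allows one to factor out $|V^{1/p}(y)\vg(y)|^p$ from the double integral $\int_{U_\ell}\int_{U_\ell}|V^{1/p}(t)V^{-1/p}(y)V^{1/p}(y)\vg(y)|^p\,\d t\,\d y$, and the bounded-overlap bound \eqref{eq:height} closes the argument.

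For $1<p<\infty$, I follow the same averaging and change-of-variables step without raising to the $p$-th power. Splitting the decay factor as $M=M/p+M/p'$ and applying discrete H\"older (valid once $M>\nu$ so that \eqref{eq:integral} is in force) lets me take the $p$-th power inside. An application of H\"older's inequality in the inner integral factors $|V^{1/p}(t)V^{-1/p}(y)|$ from $|V^{1/p}(y)\vg(y)|$. After integrating over $t\in U_k$, the weight $v_x(y)=\|V^{1/p}(t)V^{-1/p}(y)\|^{p'}$ belongs to $A_p(\cB_A)$ by Lemma \ref{le:do}(iii), so its doubling property converts $\int_{U_\ell}$ to $\int_{U_k}$ with a factor $\brac{k-\ell}_A^{\beta}$, $\beta=\nu p$. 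Then \eqref{eq:Roudenko} bounds the resulting local average by $|U_0|^{p/p'+1}[V]_{\mathbf{A}_p(\cB_A)}^p$. Summing over $k$ (requiring $M>\nu+\beta(p-1)$, again available since the hypothesis permits any $M$) and applying \eqref{eq:height} yields the claim.

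The main technical obstacle is bookkeeping: one has to keep track of two distinct doubling steps (one on the sample-side weight $v_{\vg(y)}$ or $v_x$ and one for the bounded-overlap $\{U_\ell\}$) while choosing $M$ large enough that both the convergence of the $\brac{k-\ell}_A$-series and the summability of the weighted sum over $k$ hold simultaneously. Since the assumption gives the estimate \emph{for every} $M>0$, this is only a matter of selection, not a genuine constraint, so in contrast to Lemma \ref{le:help2} no sharp lower bound on $M$ needs to be stated in the conclusion.
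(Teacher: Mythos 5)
Your proposal is correct and takes the same route the paper intends: the paper's "proof" is a one-line remark that Lemma \ref{le:help2} can be adapted, and your outline executes exactly that adaptation, including the $u$-averaging, the change of variables $y=\ell+u$, the two doubling steps (on $v_{\vg(y)}$ for $0<p\leq 1$ and on $v_x(y)=\|V^{1/p}(t)V^{-1/p}(y)\|^{p'}$ for $1<p<\infty$), the matrix $A_p$-condition \eqref{eq:mA1}/\eqref{eq:Roudenko}, and the bounded-overlap closure \eqref{eq:height}. Your final observation that the hypothesis already grants all $M>0$ so no lower bound on $M$ appears in the conclusion is also correct and explains the difference in the statements of the two lemmas.
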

The proof of Lemma \ref{le:help2} can easily be adapted to provide a proof of Lemma \ref{le:help1}. We leave the details to the reader.

\section{Anisotropic matrix-weighted Besov spaces}\label{sec:B}
In this section we present an application of the Fourier multiplier and sampling result from the previous section. Specifically, we build anisotropic matrix-weighted Besov spaces providing a generalization of the matrix-weighted Besov spaces introduced by Roudenko \cite{Roud:2003a} and Frazier and Roudenko \cite{FrazRoud:2004a}. 

We first review some  known results on structured coverings of $\bR^d$ by  anisotropic balls that will be needed for the construction. We refer the reader to \cite{BoruNiel:2008a} for further details and proofs. 

Let $|\cdot|_A$ be a quasi-distance as in Definition \ref{def:A} associated with the one-parameter group of dilations $\delta_t$, $t>0$. 
For positive $c>0$, consider the family $\cC_c:=\big\{B_A\big(\zeta,c\brac{\zeta}_A\big)\big\}$. 
One can verify that there is a constant $0<\eta_0\leq 1$ such that for any $0<c\leq \eta_0$ there 
exists a  corresponding countable family
$\cC_c:=\big\{B_A\big(\zeta_j,c\brac{\zeta_j}_A)\big)\big\}_{j\in\bN}$ satisfying
\begin{itemize}
    \item $\cC_c$ covers $\bR^d$ and is of finite height, i.e., for some fixed $L:=L(c)> 1$,
     $$1\leq \sum_{j=1}^\infty \mathbf{1}_{B_A(\zeta_j,c\brac{\zeta_j})}(x)\leq L,\qquad x\in\bR^d$$
    \item there exists $c':=c'(c)<c$ such that the scaled sets $\big\{B_A\big(\zeta_j,c'\brac{\zeta_j}_A)\big)\big\}_{j\in\bN}$ are \textit{pairwise disjoint}.
\end{itemize}
Given $\cC_c$, one can always define an associated family $\{T_j\}$ of affine transformations by letting $T_j:=\delta_{t_j}\cdot+\zeta_j$ with $t_j:=\brac{\zeta_j}_A$ generating the covering, $\cC_c=\{T_jB_A(0,c)\}_j$. We refer to such $\cC_c$ as a \textit{structured covering} of $\bR^d$.

For any two  structured coverings $\cC_{\beta_1}$ and $\cC_{\beta_1}$, 
\begin{itemize}
    \item there exists a constant $C:=C(\beta_1,\beta_2)$ such that for any $B\in \cC_{\beta_1}$,
     \begin{equation}\label{eq:FIP}
     \#\{P\in \cC_{\beta_2}: B\cap P\not=\emptyset\}\leq C
     \end{equation}
\item     there is a constant $R=R(\beta_1,\beta_2)\geq 1$ such that whenever $B\in\cC_{\beta_1}$ and $P\in\cC_{\beta_2}$ satisfy $B\cap P\not=\emptyset$, we have
     \begin{equation}\label{eq:ratio}
     R^{-1}\leq \frac{\brac{\zeta}_A}{\brac{\gamma}_A}\leq R,\qquad \zeta\in B, \gamma\in P.
     \end{equation}
\end{itemize}

All the coverings $\cC_c$  in fact have a dyadic structure in the sense that it can be verified that there exists a  uniform $M:=M(c)>0$ such that  
$$\#\big\{\{B:B\in\cC_c\}: B \cap \{\zeta\in\bR^d: 2^j\leq |\zeta|_A<2^{j+1}\}\not=\emptyset\big\}\leq M,\qquad j\in\bN.$$
We refer to \cite[Section 4]{BoruNiel:2008a} for further details on structured coverings.

We say that $\Phi_{\cC_c}:=\{\phi_j\}_{j\in\bN}$ is a \textit{bounded  admissible partition of unity} (BAPU) subordinate to $\cC_c=\big\{B_A\big(\zeta_j,c\brac{\zeta_j}_A)\big)\big\}_{j\in\bN}$ if it is a $C^\infty$ resolution of the identity on $\bR^d$,  i.e., $\sum_j \phi_j(x)\equiv 1$ on $\bR^d$, such that  $\text{supp}(\phi_j)\subseteq B_A(\zeta_j,c\brac{\zeta_j}_A)$, $j\in\bN$. We also suppose that for any $M>0$ there exists $C:=C(\Phi,M,c)$ such that with $\tau_j:=\brac{\zeta_j}_A$,
\begin{equation}\label{eq:dec}|\mathcal{F}^{-1}\phi_j(x)|\leq C\tau_j^\nu(1+\tau_j|x|_A)^{-M},\qquad j\in\bN.
\end{equation}

Let us now pick $c_0$ such that $0<c_0\leq \eta_0/2$ and put
$c_1:=2c_0\leq \eta_0\leq 1$. Let
\[
\cC_{c_0}=\big\{B_A\big(\xi_j,c_0\brac{\xi_j}_A)\big)\big\}_{j\in\bN}
\]
be an associated covering. We let
\begin{equation}\label{eq:cF}
\cC:=\big\{B_A\big(\xi_j,c_1\brac{\xi_j}_A)\big)\big\}_{j\in\bN}.
\end{equation}

A BAPU can easily be constructed for $\mathcal{F}$. Pick a nonnegative $g\in
C^\infty(\bR^d)$  with $\supp(g)\subset {B}_A(0,2c_0)$ and $g(\xi)=1$
for $\xi\in {B}_A(0,c_0)$. Then, with $T_j:=\delta_{t_j}\cdot+\xi_j$, where $t_j:=\brac{\xi_j}_A$, 
we let  
\begin{equation}
  \label{eq:bapu_def}
  \phi_j:=\frac{g(T_j^{-1}\cdot)}{\sum_{k\in\bN} g(T_k^{-1} \cdot)}\in\dS,
\end{equation}
which  defines an
associated BAPU. In a similar fashion, 
 \begin{equation}
   \label{eq:bapusq}
   \psi_j:=
\frac{g(T_j^{-1}\cdot)}{\sqrt{\sum_{k\in\bN} g^2(T_k^{-1} \cdot)}}\in \dS,
 \end{equation}
 defines a smooth
``square root'' of the BAPU in the sense that $\sum_j \psi_j^2(x)\equiv 1$. The system $\{\psi_j\}$ also satisfies 
\eqref{eq:dec}. We again refer to \cite{BoruNiel:2008a} for further details.

We now follow the approach in Roudenko \cite{Roud:2003a} and Frazier and Roudenko \cite{FrazRoud:2004a} and give the following definition of anisotropic matrix-weighted Besov spaces, where it will be convenient to consider the direct sum Fr\'{e}chet space $\bigoplus_{j=1}^N\dS$, with dual space $\bigoplus_{j=1}^N\ddS$ consisting of $N$-tuples of tempered distributions.

\begin{definition}\label{def:Besov}
Let $W:\bR^d\rightarrow \bC^{N\times N}$ be a matrix weight. Let $\cC=\{P_k\}_{k\in\bN}$ be the covering defined in \eqref{eq:cF} and $\Phi=\{\phi_j\}_j$ be the BAPU given in Equation~\eqref{eq:bapu_def}. We define the inhomogeneous
matrix-weighted Besov space ${B}_{p,q}^s(A,W):={B}_{p,q}^s(A,W,\Psi)$ for $0<q	\leq \infty$ as the collection of vector-valued tempered distributions $\vf=(f_1,\ldots f_N)^\top\in \bigoplus_{j=1}^N\mathcal{S}'(\bR^d)$ satisfying
\begin{equation}\label{eq:defB}
\|\vf\|_{{B}_{p,q}^s(A,W)}:=\bigg(\sum_{j=1}^\infty |P_j|^{sq/\nu}\|\phi_j(D)\vf\|_{L^p(W)}^q\bigg)^{1/q}<+\infty,
\end{equation}
with the sum replaced by $\sup_j$ in the case $q=\infty$.
\end{definition}

\begin{remark}\leavevmode 
\begin{itemize}
\item In Definition \ref{def:Besov} we assume that $\vf$ is a vector-valued tempered distribution, while $\phi_j\in\dS$, so $\phi_j(D)\vf$ is in fact always a $C^\infty$ vector-valued function, making it feasible to actually test the condition given in \eqref{eq:defB}. 
    \item We have $|P_j|\asymp t_j^\nu$ uniformly in $j$, so we may replace $|P_j|^{1/\nu}$ by $t_j$ in \eqref{eq:defB} to get an equivalent (quasi-)norm whenever it is convenient.
\end{itemize}
\end{remark}

In the isotropic case, $A=\text{Id}_{d\times d}$, Definition \ref{def:Besov} 
provides the exact same spaces as studied in \cite{Roud:2003a,FrazRoud:2004a}, where the authors derives a significant number of interesting properties of the spaces provided that the matrix weight satisfies a suitable $A_p$-condition.  The purpose here is to show that the various invariance results derived in the previous sections allow one to extend the theory to the anisotropic setting. Let us first state the most fundamental result on the spaces from Definition \ref{def:Besov}.

\begin{proposition}\label{prop:complete}
Let $0< p<\infty$ and suppose $W\in \mathbf{A}_p(\cB_A)$. For $0<q\leq \infty$ and $s\in\bR$, 
\begin{itemize}
\item[(i)] We have continuous embeddings
$$\bigoplus_{j=1}^N\dS\hookrightarrow B^{s}_{p,q}(A,W)\hookrightarrow \bigoplus_{j=1}^N\mathcal{S}'(\bR^n).$$
    \item[(ii)] The space $B^{s}_{p,q}(A,W)$ is complete, i.e., $B^{s}_{p,q}(A,W)$ is a (quasi-)Banach space.
    \item[(iii)] The space $B^{s}_{p,q}(A,W)$ is independent of the choice of BAPU (up to equivalence of norms).
\end{itemize}
\end{proposition}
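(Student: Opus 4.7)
The plan is to handle the three parts in the stated order, using the Fourier multiplier bound of Proposition \ref{prop:main} and the matrix $\mathbf{A}_p(\cB_A)$ doubling properties of Section \ref{sec:metr} (especially Lemma \ref{le:do}) as the principal ingredients. I would fix a BAPU $\Phi=\{\phi_j\}$ throughout parts (i) and (ii), deferring the independence from $\Phi$ to (iii).

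For the first embedding in (i), $\bigoplus_{j=1}^N\dS\hookrightarrow B^s_{p,q}(A,W)$, I would show that $\|\phi_j(D)\vf\|_{L^p(W)}$ decays super-polynomially in $t_j$ for any Schwartz $\vf$. This follows by combining the rapid decay $|\phi_j(D)\vf(x)|\leq C_N\brac{x}_A^{-N}$ (valid for every $N$, with an explicit polynomial dependence on $t_j$ obtained from the kernel estimate \eqref{eq:dec} and Schwartz seminorms of $\vf$) with the polynomial growth $\int_{B_A(0,R)}\|W(t)\|\,\d t\lesssim R^\beta$ supplied by Lemma \ref{le:do}(ii) and \eqref{eq:dou_exp}; taking $N$ sufficiently large beats any $|P_j|^{sq/\nu}\asymp t_j^{sq}$ factor. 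For the dual embedding $B^s_{p,q}(A,W)\hookrightarrow\bigoplus_{j=1}^N\ddS$, I would test against $\vg\in\bigoplus_{j=1}^N\dS$, apply H\"older (with the obvious modification for $p\leq 1$) to get $|\langle\phi_j(D)\vf,\vg\rangle|\lesssim\|\phi_j(D)\vf\|_{L^p(W)}\cdot\|W^{-1/p}\vg\|_{L^{p'}}$, and bound the second factor using the polynomial growth of $\|W^{-1/p}\|$ on $\cB_A$-balls--which comes from $W^{-p'/p}\in\mathbf{A}_{p'}(\cB_A)$ (Remark \ref{re:dual}) and its doubling--against Schwartz decay of $\vg$. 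A H\"older-in-$j$ argument then yields absolute convergence of $\sum_j\langle\phi_j(D)\vf,\vg\rangle$ bounded by the Besov (quasi-)norm and seminorms of $\vg$, identifying $\vf$ as an element of $\bigoplus_{j=1}^N\ddS$.

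Completeness (part (ii)) is then standard: a Cauchy sequence $\{\vf_n\}$ in $B^s_{p,q}(A,W)$ is Cauchy in $\bigoplus_{j=1}^N\ddS$ by (i), and therefore converges to some $\vf\in\bigoplus_{j=1}^N\ddS$. Extracting an a.e.\ pointwise convergent subsequence of $\phi_j(D)\vf_n$ (each is a continuous function by convolution with $\cF^{-1}\phi_j\in\dS$) and applying Fatou's lemma first to $\|\phi_j(D)(\vf-\vf_n)\|_{L^p(W)}^p$ and then to the outer $\ell^q$-sum (replacing the sum by $\sup_j$ when $q=\infty$) yields $\vf_n\to\vf$ in the Besov (quasi-)norm.

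For (iii), given two BAPUs $\{\phi_j\}$ and $\{\tilde\phi_k\}$ subordinate to coverings $\cC=\{P_j\}$ and $\tilde\cC=\{\tilde P_k\}$, the finite intersection property \eqref{eq:FIP} makes $\tilde\phi_k(D)\vf=\sum_{j:\,P_j\cap\tilde P_k\neq\emptyset}\tilde\phi_k(D)\phi_j(D)\vf$ a uniformly bounded sum. For each overlapping pair $(j,k)$, the symbol $\tilde\phi_k\phi_j$ is supported in $P_j\cap\tilde P_k\subseteq B_A(\zeta,R)$ with $R\asymp t_j\asymp\tilde t_k$ by \eqref{eq:ratio}, and its inverse Fourier transform satisfies an estimate of the form \eqref{eq:e1} uniformly in $j,k$. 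Proposition \ref{prop:main} then supplies $\|\tilde\phi_k(D)\phi_j(D)\vf\|_{L^p(W)}\lesssim\|\phi_j(D)\vf\|_{L^p(W)}$ uniformly, and the $\ell^q$ (quasi-)triangle inequality, together with $|\tilde P_k|\asymp|P_j|$ for overlapping pairs, closes the equivalence $\|\vf\|_{B^s_{p,q}(A,W,\tilde\Phi)}\asymp\|\vf\|_{B^s_{p,q}(A,W,\Phi)}$. The main obstacle I expect is precisely this last uniformity: verifying that $\cF^{-1}(\tilde\phi_k\phi_j)$ satisfies \eqref{eq:e1} with constants depending only on fixed data requires careful bookkeeping of the dilation-translation structure $T_j=\delta_{t_j}\cdot+\xi_j$ generating the cover, and the constants in Proposition \ref{prop:main} survive this rescaling thanks to the affine invariance of $[W]_{\mathbf{A}_p(\cB_A)}$ proved in Lemma \ref{le:inv}.
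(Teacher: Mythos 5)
Your treatment of parts (ii) and (iii), and of the first embedding in (i), is essentially the paper's proof: (ii) is the Fatou argument after passing to $\bigoplus\ddS$; (iii) expands $\phi_k(D)\vf$ across the other cover using \eqref{eq:FIP} and \eqref{eq:ratio} and invokes Proposition \ref{prop:main}; and for the first embedding the paper uses the $\dS$-kernel estimate $\|\brac{\cdot}_A^m\phi_k(D)f_i\|_\infty\lesssim t_k^{-r}p_N(f_i)$ together with the moderate growth of $\|W^{1/p}\|^p\in A_{\max\{1,p\}}(\cB_A)$, which is what you describe.

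There is, however, a genuine gap in your argument for the embedding $B^s_{p,q}(A,W)\hookrightarrow\bigoplus\ddS$. You apply H\"older to obtain $|\langle\phi_j(D)\vf,\vg\rangle|\lesssim\|\phi_j(D)\vf\|_{L^p(W)}\cdot\|W^{-1/p}\vg\|_{L^{p'}}$, but the second factor is \emph{independent of $j$}. The H\"older-in-$j$ step you then need, pairing $\{t_j^s\|\phi_j(D)\vf\|_{L^p(W)}\}_j\in\ell^q$ against a dual sequence, would require $\{t_j^{-s}\|W^{-1/p}\vg\|_{L^{p'}}\}_j\in\ell^{q'}$ -- which fails whenever $s\le0$, and fails for all $s$ whenever $q>1$ (so that $q'<\infty$), since $t_j\to\infty$. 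The point, and what the paper does, is to make the \emph{second} H\"older factor decay in $j$ by using the squareroot BAPU $\{\psi_k\}$ of \eqref{eq:bapusq}: since $\sum_k\psi_k^2\equiv1$, one writes $\langle\vf,\boldsymbol\theta\rangle=\sum_k\langle\psi_k(D)\vf,\psi_k(D)\boldsymbol\theta\rangle$, and now the second factor $t_k^{-s}\|\psi_k(D)\boldsymbol\theta\|_{L^{p'}(W^{-p'/p})}$ decays super-polynomially by the already-established first embedding applied to $W^{-p'/p}\in\mathbf{A}_{p'}(\cB_A)$. (Equivalently, one can insert a slightly fattened multiplier $\tilde\phi_j$ with $\tilde\phi_j\equiv1$ on $\supp\phi_j$, replacing $\vg$ by $\tilde\phi_j(D)\vg$.) Without some such localization of the test function, the convergence of $\sum_j\langle\phi_j(D)\vf,\vg\rangle$ is not controlled by the Besov quasinorm. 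Relatedly, dismissing $0<p\le1$ as ``the obvious modification'' hides a real step: there is no $L^{p'}$ duality there, and the paper instead uses $W^{\pm1}\in\mathbf{A}_2(\cB_A)$ (via $\mathbf{A}_1\subset\mathbf{A}_2$) together with the pointwise inequality $\|g\|_{L^2(W)}\le\|g\|_{L^p(W)}^{p/2}$ from Bownik's lemma to reduce to $L^2$--$L^2$ pairing.
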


\begin{proof}
    The proof of (i) and (ii) can be found in Appendix \ref{app:A}. For (iii), let $\Phi=\{\phi_k\}_{k\in\bN}$ be the BAPU defined in \eqref{eq:bapu_def} with corresponding structured covering $\cC=\{P_k\}_{k\in\bN}$ and suppose 
    $\cC_{\beta}=\{Q_\ell\}_{\ell\in\bN}$, with $Q_\ell=B_A(\zeta_\ell,c\brac{\zeta_\ell}_A)$, is another structured covering with associated BAPU, $\Gamma=\{\gamma_\ell\}_{\ell\in \bN}$.
We first notice, using uniformly bounded height of   structured covering, that for $k\in \bN$,
\begin{equation}\label{eq:pdo}
\phi_k(D)\vf=\phi_k(D)\sum_{\ell\in A_k}\gamma_\ell(D)\vf,
\end{equation}
with $A_k=\{\ell\in \bN:P_k\cap Q_\ell\not=\emptyset\}$, where  $\#A_k$ is bounded by a uniform constant $C$, see Equation~\eqref{eq:FIP}.  Hence, by Proposition \ref{prop:main},
$$\|\phi_k(D)\vf\|_{L^p(W)}\leq C\sum_{\ell\in A_k}\|\gamma_{\ell}(D)\vf\|_{L^p(W)}.$$
By Equation~\eqref{eq:ratio}, for $Q_\ell$ and $P_k$ with  $P_k\cap Q_\ell\not=\emptyset$, we have $|Q_k|\asymp |P_\ell|$ uniformly in $k$ and $\ell$.  It follows from this observation that
$$|P_k|^{s/\nu}\|\phi_k(D)\vf\|_{L^p(W)}\leq C\sum_{\ell\in A_k}|P_\ell|^{s/\nu}\|\gamma_{\ell}(D)\vf\|_{L^p(W)}.$$
Similarly, we obtain, for $\ell\in\bN$,
$$|Q_\ell|^{s/\nu}\|\gamma_\ell(D)\vf\|_{L^p(W)}\leq C\sum_{k\in B_\ell}|P_k|^{s/\nu}\|\phi_k(D)\vf\|_{L^p(W)},$$
with $B_\ell=\{k\in \bN:Q_\ell\cap P_k\not=\emptyset\}$.
Using the uniform bounds on the cardinality of the sets $A_k$ and $B_\ell$, it is  then straightforward to verify that
\begin{align*}
\|\vf\|_{B^{s}_{p,q}(A,W)}&=\bigg\|\bigg\{|P_k|^{s/\nu}\|\phi_k(D)\vf\|_{L^p(W)}\bigg\}_k\bigg\|_{\ell_q}\\&\asymp 
\bigg\|\bigg\{|Q_j|^{s/\nu}\|\gamma_\ell(D)\vf\|_{L^p(W)}\bigg\}_\ell\bigg\|_{\ell_q},\qquad \vf\in 
B^{s}_{p,q}(A,W).
\end{align*}
\end{proof}

\begin{remark}\label{rem:en}
    The same argument as  used in the proof above applied to the ``square root'' BAPU $\Psi=\{\psi_j\}_j$ defined in \eqref{eq:bapusq} provides us with the norm equivalence
    \begin{equation}\label{eq:equivB}
\|\vf\|_{{B}_{p,q}^s(A,W)}\asymp \bigg(\sum_{j=1}^\infty |P_j|^{sq/\nu}\|\psi_j(D)\vf\|_{L^p(W)}^q\bigg)^{1/q}<+\infty,
\end{equation}
provided $W\in \mathbf{A}_p(\cB_A)$, $0<p<\infty$. We leave the details to the reader.
\end{remark}

\subsection{Tight frames for matrix-weighted Besov spaces}
A tight frame associated with the covering $\cC$ can easily be constructed using the system $\Psi$ defined in \eqref{eq:bapusq}.   Recall  that $c_1\leq 1$, so it follows from \eqref{eq:scaling} that $${B}_A(0,c_1)\subseteq \{z\in\bR^d:|z|<2\}\subset [-\pi,\pi)^d.$$ 
As before, put 
$t_j:=\brac{\xi_j}_A$ with $\xi_j$ from $\cC$. We  define
$$e_{k,\ell}(\xi) :=
(2\pi)^{-\frac{d}2}t_k^{-\nu/2}\mathbf{1}_{[-\pi,\pi)^d}(T_k^{-1}\xi)e^{i\ell\cdot
  T_k^{-1}\xi},\qquad \ell\in \bZ^d,\; k\in \bN,$$
and define the functions $\omega_{k,\ell}$ in the Fourier domain as follows
\begin{equation}
  \label{eq:ltf}
  \hat\omega_{k,\ell} :=\psi_ke_{k,\ell}\qquad \ell\in \bZ^d,\; k\in \bN.
\end{equation}
It is easy to verify that
$\Omega:=\{\omega_{k,\ell}\}_{k,\ell}$ is a tight frame for $L_2(\bR^d)$. In face, we just need
to notice that  $\{e_{k,\ell}\}_{\ell\in \bZ^d}$ is an orthonormal basis for
  $L^2(T_k([-\pi,\pi)^d))$ with $\supp(\psi_k)\subset T_kB_A(0,c_1)\subset T_k([-\pi,\pi)^d)$, which yields
  $$
  \sum_{\ell\in\bZ^d} |\langle f,\omega_{k,\ell}\rangle|^2=\sum _{\ell\in\bZ^d} |
  \langle \psi_{k}\hat{f},e_{k,\ell}\rangle|^2 =\|\psi_{k}
  \hat{f}\|_{L_2}^2.
  $$ 
  Moreover, as $\{\psi^2_k\}_{k\in \bN}$ forms a partition of unity,
  \begin{equation}\label{eq:frame}\sum_{\ell\in\bZ^d,k\in\bN} |\langle f,\omega_{k,\ell}\rangle|^2=
  \sum_{k\in \bN} \|\psi_{k}\hat{f}\|_{L_2}^2 =
  \int_{\bR^d} \sum_{k\in \bN} \psi_k^2(\xi)|\hat{f}(\xi)|^2\, \d\xi
  =\|f\|_{L_2}^2.
  \end{equation}

We
can also obtain an explicit representation of $\omega_{k,\ell}$ in direct
space. Put $\hat{\mu}_k(\xi):= \psi_k(T_k\xi)$. Then
\begin{equation}
  \label{eq:ltfd}
  \omega_{k,\ell}(x) = (2\pi)^{-\frac{d}2}t_k^{\nu/2} \mu_k(
    \delta_{t_k}x+\ell) e^{ix\cdot \xi_k}.
\end{equation}
It can be
verified, using the uniform compact frequency support of the functions $\{{\mu}_k\}$, that
$$|\mu_k(x)| \leq C_M \brac{x}_A^{-M},$$
for any $M\in \bN$, with $C_M$ independent of $x\in \bR^d$ and $k\in \bN$. 
Hence,
\begin{equation}
  \label{eq:dec_eta}
  |\omega_{k,\ell}(x)| \leq Ct_k^{\nu/2} \left(1+{t_k}\big|x-x_{k,\ell}\big|_A\right)^{-M},\qquad \text{where } x_{k,\ell}:=\delta_{t_k}^{-1}\ell.
\end{equation}
We  again refer to \cite{BoruNiel:2008a} for the details.

Let us now introduce suitable discrete sparseness spaces with the goal to obtain a discrete characterization of smoothness measured in $B^s_{p,q}(A,W)$.  Recall that for $P_k\in\cC$, we have $P_k=\delta_{t_k}B_A(0,c_1)+\xi_k$. For $k\in\bN$, $\ell\in\bZ^d$, we define 
\begin{equation}\label{eq:uk}
U{(k,\ell)}:=\delta_{ t_k}^{-1} U_\ell=B_A\left(x_{k,l},t_k^{-1}r_0\right),
\end{equation}
with $U_\ell$ defined in \eqref{eq:Qk} and $x_{k,l}$ given in \eqref{eq:dec_eta}.  Notice that uniformly in $k$, 
\begin{equation}\label{eq:U_kl}
|U{(k,\ell)}|=t_k^{-\nu}|U_\ell|=t_k^{-\nu}|U_0|\asymp t_k^{-\nu}.
\end{equation}
We now use the sets $\{U(k,\ell)\}$ to define the following vector-valued sequence spaces.
\begin{definition}
Let $W:\bR^d\rightarrow \bC^{N\times N}$ be a matrix weight, and suppose
 $s\in\bR$, $0< p<\infty$, and $0<q\leq \infty$. We let 
let $\cU=\{U(k,\ell)\}_{k\in\bN,\ell\in\bZ^d}$ be the collection of sets defined in \eqref{eq:uk}.
Let $b^{s}_{p,q}(A,W)$ denote the collection of all vector-valued sequences 
$\vs=\{\vs_{(k,\ell)}\}_{(k,\ell)\in\bN\times\bZ^d}$, where $\vs_{(k,\ell)}=\big(s_{(k,\ell)}^{(1)}, \ldots,s_{(k,\ell)}^{(N)}\big)^\top$, such that
\begin{align*}
\|\vs\|_{b^{s}_{p,q}(A,W)}&:=\bigg\|\bigg\{t_k^s\bigg\|\sum_{\ell\in\bZ^d} |U(k,\ell)|^{-\frac{1}{2}}\vs_{(k,\ell)}\mathbf{1}_{U(k,\ell)}\bigg\|_{L^p(W)}\bigg\}_k\bigg\|_{\ell_q}\\
&=\bigg(\sum_{k\in\bN}\bigg\| r_k^s\sum_{\ell\in\bZ^d} |U(k,\ell)|^{-\frac{1}{2}}\big\|W^{1/p}(t)\vs_{(k,\ell)}\big\|\mathbf{1}_{U(k,\ell)}(t)\bigg\|_{L^p(\dt)}^q\bigg)^{1/q}.
\end{align*}
 For $q=\infty$, the $\ell^q$-norm is replaced by the supremum over $k$.
\end{definition}

The sampling result in Proposition \ref{prop:samp} can now be used to obtain the following estimate for the coefficient operator $C$
defined on $B_{p,q}^{s}(A,W)$ by
  \begin{equation}\label{eq:coefficient}
C\vf:=\big(\langle \vf,\omega_{k,\ell}\rangle\big)_{k\in\bN,\ell\in\bZ^d},\qquad \vf\in B_{p,q}^{s}(A,W).
\end{equation}

\begin{proposition}\label{prop:coef}
  Let  $0< p<\infty$ and $0<q\leq\infty$. Suppose $W\in \mathbf{A}_p(\cB_A)$ and let $\Omega=\{\omega_{k,\ell}\}_{k,\ell}$ be the system defined in \eqref{eq:ltf}. Then there exists a constant $C:=C(p,q,W)$ such that for $\vf\in B_{p,q}^{s}(A,W)$,
  \begin{equation}\label{eq:coe}
      \|\{\vc_{k,\ell}\}_{k,\ell}\|_{b_{p,q}^{s}(A,W)}\leq C \|\vf\|_{B_{p,q}^{s}(A,W)},
  \end{equation}
  with $\vc_{k,\ell}:=\langle \vf,\omega_{k,\ell}\rangle$.
\end{proposition}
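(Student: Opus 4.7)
The plan is to express each coefficient $\vc_{k,\ell}=\langle\vf,\omega_{k,\ell}\rangle$ as a pointwise sample of the bandlimited vector $\vg_k:=\psi_k(D)\vf$, reduce the $b^{s}_{p,q}(A,W)$ quasi-norm to a sum of sampled $L^p(W)$-integrals by exploiting the finite overlap of $\{U(k,\ell)\}_\ell$, and then conclude by applying Proposition \ref{prop:samp} to each $\vg_k$ and invoking the BAPU-independence noted in Remark \ref{rem:en}.

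First I would use Parseval together with the definition \eqref{eq:ltf} to write $\vc_{k,\ell}=\langle\hat{\vg}_k,e_{k,\ell}\rangle$, perform the change of variables $\eta=T_k^{-1}\xi$, and recognise the resulting integral as the inverse Fourier transform of $\hat{\vg}_k$ evaluated at $\delta_{t_k}^{-1}\ell=x_{k,\ell}$. A direct computation should then yield a sampling identity of the form
\begin{equation*}
\vc_{k,\ell}=t_k^{-\nu/2}\,\epsilon_{k,\ell}\,\vg_k(-x_{k,\ell}),
\end{equation*}
where $\epsilon_{k,\ell}$ is unimodular. Since $\ell\mapsto -\ell$ is a bijection of $\bZ^d$ that preserves the collection $\{U(k,\ell)\}_\ell$, after a harmless relabeling we obtain $|W^{1/p}(t)\vc_{k,\ell}|^p=t_k^{-\nu p/2}|W^{1/p}(t)\vg_k(x_{k,\ell})|^p$ for every $t$.

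Next, the sets $\{U(k,\ell)\}_\ell=\{\delta_{t_k}^{-1}U_\ell\}_\ell$ inherit the uniform finite-overlap bound from \eqref{eq:height}. Together with the triangle inequality (for $p\geq 1$) or $p$-subadditivity (for $p<1$), this gives a constant $c_p$ such that, pointwise in $t$,
\begin{equation*}
\bigg|W^{1/p}(t)\sum_{\ell}|U(k,\ell)|^{-1/2}\vc_{k,\ell}\mathbf{1}_{U(k,\ell)}(t)\bigg|^p\leq c_p\sum_{\ell}|U(k,\ell)|^{-p/2}|W^{1/p}(t)\vc_{k,\ell}|^p\mathbf{1}_{U(k,\ell)}(t).
\end{equation*}
Integrating in $t$, using $|U(k,\ell)|\asymp t_k^{-\nu}$ from \eqref{eq:U_kl} together with the sampling identity above, the factors $t_k^{\pm\nu p/2}$ cancel and leave
\begin{equation*}
\bigg\|\sum_{\ell}|U(k,\ell)|^{-1/2}\vc_{k,\ell}\mathbf{1}_{U(k,\ell)}\bigg\|_{L^p(W)}^p\leq c_p'\sum_{\ell}\int_{U(k,\ell)}|W^{1/p}(t)\vg_k(x_{k,\ell})|^p\dt.
\end{equation*}

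Finally, I apply Proposition \ref{prop:samp} to $\vg_k$ viewed as an element of $E_{B_A(\xi_k,t_k)}\cap L^p(W)$; since $c_1\leq 1$ we have $P_k\subset B_A(\xi_k,t_k)$, so the sampling radius $R=t_k$ produces exactly the sample points $x_{k,\ell}=\delta_{t_k}^{-1}\ell$ and the sets $U_{(B,\ell)}=\delta_{t_k}^{-1}U_\ell=U(k,\ell)$. This bounds the right-hand side above by $c_{p,d}\|\psi_k(D)\vf\|_{L^p(W)}^p$ with constant independent of $k$, as guaranteed by the affine invariance of $\mathbf{A}_p(\cB_A)$ in Lemma \ref{le:inv}. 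Raising to the power $1/p$, multiplying by $t_k^s\asymp |P_k|^{s/\nu}$, taking the $\ell_q$-norm in $k$, and invoking the norm equivalence in Remark \ref{rem:en} then produces \eqref{eq:coe}. The main obstacle I anticipate is the first step: pinning down the Fourier-analytic sampling identity $\vc_{k,\ell}=t_k^{-\nu/2}\vg_k(\pm x_{k,\ell})$ up to a unimodular factor, since the anisotropic symmetric dilations $\delta_{t_k}$ interact non-trivially with the exponentials inside $e_{k,\ell}$ and careful bookkeeping of the signs and normalising constants is required.
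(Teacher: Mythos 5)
Your proposal follows the paper's proof essentially step for step: identify $\vc_{k,\ell}$ (up to a unimodular factor and the normalization $|U(k,\ell)|^{1/2}\asymp t_k^{-\nu/2}$) with a point value of $\vg_k=\psi_k(D)\vf\in E_{B_A(\xi_k,t_k)}$, use the finite overlap of $\{U(k,\ell)\}_\ell$ to dominate the $b^{s}_{p,q}(A,W)$-norm by the sampled sums, apply Proposition \ref{prop:samp} with $B=B_A(\xi_k,t_k)$ (uniformly in $k$ via Lemma \ref{le:inv}), and finish with the equivalence of Remark \ref{rem:en}. This is exactly the argument in the paper, and it is correct in substance.

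The one step that does not hold up as written is your treatment of the sign: the claim that, ``since $\ell\mapsto-\ell$ preserves the collection $\{U(k,\ell)\}_\ell$, after a harmless relabeling $|W^{1/p}(t)\vc_{k,\ell}|^p=t_k^{-\nu p/2}|W^{1/p}(t)\vg_k(x_{k,\ell})|^p$.'' A relabeling of the index cannot change the point at which $\vg_k$ is evaluated for a \emph{fixed} $\ell$, and it is not harmless inside the $b^{s}_{p,q}(A,W)$-norm, because there each coefficient $\vc_{k,\ell}$ is glued to its own set $U(k,\ell)=B_A(x_{k,\ell},t_k^{-1}r_0)$; permuting the coefficients relative to the sets changes the norm. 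If one insisted on the evaluation point $-x_{k,\ell}$, the sum one would need to control is $\sum_\ell\int_{U(k,\ell)}|W^{1/p}(x)\vg_k(-x_{k,\ell})|^p\dx$, and transferring the weight mass from $U(k,-\ell)$ (which is what Proposition \ref{prop:samp} pairs with the sample at $-x_{k,\ell}$) to $U(k,\ell)$ costs a doubling factor of order $\brac{2\ell}_A^{\beta}$, which is not summable against nothing. The correct resolution is not a relabeling but the sign convention itself: consistently with \eqref{eq:dec_eta}, $\omega_{k,\ell}$ is normalized so that it is localized at $x_{k,\ell}=\delta_{t_k}^{-1}\ell$ (equivalently, one takes $e^{-i\ell\cdot T_k^{-1}\xi}$ in $e_{k,\ell}$ under the sesquilinear pairing), and then the Parseval computation gives $|\langle\vf,\omega_{k,\ell}\rangle|=c\,|U(k,\ell)|^{1/2}\,\big|[\psi_k(D)\vf](\delta_{t_k}^{-1}\ell)\big|$, i.e.\ the sample point is the center of $U(k,\ell)$, which is exactly what Proposition \ref{prop:samp} requires. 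With that convention fixed, the rest of your argument goes through verbatim and coincides with the paper's proof.
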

\begin{proof}
Recall that for  $T_k:=\delta_{t_k}\cdot+\xi_k$,
$\omega_{k,\ell}\in E_{T_k B_A(0,c_1)}$ with
$$\hat{\omega}_{k,\ell}(\xi)=(2\pi)^{-\frac{d}2}t_k^{-\nu/2}\mathbf{1}_{[-\pi,\pi)^d}(T_k^{-1}\xi)e^{i\ell\cdot
  T_k^{-1}\xi}\psi_k(\xi).
$$
We have $T_k^{-1}=\delta_{t_k}^{-1}(\cdot-\xi_k)$, which implies, using \eqref{eq:ltf},
$$|\langle \vf,\omega_{k,\ell}\rangle|=c|U(k,\ell)|^{1/2}[\psi_k(D)\vf]\left(\delta_{t_k}^{-1}\ell\right),$$
with $c:=(2\pi)^{-d/2}|U_0|^{-1}$  a fixed constant. We notice that $$\psi_k(D)\vf\in E_{T_kB_A(0,c_1)}\subseteq E_{T_kB_A(0,1)}=E_{B_A(\xi_k,t_k)}.$$ 

Now, as the sets $U(k,\ell)$ satisfy the exact same finite height property \eqref{eq:height} as the unit-scale sets $\{U_\ell\}$,  we use the general sampling in Proposition \ref{prop:samp} with $B=B_A(\xi_k,t_k)$ to deduce that there exists a constant $c_{p,\nu}:=c_{p,\nu}([W]_{\mathbf{A}_p(\cB_A)})$, such that
\begin{align*}
  \|  \{\langle \vf,\omega_{k,\ell}\rangle\}_{k,\ell} \|_{b_{p,q}^{s}(A,W)}&=
  \bigg\|\bigg\{t_k^s\bigg\|\sum_{\ell\in\bZ^d} |U(k,\ell)|^{-\frac{1}{2}}\langle \vf,\omega_{k,\ell}\rangle\mathbf{1}_{U(k,\ell)}\bigg\|_{L^p(W)}\bigg\}_k\bigg\|_{\ell_q}\\
  &\asymp\bigg\|\bigg\{t_k^s\bigg[\sum_{\ell\in \bZ^d}\int_{U(k,\ell)}\bigg|W^{1/p}(x)[\psi_k(D)\vf] \left(\delta_{t_k}^{-1}\ell\right)\bigg|^p \dx\bigg]^{1/p}\bigg\}_k\bigg\|_{\ell_q}\\
    &\leq c_{p,\nu}\left\|\left\{t_k^s\|\psi_k(D)\vf\|_{L^p(W)} \right\}_k\right\|_{\ell_q}\\
&\asymp  c_{p,\nu}\|\vf\|_{B_{p,q}^{s}(A,W)},    
\end{align*}
where we used the observation in Remark \ref{rem:en} for the final estimate.
\end{proof}

  We turn our attention to the canonical reconstruction/synthesis operator $T$ for the tight frame $\{\omega_{k,\ell}\}$.  The reconstruction/synthesis operator $T$ is defined for (finite) sequences $\vc=\{\vc_{k,\ell}\}$ by 
  \begin{equation}\label{eq:reconstruct}
  T\vc:=\sum_{{k,\ell}}\vc_{k,\ell} \omega_{k,\ell}.
  \end{equation}
  We will prove that $T$  is bounded from $b_{p,q}^{s}(A,W)$ to $B_{p,q}^{s}(A,W)$ for suitable weights $W$.
\begin{remark}\label{rem:T}
The fact that $\{\omega_{k,\ell}\}$ is a redundant system in $L^2(\mathbb{R}^d)$ makes it a non-trivial issue to verify that $T$ is actually well-defined with an extension to all of $b_{p,q}^{s}(A,W)$ based on the result for finite sequences presented in Proposition \ref{prop:recon} below. We shall not pursue the details here, but refer the reader to the general approach to the issue presented in \cite[Section 3]{GeorJohnNiel:2017a}, which is applicable to the present setup.
\end{remark}  
   We have the following result that in particular applies to matrix weights in $\mathbf{A}_p(\cB_A)$, cf.\ (i) in Lemma \ref{le:do}. 
\begin{proposition}\label{prop:recon}
  Let  $0< p<\infty$, $0<q<\infty$, and let $W$ be a matrix weight 
  for which $u_x(\cdot):=|W^{1/p}(\cdot)x|^p$  satisfies the doubling condition \eqref{eq:doub} uniformly in $x\in\bR^d$. Then there exists a constant $C$ such that for any finite vector-valued coefficient sequence $\vc:=\{\vc_{k,\ell}\}_{(k,\ell)\in F}$, $F\subset \bN\times\bZ^d$,
  \begin{equation}
      \bigg\|\sum_{(k,\ell)\in F}\vc_{k,\ell}\omega_{k,\ell}\bigg\|_{B_{p,q}^{s}(A,W)}\leq C \|\{\vc_{k,\ell}\}\|_{b_{p,q}^{s}(A,W)}.
  \end{equation}
  
\end{proposition}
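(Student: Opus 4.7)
My plan is to estimate the Besov norm of $F := T\vc = \sum_{k,\ell} \vc_{k,\ell}\omega_{k,\ell}$ one BAPU-piece at a time and to bound each piece by the corresponding atom $X_k := \big\|\sum_\ell |U(k,\ell)|^{-1/2}\vc_{k,\ell}\mathbf{1}_{U(k,\ell)}\big\|_{L^p(W)}$, then sum in $k$. Writing $F_k := \sum_\ell \vc_{k,\ell}\omega_{k,\ell}$, the frequency localization $\supp(\hat\omega_{k,\ell})\subseteq P_k$ gives $\phi_j(D)F = \sum_{k\in A_j}\phi_j(D)F_k$ with $A_j := \{k : P_k\cap P_j\not=\emptyset\}$; by \eqref{eq:FIP} we have $\#A_j\leq C$, and by \eqref{eq:ratio}, $t_j\asymp t_k$ whenever $k\in A_j$.

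Next I would establish a pointwise decay estimate for $\phi_j(D)\omega_{k,\ell} = (\mathcal{F}^{-1}\phi_j)\ast\omega_{k,\ell}$. Combining \eqref{eq:dec} and \eqref{eq:dec_eta} with $t_j\asymp t_k$, and using the triangle-type inequality \eqref{windingroads} for $|\cdot|_A$, a standard convolution-of-rapidly-decreasing-functions computation yields
\begin{equation*}
|\phi_j(D)\omega_{k,\ell}(x)|\leq C_M\, t_k^{\nu/2}(1+t_k|x-x_{k,\ell}|_A)^{-M}
\end{equation*}
for any prescribed $M$, with $C_M$ uniform in $k,\ell,j$. In particular, $\phi_j(D)\omega_{k,\ell}$ has decay of the same type as $\omega_{k,\ell}$ itself, so I never need to invoke Proposition \ref{prop:main} (which would require the stronger $\mathbf{A}_p$ hypothesis).

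The crucial step is then to show $\|\phi_j(D)F_k\|_{L^p(W)}\leq CX_k$ using only the doubling hypothesis on $u_\vx(\cdot):=|W^{1/p}(\cdot)\vx|^p$. For $0<p\leq 1$, $p$-subadditivity of the integral reduces the task to estimating $\int|W^{1/p}(x)\vc_{k,\ell}|^p(1+t_k|x-x_{k,\ell}|_A)^{-Mp}\,\dx$ for each $\ell$; for $p>1$, a discrete H\"older split $M=M_1+M_2$ with $M_1 p'>\nu$ makes $\sum_\ell(1+t_k|x-x_{k,\ell}|_A)^{-M_1 p'}\leq C$ via \eqref{eq:integral}, again reducing to the same $\ell$-pointwise integral (now with $M_2$ in place of $M$). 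Each such integral is handled by splitting $\bR^d$ into dyadic $|\cdot|_A$-annuli around $x_{k,\ell}$ and iterating the doubling hypothesis on $u_{\vc_{k,\ell}}$ to absorb each annular integral into $\int_{U(k,\ell)}u_{\vc_{k,\ell}}(x)\,\dx$; the geometric series converges provided $Mp>\beta$ (respectively $M_2 p>\beta$). Using $|U(k,\ell)|\asymp t_k^{-\nu}$ together with the bounded overlap of $\{U(k,\ell)\}_\ell$, the resulting sum in $\ell$ is controlled by $CX_k^p$.

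Finally, combining these three steps gives $\|\phi_j(D)F\|_{L^p(W)}^q\leq C\sum_{k\in A_j}X_k^q$ (the different ranges of $p$ and $q$ are handled by subadditivity when the relevant exponent is $\leq 1$ and by the bounded cardinality of $A_j$ otherwise). Multiplying by $|P_j|^{sq/\nu}$, summing in $j$, swapping the order of summation via $k\in A_j\Leftrightarrow j\in B_k$, and using $|P_j|\asymp|P_k|$ for such pairs together with $\#B_k\leq C$, delivers $\|F\|_{B^s_{p,q}(A,W)}^q\leq C\sum_k|P_k|^{sq/\nu}X_k^q = C\|\vc\|_{b^s_{p,q}(A,W)}^q$. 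The main obstacle is the bookkeeping in the third step: one has to pick $M$ large enough to simultaneously satisfy the doubling threshold $Mp>\beta$, the summability threshold $M_1 p'>\nu$, and the decay requirement obtained in the second step. This is unproblematic, however, because the $\psi_k$ are Schwartz, so $\mathcal{F}^{-1}\phi_j$ has arbitrary polynomial decay and $M$ can be taken as large as necessary.
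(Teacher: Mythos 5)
Your proof is correct, and it takes a genuinely different route through the key estimate than the paper does. The paper first uses the frequency-support structure to write $\phi_k(D)(T\vc)$ as $\phi_k(D)$ applied to the finite sum $\sum_{j\in N(k)}\sum_\ell \vc_{j,\ell}\omega_{j,\ell}$, and then invokes Proposition~\ref{prop:main} to strip off the multiplier $\phi_k(D)$, after which it uses the decay of $\omega_{j,\ell}$ directly from \eqref{eq:dec_eta}. You instead establish the pointwise decay $|\phi_j(D)\omega_{k,\ell}(x)|\lesssim t_k^{\nu/2}(1+t_k|x-x_{k,\ell}|_A)^{-M}$ by convolving the two rapidly decaying kernels \eqref{eq:dec} and \eqref{eq:dec_eta} (with $t_j\asymp t_k$), and you never invoke Proposition~\ref{prop:main} at all. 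This is not merely a cosmetic difference: Proposition~\ref{prop:main} requires $W\in\mathbf{A}_p(\cB_A)$, whereas the hypothesis of the present proposition is only that $u_x(\cdot)=|W^{1/p}(\cdot)x|^p$ is doubling. Your argument therefore actually matches the stated hypothesis, while the paper's written proof implicitly uses the stronger $\mathbf{A}_p$ assumption at the multiplier step. The remaining steps — $p$-subadditivity or a discrete H\"older split $M=M_1+M_2$ with $M_1p'>\nu$, then Lemma~\ref{le:sq} with $Mp>\beta$ (resp.\ $M_2p>\beta$) to absorb the annuli into $\int_{U(k,\ell)}u_{\vc_{k,\ell}}$, followed by bounded overlap of $\{U(k,\ell)\}_\ell$ and the index-swap $k\in A_j\Leftrightarrow j\in B_k$ — are identical in substance to the paper's, and your bookkeeping on the thresholds for $M$ is right. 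One small point of hygiene: when you compare $\sum_\ell |U(k,\ell)|^{-p/2}\int_{U(k,\ell)}u_{\vc_{k,\ell}}$ with $X_k^p$ you are implicitly using the second (cancellation-free) expression for the $b^s_{p,q}(A,W)$-norm in Definition~\ref{def:Besov}; the first expression you quote, $\bigl\|\sum_\ell |U(k,\ell)|^{-1/2}\vc_{k,\ell}\mathbf{1}_{U(k,\ell)}\bigr\|_{L^p(W)}$, could a priori be smaller due to vector cancellation on overlaps, so you should say you work with the scalar form $\bigl\||U(k,\ell)|^{-1/2}|W^{1/p}(\cdot)\vc_{k,\ell}|\mathbf{1}_{U(k,\ell)}\bigr\|$ (the two are comparable by bounded overlap, but not literally equal).
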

\begin{proof}
Let $\Phi=\{\phi_k\}_k$ be the BAPU associated with $\cC=\{P_k\}_k$. We have, using the fact that $\text{supp}(\phi_k)\subseteq P_k=T_kB_A(0,c_1)$,

\begin{align*}
      \bigg\|\sum_{(j,\ell)\in F}\vc_{j,\ell}\omega_{j,\ell}\bigg\|_{B_{p,q}^{s}(A,W)}&=
      \bigg\|\bigg\{t_k^s\bigg\|\phi_k(D)\sum_{(j,\ell)\in F}\vc_{j,\ell}\omega_{j,\ell}\bigg\|_{L^p(W)}\bigg\}_k\bigg\|_{\ell_q}\\
      &= \bigg\|\bigg\{t_k^s\bigg\|\phi_k(D)\sum_{j\in N(k)}\sum_\ell\vc_{j,\ell}\omega_{j,\ell}\bigg\|_{L^p(W)}\bigg\}_k\bigg\|_{\ell_q},
\end{align*}
where $N(k)=\{m\in\bN:P_m\cap P_k\not=\emptyset\}$. Now, by \eqref{eq:ratio}, $t_j\asymp t_k$ (uniformly) for $j\in N(k)$, so by Proposition \ref{prop:main},

\begin{align*}
t_k^s\bigg\|\phi_k(D)\sum_{j\in N(k)}\sum_\ell\vc_{j,\ell}\omega_{j,\ell}\bigg\|_{L^p(W)}
     & \leq C t_k^s\bigg\|\sum_{j\in N(k)}\sum_\ell\vc_{j,\ell}\omega_{j,\ell}\bigg\|_{L^p(W)}\\
     &\leq C' \sum_{j\in N(k)}\bigg\|t_j^s \sum_\ell\vc_{j,\ell}\omega_{j,\ell}\bigg\|_{L^p(W)},
\end{align*}
where we relied on the localization property stated in \eqref{eq:dec}.
Recall that $\omega_{j,\ell}$ satisfies the decay property \eqref{eq:dec_eta} for any $M>0$. From this we obtain the estimate

\begin{align*}
\bigg\| \sum_\ell\vc_{j,\ell}\omega_{j,\ell}\bigg\|_{L^p(W)}^p&\leq 
\int_{\bR^d} \bigg(\sum_\ell |W^{1/p}(x)\vc_{j,\ell}|\, |\omega_{j,\ell}(x)|\bigg)^p\dx\\
&\leq C_M\int_{\bR^d} \bigg(t_j^{\nu/2}\sum_\ell |W^{1/p}(x)\vc_{j,\ell}|\, \big(1+t_j|x-x_{j,\ell}|_A\big)^{-M}\bigg)^p\dx\\
%&=C_Nt_j^{n/2}\int_{\bR^d} \bigg(t_j^{-n/2}\sum_\ell \|W^{1/p}(t_j^{-1}u)\vc_{k,\ell}\|\, \bigg(1+\bigg|u-\frac{\pi}{a}\ell\bigg|\bigg)^{-N}\bigg)^p\,du\\
&\leq C_M't_j^{\nu p/2}\int_{\bR^d} \sum_\ell |W^{1/p}(x)\vc_{j,\ell}|^p\, \big(1+t_j\big|x-x_{j,\ell}\big|_A\big)^{-\frac{Mp}{2}}\dx,
    \end{align*}
where we used the discrete H\"older inequality for the last step in the case $1<p<\infty$ with $M$ chosen large enough such that for the dual H\"older exponent $p'$ to $p$,
$$
  \sup_{u\in\bR^d} \sum_\ell \left(1+\big|u-\ell\big|_A\right)^{-\frac{Mp'}{2}} <\infty.$$
  For $0<p\leq 1$, we may obtain the needed estimate directly using sub-additivity. 
  By assumption, the scalar function $w_{j,\ell}(x):=|W^{1/p}(x)\vc_{k,\ell}|^p$ is doubling and thus has a doubling exponent $\beta>0$ from \eqref{eq:dou_exp} independent of $j$ and $\ell$. We therefore use Lemma \ref{le:sq} below to obtain the following estimate,
  \begin{align*}
\bigg\| \sum_\ell\vc_{j,\ell}\omega_{j,\ell}\bigg\|_{L^p(W)}^p&\leq 
C_N't_j^{\nu p/2}\sum_\ell\int_{\bR^d}  |W^{1/p}(x)\vc_{j,\ell}|^p\, \big(1+t_j\big|x-x_{j,\ell}\big|_A\big)^{-\frac{Mp}{2}}\dx\\
&\leq 
C_N't_j^{\nu p/2}\sum_{\ell\in\bZ^d}\int_{U(j,\ell)}  |W^{1/p}(x)\vc_{j,\ell}|^p\dx\\
&\asymp\bigg\| \sum_\ell |U(j,\ell)|^{-1/2} \vc_{j,\ell} \mathbf{1}_{U(j,\ell)}\bigg\|_{L^p(W)}^p,
\end{align*}
where we have used \eqref{eq:U_kl}. 
We may now conclude that
\begin{align}
      \bigg\|\sum_{(k,\ell)\in F}\vc_{k,\ell}\omega_{k,\ell}\bigg\|_{B_{p,q}^{s}(A,W)}&\leq C
     \bigg\|\bigg\{  \sum_{j\in N(k)}\bigg\|t_j^s \sum_\ell\vc_{j,\ell}\omega_{j,\ell}\bigg\|_{L^p(W)}\bigg\}_k\bigg\|_{\ell_q}\nonumber\\
     &\leq 
 C  \bigg\|\bigg\{  \sum_{j\in N(k)}t_j^s\bigg\| \sum_\ell |U(j,\ell)|^{-1/2} \vc_{k,\ell} \mathbf{1}_{U(j,\ell)}\bigg\|_{L^p(W)}\bigg\}_k\bigg\|_{\ell_q}\nonumber\\  
   &\leq 
 C  \bigg\|\bigg\{  \sum_{k}t_k^s\bigg\| \sum_\ell |U(k,\ell)|^{-1/2} \vc_{k,\ell} \mathbf{1}_{U(k,\ell)}\bigg\|_{L^p(W)}\bigg\}_k\bigg\|_{\ell_q}\nonumber\\
 &\asymp \|\{\vc_{k,\ell}\}\|_{b_{p,q}^{s}(A,W)},\label{eq:LW}
 \end{align}
 where the uniform bound on the cardinality of $N(k)$ was used. This concludes the proof.
 \end{proof}

\begin{remark}\label{rem:dense}
Since $\{\omega_{k,\ell}\}_{k,\ell}\subset \mathcal{S}(\bR^d)$, it follows easily from Proposition  \ref{prop:coef} and Proposition \ref{prop:recon}, by standard arguments, that $\bigoplus_{j=1}^N\dS$ is dense in $B_{p,q}^{s}(A,W)$ whenever $0< p<\infty$, $0<q<\infty$, and $W\in\mathbf{A}_p(\cB_A)$.
\end{remark}

The following technical lemma was used in the proof of Proposition \ref{prop:recon}.
\begin{lemma}\label{le:sq}
Let $w:\bR^d \rightarrow (0,\infty)$ be a function satisfying the following doubling estimate, for $\lambda\geq 1$,
  \begin{equation*}\label{eq:doub3}
        \int_{B_A(x,\lambda r)}w(t)\dt\leq c\lambda^\beta \int_{B_A(x,r)} w(t)\dt,\qquad x\in\bR^d,r>0,
    \end{equation*}
 for some doubling exponent $\beta>0$. Let $j\in\bN$, $\ell\in\bZ^d$ and let the quantities $U(j,\ell)$, $t_j$, $x_{j,\ell}$ be defined as in Equations\ \eqref{eq:uk} and \eqref{eq:dec_eta}. Then for $L>\beta$, we have 
$$\int_{\bR^d}  w(x)\big(1+t_j\big|x-x_{j,\ell}\big|_A\big)^{-L}\dx \leq C\int_{U(j,\ell)}  w(x)\, \dx.$$
\end{lemma}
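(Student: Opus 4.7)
The plan is to exploit a dyadic annular decomposition centered at $x_{j,\ell}$ together with the doubling hypothesis, which is exactly the standard device for turning a polynomial tail estimate into an integral bound against a doubling measure.

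Concretely, I would set $r := t_j^{-1} r_0$ so that $U(j,\ell) = B_A(x_{j,\ell}, r)$, and split $\bR^d$ as the disjoint union of $U(j,\ell)$ and the dyadic annuli
\[
A_k := B_A(x_{j,\ell}, 2^{k+1} r) \setminus B_A(x_{j,\ell}, 2^k r), \qquad k \geq 0.
\]
On $U(j,\ell)$ the weight $(1 + t_j |x - x_{j,\ell}|_A)^{-L}$ is bounded by $1$, contributing exactly $\int_{U(j,\ell)} w(x)\dx$ (up to a constant). On $A_k$, since $|x - x_{j,\ell}|_A \geq 2^k r = 2^k t_j^{-1} r_0$, we have $t_j |x - x_{j,\ell}|_A \geq 2^k r_0$, so
\[
(1 + t_j |x - x_{j,\ell}|_A)^{-L} \leq C_{r_0} \, 2^{-kL}.
\]
Meanwhile $A_k \subset B_A(x_{j,\ell}, 2^{k+1} r)$, and the doubling hypothesis applied with $\lambda = 2^{k+1}$ gives
\[
\int_{B_A(x_{j,\ell}, 2^{k+1} r)} w(t)\dt \leq c \, 2^{(k+1)\beta} \int_{B_A(x_{j,\ell}, r)} w(t)\dt = c \, 2^{(k+1)\beta} \int_{U(j,\ell)} w(t)\dt.
\]

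Combining these two estimates on each annulus and summing over $k \geq 0$,
\[
\int_{\bR^d} w(x)(1 + t_j|x - x_{j,\ell}|_A)^{-L}\dx \leq \left(1 + C_{r_0} c \, 2^\beta \sum_{k \geq 0} 2^{k(\beta - L)}\right) \int_{U(j,\ell)} w(x)\dx.
\]
The geometric series converges precisely because of the hypothesis $L > \beta$, yielding the claimed constant $C$, which depends only on $c$, $\beta$, $L$, and $r_0$. There is no real obstacle here; the only mildly delicate point is ensuring that the decomposition into $U(j,\ell)$ and the annuli $\{A_k\}_{k\geq 0}$ indeed partitions $\bR^d$ up to a set of measure zero, which is immediate from the definition of the balls $B_A$.
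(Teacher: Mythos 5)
Your proof is correct and follows essentially the same approach as the paper: a dyadic annular decomposition centered at $x_{j,\ell}$ at scale $t_j^{-1}r_0$, combined with the polynomial decay of the bracket factor on each annulus and the doubling hypothesis to compare the mass on each annulus with the mass on $U(j,\ell)$, yielding a geometric series that converges when $L>\beta$.
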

\begin{proof}
We make a partition $\bR^d=\bigcup_{m=0}^\infty R_m$, where $R_0=U(j,\ell)=B_A(x_{j,\ell},t_j^{-1}r_0)$ 
and the  ``annuli'' $R_m$, $m\geq 1$, is defined by
$$R_m:=\big\{y\in\bR^d: 2^{m-1}t_j^{-1}r_0\leq |y-x_{j,\ell}|_A <  2^{m}t_j^{-1}r_0\big\}.$$
Then 
\begin{align*}
 \int_{\bR^d}  w(x)\big(1+t_j\big|x-x_{j,\ell}\big|_A\big)^{-L}\dx&=
\sum_{m=0}^\infty \int_{R_m}  w(x) \big(1+t_j\big|x-x_{j,\ell}\big|_A\big)^{-L}\dx\\
&\leq C \sum_{m=0}^\infty 2^{-mL}\int_{R_m}  w(x)\, \dx\\
\end{align*}
However, by the doubling property of $w(x)$, noting that $R_m\subseteq \{y: |y-x_{j,\ell}|_A < 2^m t_j^{-1}r_0\}$, we have
$$\int_{R_m}  w(x)\dx \leq c'2^{m\beta}\int_{R_0}  w(x)\, \dx,$$
so
$$\int_{\bR^d}  w(x)\big(1+t_j\big|x-x_{j,\ell}\big|_A\big)^{-L}\dx \leq C \sum_{m=0}^\infty  2^{(\beta -L)m}\int_{R_0}  w(x)\, \dx\leq C'\int_{R_0}  w(x)\, \dx,$$
provided that $L>\beta$.
\end{proof}

\begin{remark}
As a consequence of the precise doubling condition in \eqref{eq:precise_doubling}, we may use $\beta=\max\{\nu,\nu p\}$ as doubling exponent in Lemma \ref{le:sq} for $w_x:=|W^{1/p}(\cdot)x|^p$ whenever we have $W\in \mathbf{A}_p(\cB_A)$, $0<p<\infty$.    
\end{remark}

We conclude the paper with the following corollary to Propositions \ref{prop:coef} and \ref{prop:recon}.

\begin{corollary}
    Let  $0< p<\infty$ and let $0<q<\infty$. Suppose $W\in \mathbf{A}_p(\cB_A)$ and let $\Omega=\{\omega_{k,\ell}\}_{k,\ell}$ be the system defined in \eqref{eq:ltf}. Then the coefficient operator $C$ given by \eqref{eq:coefficient} and
reconstruction operator $T$ given by \eqref{eq:reconstruct}
both extend to bounded operators and makes $B^s_{p,q}(A,W)$ a retract of
$b^s_{p,q}(A,W)$, i.e.,
$TC=\operatorname{Id}_{B^s_{p,q}(A,W)}$. Moreover, we have the norm equivalence
$$\|\vf\|_{B^s_{p,q}(A,W)}\asymp \|C\vf\|_{b^s_{p,q}(A,W)},\qquad \vf\in B^s_{p,q}(A,W).$$
\end{corollary}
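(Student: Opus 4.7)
The plan is to assemble the corollary from Propositions \ref{prop:coef} and \ref{prop:recon} together with the tight frame identity \eqref{eq:frame} and the density statement in Remark \ref{rem:dense}.

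First, Proposition \ref{prop:coef} immediately gives that the coefficient operator $C\colon B^s_{p,q}(A,W)\to b^s_{p,q}(A,W)$ defined in \eqref{eq:coefficient} is bounded, with norm controlled by $[W]_{\mathbf{A}_p(\cB_A)}$. For the synthesis side, Proposition \ref{prop:recon} gives the bound $\|T\vc\|_{B^s_{p,q}(A,W)}\lesssim \|\vc\|_{b^s_{p,q}(A,W)}$ for any finitely supported $\vc$. Because finitely supported sequences are dense in $b^s_{p,q}(A,W)$ for $q<\infty$, the operator $T$ extends uniquely by continuity to a bounded operator $R\colon b^s_{p,q}(A,W)\to B^s_{p,q}(A,W)$. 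The only subtlety here is to check that this extension is well-defined as a map into $\bigoplus_{j=1}^N\mathcal{S}'(\bR^d)$ independently of the approximating finite sequences used; this is handled exactly by the general machinery referenced in Remark \ref{rem:T} (see \cite[Section 3]{GeorJohnNiel:2017a}), which applies verbatim to the present covering/BAPU set-up.

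The main point to verify is the retract identity $RC=\operatorname{Id}_{B^s_{p,q}(A,W)}$. The system $\Omega=\{\omega_{k,\ell}\}$ is a Parseval frame for $L^2(\bR^d)$ by \eqref{eq:frame}, so for any $\phi\in\mathcal{S}(\bR^d)$ we have the unconditional reconstruction $\phi=\sum_{k,\ell}\langle\phi,\omega_{k,\ell}\rangle\omega_{k,\ell}$ in $L^2$, and the same holds componentwise for any $\vf\in\bigoplus_{j=1}^N\mathcal{S}(\bR^d)$. Truncating this expansion to a finite set of indices $F_n\uparrow\bN\times\bZ^d$, Proposition \ref{prop:recon} applied to the tail $\vc^{(n)}:=\{\langle\vf,\omega_{k,\ell}\rangle\}_{(k,\ell)\notin F_n}$ shows that the partial sums converge to $\vf$ in $B^s_{p,q}(A,W)$; indeed, by Proposition \ref{prop:coef}, $C\vf\in b^s_{p,q}(A,W)$, and for $q<\infty$ the $b^s_{p,q}$-tail of any sequence in $b^s_{p,q}$ vanishes. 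Hence $RC\vf=\vf$ for $\vf\in\bigoplus_{j=1}^N\mathcal{S}(\bR^d)$. By Remark \ref{rem:dense}, the Schwartz class is dense in $B^s_{p,q}(A,W)$ for $0<p<\infty$ and $0<q<\infty$, and since both $C$ and $R$ are bounded, the identity $RC=\operatorname{Id}$ extends by continuity to all of $B^s_{p,q}(A,W)$.

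Finally, combining $\|\vf\|_{B^s_{p,q}(A,W)}=\|RC\vf\|_{B^s_{p,q}(A,W)}\leq \|R\|\,\|C\vf\|_{b^s_{p,q}(A,W)}$ with the reverse inequality $\|C\vf\|_{b^s_{p,q}(A,W)}\leq \|C\|\,\|\vf\|_{B^s_{p,q}(A,W)}$ from Proposition \ref{prop:coef} yields the stated norm equivalence. The main obstacle is really only the bookkeeping step of showing that the abstract extension $R$ of $T$ agrees, as a map into tempered distributions, with the pointwise/distributional reconstruction used in the frame identity; as indicated above, this is precisely what the general framework cited in Remark \ref{rem:T} provides, so the argument closes.
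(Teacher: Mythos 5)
Your proposal is correct and follows essentially the same route as the paper's proof: boundedness of $C$ and $R$ come from Propositions \ref{prop:coef} and \ref{prop:recon} (with the extension of $T$ to $R$ delegated to the machinery of Remark \ref{rem:T}), the identity $RC=\operatorname{Id}$ is obtained from the tight-frame reconstruction \eqref{eq:frame} on Schwartz functions, and one passes to all of $B^s_{p,q}(A,W)$ by the density in Remark \ref{rem:dense}. Your write-up is simply a bit more explicit than the paper's (which compresses the final step into ``a simple extension argument''), in particular in spelling out that the tail sums vanish in $b^s_{p,q}$-norm for $q<\infty$ and in deriving the norm equivalence as a formal consequence of $RC=\operatorname{Id}$ and the two boundedness bounds.
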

\begin{proof}
 Considering the results from Propositions \ref{prop:coef} and \ref{prop:recon}, all that remains is to address the claim that $TC=\operatorname{Id}_{B^s_{p,q}(A,W)}$. For this, we simply use the observation that $TC=\operatorname{Id}_{L^2(\bR^d)}$ since $\Omega$ is a tight frame, cf.\ \eqref{eq:frame}. As noted in Remark \ref{rem:dense}, $\dS\subset L^2(\bR^d)$ is dense in $B^s_{p,q}(A,W)$ so a simple extension argument then proves the claim.
\end{proof}

\appendix
\section{Completeness of Anisotropic matrix-weighted Besov spaces}\label{app:A}
Here we complete the proof of Proposition \ref{prop:complete}.
We recall that $\bigoplus_{j=1}^N \dS$ denotes the family of vector functions $\vf=(f_1,\ldots,f_N)^\top$ with $f_i\in\dS$, $i=1,\ldots,N$. We equip the space with the induced semi-norms
$$p_m(\vf):=\sum_{j=1}^N p_m(f_j),\qquad\text{with } p_m(f_i):=\sup_{\xi\in\bR^d}  \brac{\xi}_A^d\sum_{|\eta|\leq d} |\partial^\eta\hat{f}_i(\xi)|.$$
It follows from the growth estimate in Equation~\eqref{natasha} that the semi-norms define the usual topology on $\bigoplus_{j=1}^N \dS$.
As before, we let $\bigoplus_{j=1}^N \mathcal{S}'(\bR^d)$ denote the corresponding family of vector-valued tempered distributions.

We shall use the following elementary consequence of the scalar doubling estimate already used in the proof of Lemma~\ref{le:help2}. Let $0<p\leq 1$ and $W\in \mathbf{A}_p(\cB_A)$. By Lemma~\ref{le:sc}, the scalar weights
\[
    w_v(x):=|W^{1/p}(x)v|^p, \qquad v\in\bC^N,
\]
belong to $A_1(\cB_A)$ uniformly for $|v|=1$. In particular, using the weighted doubling estimate \eqref{eq:doubb2}, one may compare $U(k,\ell)$ with a fixed reference ball by means of the quasi-triangle inequality. Since
\[
    \inf_{|v|=1}\int_{B_A(0,r_0)} |W^{1/p}(x)v|^p\,\d x>0,
\]
by compactness of the unit sphere and the positive definiteness of $W$ a.e., the same comparison argument leading to \eqref{eq:doub} gives constants $C>0$ and $M_0>0$ such that
\begin{equation}\label{eq:scalar-lower-average}
    |v|\leq C t_k^{M_0}(1+|x_{k,\ell}|_A)^{M_0}|U(k,\ell)|^{-1/p}
    \left(\int_{U(k,\ell)} |W^{1/p}(x)v|^p\,\d x\right)^{1/p}
\end{equation}
for all $k\in\bN$, $\ell\in\bZ^d$, and $v\in\bC^N$. Indeed, by homogeneity it suffices to consider $|v|=1$, and the constants are uniform because the $A_1(\cB_A)$-bounds of $w_v$ are uniform in $v$.

We shall also use the following standard decay estimate for the frame coefficients of a Schwartz function. Since $\boldsymbol{\theta}$ is Schwartz and the system $\{\omega_{k,\ell}\}_{k,\ell}$ is uniformly localized, integration by parts gives, for every $L>0$,
\begin{equation}\label{eq:theta-frame-coefficients-decay}
\left|
\langle \boldsymbol{\theta},\omega_{k,\ell}\rangle
\right|
\leq C_L p_L(\boldsymbol{\theta})
t_k^{-L}(1+|x_{k,\ell}|_A)^{-L}.
\end{equation}
Consequently, choosing $L$ sufficiently large and using $\sum_k t_k^{-\nu-\epsilon}<\infty$ for every $\epsilon>0$, as follows from \eqref{eq:integral}, we have
\begin{equation}\label{eq:theta-weighted-summability}
\sum_{k\in\bN} t_k^{\nu(1/p-1/2)-s}
\sup_{\ell\in\bZ^d}
\left[t_k^{M_0}(1+|x_{k,\ell}|_A)^{M_0}
\left|\langle \boldsymbol{\theta},\omega_{k,\ell}\rangle\right|\right]
\leq C p_M(\boldsymbol{\theta})
\end{equation}
for a suitable Schwartz seminorm $p_M$.

\begin{proof}[Completion of the proof of Proposition \ref{prop:complete}]
We consider the proof of claim (i) and first show the embedding $\bigoplus_{j=1}^N\dS\hookrightarrow B^{s}_{p,q}(A,W)$. Let $\vf\in \bigoplus_{j=1}^N\dS$, and put  $w(x):=\|W^{1/p}(x)\|^p$, which by Lemma \ref{le:do} is in the scalar $A_{\max\{1,p\}}(\bR^d)$. We will need the fact that a scalar $A_p$-weight has moderate average growth in the sense that
\begin{equation}\label{eq:gr}
    \int_{\bR^d} w(x) \brac{x}_A^{-(\nu \max\{1,p\}+\epsilon)}\,dx<+\infty,
    \end{equation}
   for any $\epsilon>0$, which follows from the observation in  \eqref{eq:doubb2} and Lemma \ref{le:sq}.

 We have, for $m> \nu\max\{1,p\}$,

\begin{align}
\int_{\bR^d} |W^{1/p}(x)	\phi_k(D)\vf(x)|^p\dx&\leq 
\int_{\bR^d} \|W^{1/p}(x)\|^p|	\phi_k(D)\vf(x)|^p\dx\nonumber\\
&= \int_{\bR^d} w(x)|	\phi_k(D)\vf(x)|^p\dx\nonumber\\
&\leq  \|\brac{\cdot}_A^{m}	\phi_k(D)\vf\|_\infty^p\int_{\bR^d} w(x)\brac{x}_A^{-m}\dx\nonumber\\
&\leq  C\|\brac{\cdot}_A^{m}	\phi_k(D)\vf\|_\infty^p,\label{eq:wp}
\end{align}
where we used \eqref{eq:gr}.  For the scalar function $f_i\in \dS$ it can be shown (see, e.g., \cite[Appendix A]{BoruNiel:2008a}) that for $m>0$, $r>0$, and $N>\max\{m,(\nu+1)/\alpha_2+r\} $, 
\begin{equation}\label{eq:thet}
\|\brac{\cdot}_A^m	\phi_k(D)f_i\|_\infty\leq ct_k^{-r}p_N(f_i),
\end{equation}
with $c$ independent of $f_i$. 
Hence, by \eqref{eq:wp}, we obtain
$$\|	\phi_k(D)\vf\|_{L^p(W)}\leq c't_k^{-r}p_N(\vf).$$ Recall that we may choose $r$ arbitrarily large by increasing $N$, and for sufficiently large $r$, we obtain
$$\|\vf\|_{B^{s}_{p,q}(A,W)}=\|\{t_k^s\|	\phi_k(D)\vf\|_{L^p(W)}\}_k\|_{\ell^q}\leq c'p_N(\vf),$$
 for $c'$ independent of $\vf$, where we used that $\sum_{k} t_k^{-\nu-\epsilon}<\infty$ for $\epsilon>0$, which is an easy consequence of \eqref{eq:integral}. This provides the wanted embedding.
 
 We now turn to the embedding $B^{s}_{p,q}(A,W)\hookrightarrow \bigoplus_{j=1}^N\mathcal{S}'(\bR^d)$. Let us first consider the case $1<p<\infty$.
  Take $\vf\in B^{s}_{p,q}(A,W)$ and let $\boldsymbol{\theta}=(\theta_1,\ldots,\theta_N)^\top\in  \bigoplus_{j=1}^N\dS$. Then, using the smooth resolution of the identity $\sum_k \psi_k^2(\xi)=1$ from Equation~\eqref{eq:bapusq}, we obtain
 \begin{align*}
     \langle \vf,\boldsymbol{\theta}\rangle_{\bC^N}
     &=\sum_{k\in\bN}  \langle \psi_k(D)\vf,\psi_k(D)\boldsymbol{\theta}\rangle_{\bC^N}\\
     &=\sum_{k\in\bN}  \langle t_k^{s}W^{1/p}\psi_k(D)\vf,t_k^{-s}W^{-1/p}\psi_k(D)\boldsymbol{\theta}\rangle_{\bC^N}.
 \end{align*}
 In the case $1\leq q\leq \infty$, we use H\"older's inequality twice to obtain
 $$\int_{\bR^d} |\langle \vf(x),\boldsymbol{\theta}(x)\rangle_{\bC^N}|\dx
 \leq
 \|\{t_k^s\|\psi_k(D)\vf\|_{L^p(W)}\}_k\|_{\ell^q}
 \|\{t_k^{-s}\|\psi_k(D)\boldsymbol{\theta}\|_{L^{p'}(W^{-p'/p})}\}_k\|_{\ell^{q'}},
 $$
 with $q'$ the dual H\"older exponent to $q$.
 It holds that $W^{-p'/p}\in \mathbf{A}_{p'}$, cf.\ Remark \ref{re:dual}. Hence, we may use the embedding already obtained to conclude that, with $N$ suitably large, $$\|\{t_k^{-s}\|\phi_k(D)\boldsymbol{\theta}\|_{L^{p'}(W^{-p'/p})}\}_k\|_{\ell^{q'}}\leq c p_N(\boldsymbol{\theta}),$$ so
 $$\int_{\bR^d} |\langle \vf(x),\boldsymbol{\theta}(x)\rangle_{\bC^N}|\dx
 \leq c\|\vf\|_{B^{s}_{p,q}(A,W)}p_N(\boldsymbol{\theta}),$$
 which proves the continuous embedding $B^{s}_{p,q}(A,W)\hookrightarrow \bigoplus_{j=1}^N\mathcal{S}'(\bR^d)$. For $0<q<1$, we use the embedding estimate $\|\cdot\|_{{B^s_{p,\infty}(A,W)}}\leq \|\cdot\|_{{B^s_{p,q}(A,W)}}$, which follows directly from the inclusion properties of the $\ell^q$-spaces. Then we proceed using the estimate
 $$\int_{\bR^d}|\langle \vf,\boldsymbol{\theta}\rangle_{\bC^N}|\,\dx  \leq \|\{t_k^s\|\psi_k(D)\vf\|_{L^p(W)}\}_k\|_{\ell^\infty}
 \|\{t_k^{-s}\|\psi_k(D)\boldsymbol{\theta}\|_{L^{p'}(W^{-p'/p})}\}_k\|_{\ell^{1}}.$$
 The preceding estimate also proves the assertion for $0<q<1$, since the second factor is bounded by the embedding already obtained with the $\ell^1$-norm in the sequence variable.

 It remains to consider the case $0<p\leq 1$, where we estimate the distributional pairing directly by means of the frame coefficients and the scalar lower-average estimate \eqref{eq:scalar-lower-average}. Let
\[
\vc_{k,\ell}:=\langle \vf,\omega_{k,\ell}\rangle,
\qquad
\vd_{k,\ell}:=\langle \boldsymbol{\theta},\omega_{k,\ell}\rangle .
\]
The tight-frame expansion of $\boldsymbol{\theta}$  also converges in
$\bigoplus_{j=1}^N\mathcal S(\mathbb R^d)$: indeed, the rapid decay of
the coefficients $\vd_{k,\ell}=\langle\boldsymbol{\theta},\omega_{k,\ell}\rangle$,
together with the uniform Schwartz estimates for the frame elements
$\omega_{k,\ell}$, gives unconditional convergence in every Schwartz
seminorm; tightness then identifies the limit with $\boldsymbol{\theta}$.
 Hence, we obtain
\[
\langle \vf,\boldsymbol{\theta}\rangle
=
\sum_{k\in\bN}\sum_{\ell\in\bZ^d}
\langle \vc_{k,\ell},\vd_{k,\ell}\rangle_{\bC^N}.
\]
By \eqref{eq:scalar-lower-average},
\[
|\vc_{k,\ell}|\leq
C t_k^{M_0}(1+|x_{k,\ell}|_A)^{M_0}|U(k,\ell)|^{-1/p}
\bigg(\int_{U(k,\ell)} |W^{1/p}(x)\vc_{k,\ell}|^p\,\d x\bigg)^{1/p}.
\]
Hence, using $0<p\leq 1$ and the elementary inequality
$\sum_\ell a_\ell\leq (\sum_\ell a_\ell^p)^{1/p}$ for non-negative sequences, we obtain for each fixed $k$,
\begin{align*}
\sum_{\ell\in\bZ^d}
\big|\langle \vc_{k,\ell},\vd_{k,\ell}\rangle_{\bC^N}\big|
&\leq
C\sup_{\ell\in\bZ^d}
\bigg[t_k^{M_0}(1+|x_{k,\ell}|_A)^{M_0}|\vd_{k,\ell}|\bigg] \\
&\quad \times
\bigg(
\sum_{\ell\in\bZ^d}|U(k,\ell)|^{-1}
\int_{U(k,\ell)} |W^{1/p}(x)\vc_{k,\ell}|^p\,\d x
\bigg)^{1/p}.
\end{align*}
By the definition of the coefficient quasi-norm and the estimate $|U(k,\ell)|\asymp t_k^{-\nu}$,
\[
\bigg(
\sum_{\ell\in\bZ^d}|U(k,\ell)|^{-1}
\int_{U(k,\ell)} |W^{1/p}(x)\vc_{k,\ell}|^p\,\d x
\bigg)^{1/p}
\lesssim
t_k^{\nu(1/p-1/2)-s}
\|\{\vc_{k,\ell}\}_{k,\ell}\|_{b^s_{p,\infty}(A,W)} .
\]
Consequently,
\begin{align*}
|\langle \vf,\boldsymbol{\theta}\rangle|
&\lesssim
\|\{\vc_{k,\ell}\}_{k,\ell}\|_{b^s_{p,\infty}(A,W)}
\sum_{k\in\bN}t_k^{\nu(1/p-1/2)-s}
\sup_{\ell\in\bZ^d}
\big[t_k^{M_0}(1+|x_{k,\ell}|_A)^{M_0}
\big|\langle \boldsymbol{\theta},\omega_{k,\ell}\rangle\big|\big].
\end{align*}
By Proposition \ref{prop:coef}, applied with $q=\infty$, and the embedding
$B^s_{p,q}(A,W)\hookrightarrow B^s_{p,\infty}(A,W)$, we obtain
\[
\|\{\vc_{k,\ell}\}_{k,\ell}\|_{b^s_{p,\infty}(A,W)}
\lesssim
\|\vf\|_{B^s_{p,\infty}(A,W)}
\leq
\|\vf\|_{B^s_{p,q}(A,W)} .
\]
The remaining factor is bounded by \eqref{eq:theta-weighted-summability}. Therefore, for a sufficiently large Schwartz seminorm $p_M$,
\[
|\langle \vf,\boldsymbol{\theta}\rangle|
\leq
C\|\vf\|_{B^s_{p,q}(A,W)}p_M(\boldsymbol{\theta}).
\]
This proves the continuous embedding
\[
B^s_{p,q}(A,W)\hookrightarrow
\bigoplus_{j=1}^N\mathcal{S}'(\bR^d)
\]
in the case $0<p\leq 1$, and completes the proof of part (i).

We turn to the proof of part (ii). Suppose that $\{\vf_n\}_n$ is a Cauchy sequence in $B^{s}_{p,q}(A,W)$. The sequence $\{\vf_n\}_n$ is also Cauchy in the complete space $\bigoplus_{j=1}^N\mathcal{S}'(\bR^d)$ by part (i). Hence, the sequence has a well-defined limit $\vf\in \bigoplus_{j=1}^N\mathcal{S}'(\bR^d)$. Recall that we have $\phi_j\in\dS$ so it follows that $\lim_{m\rightarrow \infty}[\phi_j(D)\vf_m](x)= [\phi_j(D)\vf](x)$ pointwise for $x\in\bR^d$.
Hence, by an iterated application of Fatou's lemma,
\begin{align*}
    \|\vf-\vf_n\|_{B^{s}_{p,q}(A,W)}&\asymp
    \bigg(\sum_{j=1}^\infty t_j^{sq}\|\phi_j(D)(\vf-\vf_n)\|_{L^p(W)}^q\bigg)^{1/q}\\
    &\leq \bigg(\sum_{j=1}^\infty t_j^{sq}\liminf_{m\rightarrow \infty}\|\phi_j(D)(\vf_m-\vf_n)\|_{L^p(W)}^q\bigg)^{1/q}\\
    &\leq \liminf_{m\rightarrow \infty}\bigg(\sum_{j=1}^\infty t_j^{sq}\|\phi_j(D)(\vf_m-\vf_n)\|_{L^p(W)}^q\bigg)^{1/q}\\
    &\asymp \liminf_{m\rightarrow \infty} \|\vf_m- \vf_n\|_{B^{s}_{p,q}(A,W)}<+\infty.
\end{align*}
 We first deduce from the estimate that $\vf=(\vf-\vf_n)+\vf_n\in B^{s}_{p,q}(A,W)$. Moreover,  
it also follows from the same estimate that 
$\vf_n\rightarrow \vf$ in $B^{s}_{p,q}(A,W)$ by using that
$\{\vf_n\}_n$ is a Cauchy sequence in $B^{s}_{p,q}(A,W)$. This proves completeness of $B^{s}_{p,q}(A,W)$ and concludes the proof.
\end{proof}

%\bibliographystyle{abbrv}
%\bibliography{new_pdo}
\end{document}